\titleformat{\section}[block]{\Large\bfseries}{\arabic{section}}{0.7em}{}[]
\titleformat{\subsection}[block]{\large\bf}{\arabic{section}.\arabic{subsection}}{0.5em}{}[]
\titleformat{\subsubsection}[runin]{\small\bfseries}{\arabic{subsection}-\alph{subsubsection}}{0.5emem}{}[]
\theoremstyle{definition}
\newtheorem{defi}{Definition}[section]
\newtheorem{ex}[defi]{Example}
\theoremstyle{plain}
\newtheorem{theo}[defi]{Theorem}
\newtheorem{lemma}[defi]{Lemma}
\newtheorem{prop}[defi]{Proposition}
\newtheorem{cor}[defi]{Corollary}
\newtheorem{conj}{Conjecture}
\theoremstyle{remark}
\newtheorem*{rmk}{Remark}
\newcommand{\inte}[1]{\mathcal{O}_{#1}}
\newcommand{\Bdrp}[1]{\mathbb{B}_{\mathrm{dR}}^+(#1)}
\newcommand{\Bpp}[1]{\mathbb{B}_{#1}}
\newcommand{\Bp}[1]{B_{#1}}
\newcommand{\act}[2]{\ ^{[#1]}#2}
\newcommand{\cyc}{\mathrm{cyc}}
\newcommand{\mic}{\mathrm{mic}}
\newcommand{\bB}{\mathbb{B}}
\newcommand{\bL}{\mathbb{L}}
\newcommand{\bQ}{\mathbb{Q}}
\newcommand{\calO}{\mathcal{O}}
\newcommand{\calX}{\mathcal{X}}
\newcommand{\rf}{\mathrm{f}}
\newcommand{\rg}{\mathrm{g}}
\def\Mat{{\mathrm{Mat}}}
\def\B{\mathbb{B}_{\mathrm{dR}}^+}
\def\bdrp{{B_{\mathrm{dR}}^+}}
\def\ep{\epsilon}
\def\dR{\mathrm{dR}}
\def\vpi{\varpi}
\def\pet{v}
\def\et{\textrm{\'et}}
\def\Q{\mathbb{Q}}
\def\m{\mathfrak{m}}
\def\obdr{\mathcal{O}\mathbb{B}_{\mathrm{dR}}}
\def\T{\mathbb{T}}
\def\X{\mathbb{X}}
\def\Spa{\mathrm{Spa}}
\def\Z{\mathbb{Z}}
\def\Hom{\mathrm{Hom}}
\def\End{\mathrm{End}}
\def\M{\mathbf{M}}
\def\Perf{\mathcal{P}er\! f}
\def\Ainf{\mathbb{A}_{\mathrm{inf}}}
\def\ainf{A_{\mathrm{inf}}}
\def\Rep{\mathcal{R}{e p}}
\def\bk{\mathbf{k}}
\title{On smooth adic spaces over $\B$ and sheafified $p$-adic Riemann--Hilbert correspondence}
\author{Jiahong Yu\thanks{Academy of Mathematics and Systems Science, Chinese Academy of Sciences} \thanks{Morningside Center of Mathematics, CAS}}
\renewcommand{\@maketitle}{
\newpage
 \null
 \vskip 2em%
 \centering
  {\large \bfseries{\MakeUppercase{\@title}} \par}%
 \par
 \vspace{0.5cm}
 \centering{\@author\par}
 } 
\begin{document}

\pagestyle{fancy}
\fancyhf{}
\fancyhead[L]{\textsl\leftmark}
\fancyfoot[C]{\thepage}
\renewcommand{\headrulewidth}{0.2pt}
\renewcommand{\footrulewidth}{0.2pt}

\maketitle

\abstract{Let $C$ be a completely algebraic closed non-archimedean field over $\mathbb{Q}_p$ and $\alpha,r$ be two positive integers. Denote by $B_\alpha$ the ring $\mathbb{B}_{\mathrm{dR}}^+(C)/(\ker\theta)^\alpha$. This paper first constructs a sheafified $p$-adic Riemann--Hilbert correspondence. Specifically, we construct a canonical sheaf isomorphism on $X_{\mathrm{\acute{e}t}}$,
\[
R^1\nu_*\big( \mathrm{GL}_r(\mathbb{B}_{\mathrm{dR}}^+/(\ker\theta)^\alpha) \big) \cong \mathrm{MIC}_{r}(X)\{-1\},
\]
where the first term is identified with the sheaf of isomorphism classes of $v$-vector bundles with coefficients in $\mathbb{B}_{\mathrm{dR}}^+/(\ker\theta)^{\alpha}$, and the second term is defined as the sheaf of isomorphism classes of integrable connections of rank $r$.

We then define the moduli space of integrable connections on $X$ and the moduli space of $v$-vector bundles on $X$ with coefficients in $\mathbb{B}_{\mathrm{dR}}^+/(\ker\theta)^{\alpha}$, and prove that they are small $v$-stacks in the sense of Scholze.

These constructions generalize Heuer's work on $p$-adic Simpson correspondence.
}

\tableofcontents

\section{Introduction}

Let $X$ be a smooth analytic variety over $\mathbb{C}$. The classical complex analytic Riemann--Hilbert correspondence establishes a canonical equivalence of categories between integrable connections on $X$ and local systems with coefficients in $\mathbb{C}$. The key to its proof is the Poincar\'e's lemma, which states that for any point on $X$, there exists an open neighborhood such that every integrable connection on it is trivial.

However, in $p$-adic geometry, due to the lack of a $p$-adic analogue of the Poincar\'e's lemma, a Riemann--Hilbert correspondence has not been established. Nevertheless, in \cite{scholze2012padic}, Scholze defined the so-called de Rham period sheaf (also known as $\mathbb{B}_{\mathrm{dR}}^+$) for all smooth rigid analytic varieties over $\mathbb{Q}_p$ and demonstrated that it admits an analogue of the Poincar\'e's lemma. Furthermore, in \cite{Liu_2016}, Ruochuan Liu and Xinwen Zhu defined, for each smooth analytic variety $X$ over $\mathbb{Q}_p$, a new ringed space $(\mathcal{X},\mathcal{O}_{\mathcal{X}}^+)$ and defined a Riemann--Hilbert functor for the category of nilpotent $\mathbb{B}_{\mathrm{dR}}^+$-local systems on it (for details, see Subsection \ref{subsec: liuzhu intro}). In fact, $(\mathcal{X},\mathcal{O}_{\mathcal{X}}^+)$ is a smooth adic space defined over Fontaine's period ring $\mathbb{B}_{\mathrm{dR}}(\mathbb{C}_p)^+$, along with its structure sheaf. This motivates the conjecture that for a smooth adic space $X$ defined over $\mathbb{B}_{\mathrm{dR}}(\mathbb{C}_p)^+$, there exists an equivalence of categories between $\mathbb{B}_{\mathrm{dR}}^+$-local systems on $X$ and integrable connections on $X$. We name this conjecture the geometric $p$-adic Riemann--Hilbert conjecture (see Conjecture \ref{conj:geometric riemann hilbert}). A natural consequence of this conjecture is a sheaf-theoretic $p$-adic Riemann--Hilbert correspondence (see Theorem \ref{theorem: sheafRH}). The first theme of this article is to prove this consequence, which provides indirect evidence for the geometric $p$-adic Riemann--Hilbert conjecture.

Back to the setting of the complex geometry. For a complex smooth analytic variety $X$, Simpson further defined the moduli space of $\mathbb{C}$-local systems on $X$ (also known as the Betti moduli space) and the moduli space of integrable connections on $X$. From this perspective, the classical Riemann--Hilbert correspondence can be generalized as an isomorphism between these two moduli spaces.

We hope that Conjecture \ref{conj:geometric riemann hilbert} can also be generalized to a statement about moduli spaces. Therefore, we first need to define the moduli space of $\mathbb{B}_{\mathrm{dR}}^+$-local systems and the moduli space of integrable connections on $X$ for a smooth adic space $X$ over $\mathbb{B}_{\mathrm{dR}}^+$. The second goal of this article is to define these two moduli spaces using the language of small $v$-stacks introduced by Scholze in \cite{scholze2022etale}.

\subsection{Geometric $p$-adic Riemann--Hilbert correspondence}\label{subsec: liuzhu intro}

Ruochuan Liu and Xinwen Zhu constructed an analogue of the classical complex Riemann--Hilbert functor in \cite{Liu_2016}. Specifically, let $k$ be a finite extension of $\Q_p$ and $X$ be a smooth rigid space over $k$. Let $K$ be the completion of the algebraic closure of $k$. As constructed in \cite{scholze2012padic}, there exists a pro-\'etale sheaf ${\obdr}_{,X}$ equipped with a natural differential operator
$$\mathrm{d}_{\dR}:{\obdr}_{,X}\to {\obdr}_{,X}\otimes_{\nu^{-1}\inte{X}}\nu^{-1}\Omega_{X/k},$$ 
where $\nu$ denotes the projection from the pro-\'etale topos to the \'etale topos.
Define the ringed topology 
$$(\mathcal{X},\inte{\mathcal{X}})=(X_{\text{\'et}},\nu_*{\obdr}_{,X})$$ 
and the operator $\mathrm{d}_{\dR}$ descends to a differential operator $\mathrm{d}_{\dR}:\inte{\mathcal{X}}\to \inte{\mathcal{X}}\otimes_{\inte{X}}\Omega_{X/k}$.  The structure sheaf $\inte{\mathcal{X}}$ admits a natural filtration.

In the following, we will use $\Omega_\calX$ (resp. $\Omega_{\calX^+}$) to denote the sheaf $\calO_\calX\otimes_{\calO_{X}}\Omega_{X/k}$ (resp. $\calO_\calX^+\otimes_{\calO_{X}}\Omega_{X/k}$).

For any $\Q_p$-local system $\mathbb{L}$, define $$\mathcal{RH}(\mathbb{L})=\nu_*\big({\obdr}_{,X}\otimes_{\widehat{\Q}_p}\widehat{\mathbb{L}}\big)$$ and $\nabla_{\mathbb{L}}$ be the push-forward of $\mathrm{d}_{\dR}\otimes_{\Q_p}id_{\widehat{\mathbb{L}}}$\footnote{We follow the notation in \cite{scholze2012padic} that for any local system $\bL$, $\widehat{\bL}$ is the associated pro-\'etale sheaf on $X$.}. They proved the following result (cf. \cite[Theorem 3.8]{Liu_2016}):
\begin{enumerate}[(1)]
    \item For any $\mathbb{L}$, $\mathcal{RH}(\mathbb{L})$ is a vector bundle of the rank equal to that of $\mathbb{L}$.

    \item There is a natural filtration on $\mathcal{RH}(\mathbb{L})$ which makes $\mathcal{RH}(\mathbb{L})$ a filtered module over $\inte{\mathcal{X}}$. The connection $\nabla_{\mathbb{L}}:\mathcal{RH}(\mathbb{L})\to \mathcal{RH}(\mathbb{L})\otimes_{\inte{\mathcal{X}}}\Omega_{\mathcal{X}}$ satisfies the Griffith transversality with respect to this filtration.

    \item The $0$-th graded piece of $\nabla_{\mathbb{L}}:\mathcal{RH}(\mathbb{L})\to \mathcal{RH}(\mathbb{L})\otimes_{\inte{\mathcal{X}}}\Omega_{\mathcal{X}}$ coincides with the $p$-adic Simpson correspondence of $\mathbb{L}$.
\end{enumerate}

In fact, they proved a slightly stronger result. Let $\obdr^{[0,\infty)}$ denote the $0$-th filtration of $\obdr$, and let $\inte{\mathcal{X}}^+$ be the sheaf $\nu_*\obdr^{[0,\infty)}$ on $\mathcal{X}$. The restriction of $\mathrm{d}_{\dR}$ induces an operator $\mathrm{d}_{\dR}^+:\inte{\mathcal{X}}^+\to \Omega_{\mathcal{X}^+}\{-1\}$, where $\{-1\}$ means tensoring with the module $\ker \theta^{-1}\Bdrp{K}$, see Definition \ref{definition: BKgeneral}. Their theorem can be strengthened as follows:

Let $$\mathcal{RH}^{[0,\infty)}(\mathbb{L})=\nu_*\big({\obdr}_{,X}^{[0,\infty)}\otimes_{\widehat{\Q}_p}\widehat{\mathbb{L}}\big).$$
\begin{enumerate}[(1)]
    \item For any $\mathbb{L}$, $\mathcal{RH}(\mathbb{L})^{[0,\infty)}\subseteq \mathcal{RH}(\mathbb{L})$ is a vector bundle on the ringed topology $(\mathcal{X},\inte{\mathcal{X}}^+)$ of the same rank as $\mathbb{L}$.

    \item The restriction of $\nabla_{\mathbb{L}}$ induces a connection
    $$\nabla_{\mathbb{L}}:\mathcal{RH}(\mathbb{L})^{[0,\infty)}\to \mathcal{RH}(\mathbb{L})^{[0,\infty)}\otimes_{\inte{\mathcal{X}}^+}\Omega_{\mathcal{X}^+}\{-1\}.$$
\end{enumerate}

The strengthened form implies that the $p$-adic geometric Riemann--Hilbert correspondence should be concerned as a deformation of the $p$-adic Simpson correspondence.

However, the construction in \cite{Liu_2016} highly depends on the choice of the rational structure of $X$, and the local system $\mathbb{L}$ must be defined on $X$ rather than merely defined on $X_{K}$. In analogy with the $p$-adic Simpson correspondence introduced by Faltings (for curves: see \cite{Faltings_2005}) and Heuer (general cases, see \cite{heuer2023padic}), we conjecture the following.

\begin{conj}[Geometric $p$-adic Riemann--Hilbert correspondence]\label{conj:geometric riemann hilbert}
    Let $C$ be a complete algebraically closed $p$-adic field, and let $\bdrp$ denote its de Rham period ring. Let $X/\bdrp$ be a smooth adic space and $\overline{X}=C\otimes_{\bdrp}X$. There exists a canonical rank-preserving equivalence between categories
    $$\mathcal{RH}:\mathcal{V}ect(\mathbb{B}_{\mathrm{dR}}^+)\cong\mathcal{MIC}(X),$$
    where
    \begin{itemize}
        \item $\mathcal{V}ect(\mathbb{B}_{\mathrm{dR}}^+)$ is the category of locally free pro-\'etale sheaves of $\mathbb{B}_{\mathrm{dR}}^+$-modules on $\overline{X}$;
        \item $\mathcal{MIC}(X)$ is the category of pairs 
        $$(M,\nabla:M\to M\otimes_{\inte{X}}\Omega_{X}\{-1\}),$$ where $M$ is an analytic vector bundle and $\nabla$ is an integrable connection.
    \end{itemize}
\end{conj}

\subsection{Sheafified $p$-adic Simpson correspondence and Riemann--Hilbert correspondence}

To be more explicit, we introduce the following two notations and keep them in the paper.
\begin{itemize}
    \item Let $\Q_p^\cyc$ be the completion of the extension of $\Q_p$ by adding all $p$-power roots of unity.
    \item For any adic space $X$, denote by $\nu$ the projection from the $v$-topology $X_{\pet}$ of $X$ to the \'etale topology $X_\et$ of $X$.
\end{itemize}

\begin{rmk}Here we explain the choice of topology. Unlike in the previous subsection, throughout this article, we will use the $v$-topology instead of the pro-\'etale topology. In fact, the categories of locally free sheaves with coefficients in $\mathbb{B}_{{\alpha}}$ (see Theorem \ref{theorem: sheafRH}) under these two topologies are equivalent; therefore, all instances of the $v$-topology in this paper, especially Theorems \ref{theorem: sheafRH} and \ref{theorem: modulismall}, can be replaced by the pro-\'etale topology. We use the $v$-topology because its definition is simpler. This choice leads to a potential issue: $\mathcal{O}\mathbb{B}_{\mathrm{dR}}^+$ is currently only defined on the pro-\'etale topology. However, we will not use this notion in this paper.\end{rmk}

Having fixed these conventions, we now return to the main narrative. First, we briefly recall the sheafified $p$-adic Simpson correspondence constructed by Heuer in \cite{heuer2022moduli}. He defined a new class of adic spaces — smoothoid spaces — characterized by being locally smooth over perfectoid spaces. For any smoothoid space $X$ over a perfectoid field $K/\Q_p^{\cyc}$, he established a canonical isomorphism between sheaves
$$R^1\nu_*\mathrm{GL}_r(\widehat{\mathcal{O}}_{X_\pet})\cong \mathrm{Higgs}_r(X)$$
where $\mathrm{Higgs}_r(X)$ is the \'etale sheaf associated to the presheaf of isomorphism classes of rank $r$ Higgs bundles.

\begin{rmk}
    Heuer assumed that $K$ is algebraically closed, which is unnecessary.
\end{rmk}

In light of the discussion in Subsection \ref{subsec: liuzhu intro}, the $p$-adic Riemann--Hilbert correspondence should be regarded as a deformation of the $p$-adic Simpson correspondence. This naturally leads us to ask whether the sheafified $p$-adic Simpson correspondence can be extended to a sheafified $p$-adic Riemann–Hilbert correspondence. We provide a positive answer to this question. Let $(K,K^+)$ be a perfectoid Tate--Huber pair over $\Q_p^\cyc$, $\alpha$ a positive integer and $X$ a smooth adic space over $\Bp{\alpha}=\B(K)/(\ker\theta)^\alpha$. Define $\overline{X}=X\otimes_{\Bp{\alpha},\theta}K$.

Our first theorem is the following.

\begin{theo}\label{theorem: sheafRH}
    Let $r$ be a positive integer. There is a canonical isomorphism of sheaves
    $$\mathrm{RH}_{X,r}:R^1\nu_*\big(\mathrm{GL}_r({\mathbb{B}}_{\alpha,\overline{X}})\big)\cong \mathrm{MIC}_r(X)\{-1\},$$
    where
    \begin{itemize}
        \item ${\mathbb{B}}_{\alpha,\overline{X}}$ is the $v$-sheaf $\bB^+_{\dR,\overline{X}}/(\ker\theta)^\alpha$ on $\overline{X}_{\pet}$ where $\bB^+_{\dR,\overline{X}}$ is the de Rham period sheaf in the sense of \cite[Definition 6.1]{scholze2012padic}.\footnote{Scholze assumed that $X$ is locally noetherian. However, this assumption is not necessary.}

        \item $\mathrm{MIC}_r(X)\{-1\}$ is the sheaf on $X_{\et}$ associated to the presheaf on $X_\et$ which sends each $U\in X_{\et}$ to the isomorphic classes of pairs
        $$(M,\nabla:M\to M\otimes_{\calO_U}\Omega_{U/B_\alpha}\{-1\}),$$
        where $M$ is a vector bundle of rank $r$ on $U$ and $\nabla$ is an integrable connection with respect to the differential operator $\widetilde{d}$ (See the following for more explanation). 
    \end{itemize}
    Moreover, when $\alpha=1$, this isomorphism coincides with Heuer's sheafified $p$-adic Simpson correspondence (cf. \cite[Theorem 1.2]{heuer2022moduli}). 
\end{theo}

Here, the notation $\{-1\}$ is a generalization of Breuil--Kisin twist.

\begin{defi}\label{definition: BKgeneral}
    For any sheaf of $\Bp{\alpha}$ modules $\mathcal{F}$ on any topology and an integer $k$, define: 
    $$\mathcal{F}\{k\}=\mathcal{F}\otimes_{\bB_{\dR}^+(K)}(\ker\theta)^{k},$$
    (which is isomorphic to $\mathcal{F}$). Moreover, if $k\leq l$ are integers, there is a canonical homomorphism $\mathcal{F}\{l\}\to \mathcal{F}\{k\}$ induced by the canonical inclusion $(\ker\theta)^{l}\B\to (\ker\theta)^k\B$.
\end{defi}

\begin{ex}\label{eg:BKdiff}
    Keep the notations as above, and let $\Omega_{X}$ be the sheaf of differentials of $X$ over $B_\alpha$. Then $\Omega_X\{-1\}$ is a vector bundle. The canonical homomorphism
    $$i:\Omega_X\to \Omega_{X}\{-1\}$$
    (defined as above) is $\inte{X}$-linear with kernel $(\ker\theta)^{\alpha-1}\Omega_X$ and cokernel
    $\Omega_{\overline{X}}\{-1\}$ (the usual Breuil--Kisin twist). Let $\widetilde{d}:\inte{X}\to \Omega_X\{-1\}$ be the differential operator $i\circ d$.
    The similar construction provides a complex
    $$\widetilde{\mathrm{DR}}_{X}=[\inte{X}\xrightarrow{\widetilde{d}}\Omega_X\{-1\}\xrightarrow{\widetilde{d}}\Omega_X^2\{-2\}\to\cdots].$$
\end{ex}

We emphasize that the left hand side of the isomorphism in Theorem \ref{theorem: sheafRH} only depends on $\overline{X}$. The subscript $\alpha$ does not appear explicitly in the right hand side since it is already included in the base ring of $X$.

\subsection{Moduli spaces}

In Simpson's foundational work \cite{Simpson_1994}, he established the moduli spaces for Betti local systems and de Rham local systems on complex smooth projective varieties. He proved that the classical Riemann–Hilbert correspondence induces a complex analytic isomorphism between these two moduli spaces. In the $p$-adic setting, Heuer constructs analogues of these moduli spaces within the framework of $p$-adic nonabelian Hodge theory (cf. \cite[Theorem 1.4]{heuer2022moduli}). 

In this paper, we generalize these $p$-adic analogues within the context of $p$-adic Riemann–Hilbert theory. Again, they are deformations of those moduli spaces in $p$-adic Simpson theory. We will freely use the language of small $v$-stacks introduced by Peter Scholze in \cite{scholze2022etale}.

Fix a positive integer $\alpha$. Let $C$ be a perfectoid field over $\mathbb{Q}_p^\cyc$, and let $\Perf$ be the big $v$-topology of affinoid perfectoid spaces over $C$. Define $\Bp{\alpha}=\Bpp{\alpha}(C)$, and let $X$ be a smooth adic space over $\Bp{\alpha}$. Set $\overline{X}=X\otimes_{\Bp{\alpha}}C$.

\begin{defi}
    (1) For any $\Spa(A,A^+)\in\Perf$ and a positive integer $\alpha$, let $\Bpp{\alpha}(A)^+$ be the preiamge of $A^+$ along the Fontaine's $\theta$ map. Let $\Spa_{\dR,\alpha}(A,A^+)$ be the pre-adic space 
    $$\Spa\left(\Bpp{\alpha}(A),\Bpp{\alpha}(A)^+\right);$$
    this is indeed an adic space (cf. Corollary \ref{etshperfd}).

    (2) For any $\Spa(A,A^+)\in\Perf$, let 
    $$X_A=X\times_{\Spa_{\dR,\alpha}(C,C^+)}\Spa_{\dR,\alpha}(A,A^+).$$
\end{defi}

Following Heuer's method, we define the following moduli prestacks.

\begin{defi}
    For any integer $r>0$.
    
    (1) Let $\M_{\dR,\alpha,r,X}$ be the prestack sending $\mathrm{Spa}(A,A^+)\in \Perf$ to the groupoid of pairs $(M,\nabla)$ where $M$ is a rank $r$ vector bundle on $X_A$ and 
    $$\nabla:M\to M\otimes \Omega\{-1\}$$
    is a connection with respect to $\widetilde{d}$ (see Example \ref{eg:BKdiff}).

    (2) Let $\M_{{\mathbb{B}_{\alpha}},r,X}$ be the prestack sending $\mathrm{Spa}(A,A^+)\in \Perf$ to the groupoid of rank $r$ locally free $\B/(\ker\theta)^\alpha$- sheaves on the $v$-topology of $\overline{X}_{A}$. Here, $\overline{X}=X\otimes_{B_\alpha}C$ and $\overline{X}_A$ is the usual base change.
\end{defi}

We will prove the following theorem.

\begin{theo}\label{theorem: modulismall}
    The prestacks $\M_{\dR,\alpha,r,X}$ and $\M_{{\mathbb{B}_{\alpha}},r,X}$ are small $v$-stacks.
\end{theo}

\subsection{Decompleting toric towers over a de Rham period ring}

The key technique for proving Theorems \ref{theorem: sheafRH} and \ref{theorem: modulismall} is the decompletion of smooth algebras with toric charts over the de Rham period ring. Let $(K,K^+)$ be a perfectoid Tate--Huber pair over $\Q_p^\cyc$, $\alpha$ a positive integer and $X=\Spa(A,A^+)$ a smooth affinoid adic space over $\Bp{\alpha}=\B(K)/(\ker\theta)^\alpha$.

Assume that there exists a standard \'etale homomorphism
$$f:\Bp{\alpha}\langle  T_1^{\pm1},T_2^{\pm1},\dots,T_d^{\pm1}\rangle\to A.$$
Set $\overline{A}=A\otimes_{\Bp{\alpha}}K$ and 
$$\widehat{\overline{A}}_{\infty}=K\langle  T_1^{\pm\tfrac{1}{p^\infty}},T_2^{\pm\tfrac{1}{p^\infty}},\dots,T_d^{\pm\tfrac{1}{p^\infty}}\rangle\widehat{\otimes}_{K\langle  T_1^{\pm1},T_2^{\pm1},\dots,T_d^{\pm1}\rangle}\widehat{\overline{A}}.$$
This is a perfectoid ring. Denote $\widehat{A}_{\infty}=\Bpp{\alpha}(\widehat{\overline{A}}_\infty)$. Choose a system of $p^\infty$-th unit root $\{\zeta_{p^n}:n\geq0\}\in F$ (i.e. $\zeta_{p^n}^p=\zeta_{p^{n-1}}$ and $\zeta_p$ is primitive). There is a canonical action of $\Gamma=\Z_p^d$ on $\widehat{\overline A}_\infty$. This action can be extended to $\widehat{A}_\infty$ by the functorality of $\Bpp{\alpha}(-)$.

In \cite{berger2008familles}, Berger and Colmez introduced a method describing decompletion using locally analytic vectors, which was later analogized to a geometric setting in \cite{Pan_2022}. We first generalize their computations and obtain the following result.

\begin{prop}[Proposition \ref{prop:locallyan}]
    For any $N\geq 0$, there is a canonical isomorphism of rings
    $$\widehat{A}_\infty^{p^N\Gamma-\mathrm{an}}\cong A_N:= \Bp{\alpha}\langle  T_1^{\pm\tfrac{1}{p^N}},T_2^{\pm\tfrac{1}{p^N}},\dots,T_d^{\pm\tfrac{1}{p^N}}\rangle\otimes_{\Bp{\alpha}\langle  T_1^{\pm1},T_2^{\pm1},\dots,T_d^{\pm1}\rangle}A$$
    where the left hand side is the subset of $p^N\Gamma$-analytic vectors (cf. \cite[Subsection 2.1]{Pan_2022}). Moreover, the inclusion of the $p^N\Gamma$-analytic vectors into the $p^{N+1}\Gamma$-analytic vectors corresponds to the natural inclusion $A_N\to A_{N+1}$.
\end{prop}

\begin{rmk}
    In fact, the above isomorphism identifies \( T_i^{\frac{1}{p^N}} \) with the Teichmüller lift of \[ (T_{i}^{\frac{1}{p^N}}, T_{i}^{\frac{1}{p^{N+1}}}, T_i^{\frac{1}{p^{N+2}}}, \dots) \in \widehat{\overline A}_{\infty}^\flat .\]
\end{rmk}

\begin{rmk}
    This proposition induces a locally analytic action of $\Gamma$ on $A_\infty:=\varinjlim A_N$. However, unlike the $\alpha=1$ case, this action generally \underline{\textbf{differs}} from the Galois action of $\Gamma$. This difference is the main obstruction to deforming Heuer's theorem to Theorem \ref{theorem: sheafRH}. See Section \ref{Section:SheafRH} for details.
\end{rmk}

The decompletion theory can be described as the following theorem.

\begin{theo}[Theorem \ref{Sen}]
    There is a canonical rank preserving equivalence between categories
    $${\Rep}_{\Gamma}^{\mathrm{cnt}}(\widehat{A}_{\infty})\cong{\Rep}_{\Gamma}^{\mathrm{la}}(A_{\infty})$$
    where
    \begin{itemize}
        \item ${\Rep}_{\Gamma}^{\mathrm{cnt}}(\widehat{A}_{\infty})$ is the category of finite rank projective $\widehat{A}_\infty$-modules with continuous semi-linear $\Gamma$-actions.
        \item ${\Rep}_{\Gamma}^{\mathrm{la}}(A_{\infty})$ is the category of finite rank projective $A_\infty$-modules with locally analytic semi-linear $\Gamma$-actions.
    \end{itemize}
    Precisely, the functor from the left hand side to the right hand side is taking locally analytic vectors, and its inverse is obtained by taking completion.
\end{theo}

\subsection{Acknowledgement}

The author would like to thank Yupeng Wang and Tian Qiu for helpful discussions. We thank Liang Xiao and Yupeng Wang for carefully reading the preliminary draft and helping the author on writing.

\subsection{Notations}\label{Subsection: Notations}

\noindent (1) For any ring $A$ and elements $g,f_1,f_2,\cdots,f_k\in A$, use $$A\left[\tfrac{f_1,f_2,\cdots,f_k}{g}\right]$$ to denote the $A$-subalgebra of $A[\tfrac{1}{g}]$ generated by $\tfrac{f_1}{g},\tfrac{f_2}{g},\cdots,\tfrac{f_k}{g}$.
$$$$
\noindent (2) For any perfectoid Tate--Huber pair $(R,R^+)$, we will use $\B(R)$ to denote its De Rham period ring. We emphasis that this ring does \textbf{not} depend on the choice of $R^+$. We also use $\mathbb{B}_{\dR,X}^+$ to denote the de Rham period sheaf on $X_{v}$ for any adic space $X$ over $\Q_p$.
$$$$
\noindent (3) For any ring $R$, we will use $W(R)$ to denote it ring of Witt vectors and $W_N(R)$ to denote its ring of $N$-th truncated Witt vectors.

\section{Foundations on adic spaces}\label{Section: Foundation adic spaces}

The main goal of this section is to establish the theories of differential modules, étaleness, and smoothness for general adic spaces. We note that although these theories have been constructed in several previous works, most of them impose some special restrictions (most often requiring the base space to be strongly Noetherian or perfectoid). However, the spaces considered in this article are neither strongly Noetherian nor perfectoid; consequently, we are forced to redevelop the general theory.

\subsection{Foundations on functional analysis over Tate rings}

In this subsection, we fix a complete Tate ring $K$ and a pseudo-uniformizer $\vpi\in K$.

\begin{defi}
    Let $K_0\subseteq K$ be a definition subring containing $\varpi$. A topological $K$-module $M$ is said to be \emph{adic with respect to $K_0$} if there exists an open $K_0$-submodule $M_0\subseteq M$ for which the induced topology coincides with the $\varpi$-adic topology. Such an $M_0$ is called a \emph{definition submodule}. A topological module $M/K$ is called \textit{adic} if there exists a defining subring $K_0 \subseteq K$ such that $M$ is adic with respect to $K_0$.
\end{defi}

One of the most important property of adic modules is that the definition does not depend on the choice of the definition subring. 

\begin{lemma}\label{lem: adic independ K_0}
    Let $K_0,\ K_0'$ be two open bounded subrings of $K$ containing $\varpi$. Then any adic $K$-module with respect to $K_0$ is also an adic module with respect to $K_0'$.
\end{lemma}

\begin{proof}
    Let $M$ be an adic module over $K$ with respect to $K_0$. Choose a definition submodule $M_0$. We claim that there exists some $N$ such that the $K_0'$-submodule generated by $M$ contains in 
    $\vpi^{-N}M_0$.

    Indeed, since $K_0'$ is bounded, there exists some $N$ such that $K_0'\subseteq\vpi^{-N}K_0$. Hence, for any $a_1,a_2,\dots,a_r\in K_0'$ and $m_1,m_2\dots,m_r\in M_0$,
    \[\sum_{j=1}^r\vpi^Na_jm_j\in M_0\]
    since $M_0$ is a $K_0$-module. This implies that the $K_0'$-submodule generated by $M_0$ contains in 
    $\vpi^{-N}M_0$.
\end{proof}

From now on, we fix a definition subring $K_0\subseteq K$ containing $\vpi$.
Denote by $\mathrm{Mod}^{\mathrm{cadic}}_K$ the category of complete adic $K$-modules where morphisms are continuous homomorphisms. Lemma \ref{lem: adic independ K_0} shows that all adic $K$-module is adic with respect to $K_0$.

For any two complete adic modules $M$ and $N$, we write $\Hom_K(M,N)$ for the set of continuous homomorphisms from $M$ to $N$.
The category $\mathrm{Mod}^{\mathrm{cadic}}_K$ is in general not an abelian category, as it does not admit arbitrary cokernels. However, it retains several important properties reminiscent of abelian categories.

\begin{lemma}\label{lemma:foundation linear operators}
    Let $M,N\in \mathrm{Mod}_K^{\mathrm{cadic}}$ and fix definition submodules $M_0\subseteq M,\ N_0\subseteq N$.

    (1) There is a canonical identification $M_0[\tfrac{1}{\vpi}]\cong M$.
    
    (2) Let $f:M\to N$ be a continuous map such that there exists positive integers $m,n$ such that for any $x\in M$,
    $$f(\vpi^mx)=\vpi^nf(x).$$
    Then there exists an integer $r$ such that $f(M_0)\subseteq \vpi^rN_0$.
    
    (3) The natural homomorphism $\Hom_{K_0}(M_0,N_0)[\tfrac{1}{\vpi}]\to \Hom_{K}(M,N)$ is an isomorphism.

    (4) There is a topology on $\Hom_{K}(M,N)$ makes it a topological module such that the subset $$\{f\in \Hom_K(M,N):f(M_0)\subseteq N_0\}$$ is open and carries the \( \varpi \)-adic topology. This topology is independent of the choices of $K_0$, $M_0$ or $N_0$ and is complete.
\end{lemma}

\begin{proof}
    (1) Since $M_0$ is a submodule of $M$, it is $\vpi$-torsion free, the left-hand side is a submodule of the right-hand side. Moreover, $M_0$ is an open submodule in $M$, so for each $m\in M$, the condition $\lim_{n\to \infty}\vpi^nm=0$ implies that there exists some $n>0$ such that $\vpi^nm\in M_0$, showing $M_0[\tfrac{1}{\vpi}]=M$.

    (2) Assume for contradiction that for any $r\in\Z$, $f(M_0)\not\subseteq\vpi^rN_0$. Then we can choose for each $0<r\in\Z$ an element $x_r\in M_0$, such that $f(x_r)\not\in \vpi^{-nr}N_0$. Consider the sequence $\{\vpi^{mr} x_r:r\geq 1\}$: It converges to $0$ but the sequence $\{f(\vpi^{mr}x_r)\}$ dose not, contradicting the continuity of $f$.

    (3) Consider the natural map
    $$F:\Hom_{K_0}(M_0,N_0)[\tfrac{1}{\vpi}]\to \Hom_{K}(M,N).$$
    The injectivity of $F$ follows from the canonical map $\Hom_{K_0}(M_0,N_0)\to\Hom_K(M,N)$ is injective (by definition). 
    
    For surjectivity, it is equivalent to prove that for any $f:M\to N$ which is $K$-linear and continuous, there exists an $N\in \mathbb{Z}$ such that $f(M_0)\subseteq \vpi^{N}N_0$, which is a consequence of (2).

    (4) First, we verify that the definition of this topology is independent of the choices of \(K_0\), \(M_0\), or \(N_0\).

Let \(K_0'\) be another definition subring of $K$ containing \(\varpi\), \(M_0' \subseteq M\) and \(N_0' \subseteq N\) be definition submodules with respect to \(K_0'\). Let \(K_0'' = K_0 \cap K_0'\), then \(M_0, M_0'\) are both definition submodules of \(M\) with respect to \(K_0''\). Similarly, \(N_0, N_0'\) are both definition submodules of \(N\) with respect to \(K_0''\). Note that as submodules of \(\operatorname{Hom}_K(M, N)\), we have
\[
\operatorname{Hom}_{K_0}(M_0, N_0) = \{f \in \operatorname{Hom}_K(M, N) : f(M_0) \subseteq N_0\} = \operatorname{Hom}_{K_0''}(M_0, N_0).
\]
Similarly,
\[
\operatorname{Hom}_{K_0'}(M_0', N_0') = \operatorname{Hom}_{K_0''}(M_0', N_0').
\]
Therefore, we can assume \(K_0 = K_0'\).
    
    It suffices to prove that for any definition submodules $M_0'\subseteq M$ and $N_0'\subseteq N$, as submodules of $\Hom_K(M,N)$, there exists an integer $r$, such that 
    $$\vpi^r\Hom_{K_0}(M_0,N_0)\subseteq \Hom_{K_0}(M_0',N_0').$$
    In fact, we can choose $r$ such that $\vpi^rM_0\subseteq M_0'$ and $\vpi^rN_0'\subseteq N_0$, then by definition,
    $$\vpi^{2r}\Hom_{K_0}(M_0,N_0)= \Hom_{K_0}(\vpi^{-r}M_0,\vpi^rN_0)\subseteq \Hom_{K_0}(M_0',N_0').$$
    This proves that the topology does not depend on the choice of $K_0$, $M_0$ or $N_0$. 
    
    For completeness, we only need to prove that $\Hom_{K_0}(M_0,N_0)$ is $\vpi$-adically complete. This follows from the fact that $M_0$, $N_0$ are $\vpi$-adically complete and $\vpi$-torsion free.
\end{proof}

\begin{rmk}
    The topology on $\Hom_K(M,N)$ is the generalization of the strong topology in the functional analysis.
\end{rmk}

\begin{prop}\label{prop: complete tensor product}
    Let $M,N$ be objects in $\mathrm{Mod}_K^{\mathrm{cadic}}$. There exists an object $L\in\mathrm{Mod}_K^{\mathrm{cadic}}$ as well as a continuous bilinear map $u:M\times N\to L$ satisfying the universal property:
    \begin{itemize}
        \item Let $P\in\mathrm{Mod}_K^{\mathrm{cadic}}$ and $\phi:M\times N\to P$ be a continuous bilinear map, there exists a unique continuous linear homomorphism $f:L\to P$ such that $\phi=f\circ u$.
    \end{itemize}
\end{prop}

\begin{proof}
    Let $M\otimes_K N$ be the usual tensor product. Fix definition submodules $M_0\subseteq M$ and $\ N_0\subseteq N$ and define $(M\otimes_K N)_0$ to be the image of $M_0\otimes_{K_0} N_0$ in $M\otimes_KN$. Let $L=\widehat{(M\otimes_{K}N)_0}[\tfrac{1}{\vpi}]$ and the bilinear map $u:M\times N\to L$ be the natural one. 

    We check that $u:M\times N\to L$ satisfies the universal property. Let $P\in \mathrm{Mod}^{\mathrm{cadic}}_K$ and $\phi:M\times N\to P$ be a continuous bilinear map. By Lemma \ref{lemma:foundation linear operators} (2), there exists $P_0\subseteq P$ which is a definition submodule such that $\phi(M_0\times N_0)\subseteq P_0$. This induces a natural homomorphism $f_0:M_0\otimes_{K_0}N_0\to P_0$. Taking the completion of $f_0$ and inverting $\vpi$, we obtain the required $f:L\to P$. The uniqueness follows from the fact that the image of $M\otimes_KN$ (usual tensor product) in $L$ is dense.
\end{proof}

\begin{defi}
    For any $M,N\in\mathrm{Mod}_K^{\mathrm{cadic}}$, $x\in M$ and $y\in N$, we will use $M\widehat{\otimes}_{K}N$ as well as $x\widehat{\otimes}y$ to denote the object $L$ and $u(x,y)$ in Proposition \ref{prop: complete tensor product}.
\end{defi}

The complete tensor products also have the similar adjoint property with general tensor products.

\begin{lemma}[Adjoint properties of complete tensor products]\label{lemma: adj tensor}
    (1) Let $M,N,P$ be three objects in $\mathrm{Mod}_K^{\mathrm{cadic}}$. There is a natural $K$-linear continuous isomorphism
    $$\Hom_{K}(M\widehat{\otimes}_KN,P)\cong \Hom_K\left(M,\Hom_K(N,P)\right),$$
    where the $\Hom_K(-,-)$ are all topologized in the way in Lemma \ref{lemma:foundation linear operators}.

    (2) Let $A$ be a complete Tate algebra over $K$ and $M\in\mathrm{Mod}_K^{\mathrm{cadic}}$. Then there is a canonical $A$-module structure on $A\widehat{\otimes}_KM$ which makes $A\widehat{\otimes}_KM$ an object in $\mathrm{Mod}_{A}^{\mathrm{cadic}}$. Moreover, for any $N\in\mathrm{Mod}_{A}^{\mathrm{cadic}}$ there is a caonical topological isomorphism between complete adic $A$-modules:
    $$\Hom_{A}(A\widehat{\otimes}_KM,N)\cong \Hom_{K}(M,N).$$
\end{lemma}

\begin{proof}
    This can be proved in the same way as the proof to the usual tensor products. See for example, \cite[Chapter 2]{Atiyah_2018}.
\end{proof}

An important example of the complete tensor products is the rational localization of a complete adic module.

\begin{lemma}\label{lemma: topology on localisation}
    Let $g,f_1,f_2,\dots f_n\in K$ which generate the unit ideal, and $M\subseteq \mathrm{Mod}_K^{\mathrm{cadic}}$. Fix $M_0$ a definition submodule of $M$.
    Let $M_0[\tfrac{f_1,\dots,f_n}{g}]$ be the $K_0[\tfrac{f_1,\dots,f_n}{g}]$-submodule of $M[\tfrac{1}{g}]$ generated by $M_0$. Then there is a topological $K$-module structure of $M[\tfrac{1}{g}]$ makes $M_0[\tfrac{f_1,\dots,f_n}{g}]$ open and $\vpi$-adically topologized.
\end{lemma}

\begin{proof}
    Since $M_0[\tfrac{f_1,\dots,f_n}{g}]$ is a $K_0$-submodule of $M$, it suffices to prove that $$M_0[\tfrac{f_1,\dots,f_n}{g}][\tfrac{1}{\vpi}]=M.$$

    By the assumption, there exists $h_0,h_1,\dots,h_n\in K_0$ and a positive integer $N$ such that $h_0g+\sum_{i=1}^nh_if_i=\vpi^N$. Hence
    $$h_0+\sum_{i=1}^nh_i\tfrac{f_i}{g}=\tfrac{\vpi^N}{g}$$
    holds in $A[\tfrac{1}{g}]$. This implies that we only need to check
    $$M_0[\tfrac{\vpi^N}{g}][\tfrac{1}{\vpi}]=M[\tfrac{1}{g}],$$
    which holds obviously.
\end{proof}

\begin{defi}
    Let $g,f_1,f_2,\dots f_n\in K$ which generate the unit ideal, and $M\subseteq \mathrm{Mod}_K^{\mathrm{cadic}}$. Define the \emph{rational localization} to be the completion of $M[\tfrac{1}{g}]$ under the topology defined in Lemma \ref{lemma: topology on localisation}. Denote it by 
    $$M\langle \tfrac{f_1,f_2,\dots,f_n}{g} \rangle.$$
\end{defi}

\begin{lemma}\label{lemma: adj localization}
    Let $M$ be an object in $\mathrm{Mod}_K^{\mathrm{cadic}}$ which is equipped with a definition submodule $M_0$ and $N$ be an object in $\mathrm{Mod}_{K\langle 
    \tfrac{f_1,f_2,\dots,f_n}{g} \rangle}^{\mathrm{cadic}}$. Then each $K$-linear continuous homomorphism from $M$ to $N$ extends uniquely to a $K$-linear continuous homomorphism $M\langle 
    \tfrac{f_1,f_2,\dots,f_n}{g} \rangle\to N$. Moreover, this extension is $K\langle 
    \tfrac{f_1,f_2,\dots,f_n}{g} \rangle$-linear.
\end{lemma}

\begin{proof}
    Let $N$ be an object in $\mathrm{Mod}_{K\langle \tfrac{f_1.f_2,\dots,f_n}{g} \rangle}^{\mathrm{cadic}}$ and $N_0$ is a definition submodule as $K\langle 
    \tfrac{f_1,f_2,\dots,f_n}{g} \rangle$-module. Then by definition, $N_0$ is also a definition submodule as a $K$-module. Let $\phi:M\to N$ be a $K$-linear continuous homomorphism. This extends uniquely to a $K$-linear homomorphism $M[\tfrac{1}{g}]\to N$ since $g$ is invertible in $K\langle 
    \tfrac{f_1,f_2,\dots,f_n}{g} \rangle$. Still denote this extension by $\phi$. Moreover, we can choose $N_0$ such that $\phi(M_0)\subseteq N_0$ by Lemma \ref{lemma:foundation linear operators} (2). Hence, 
    $$\phi\left(M_0[\tfrac{f_1,f_2,\dots,f_n}{g}]\right)\subseteq N_0$$
    and $\phi$ extends to $M\langle\tfrac{f_1,f_2,\dots,f_n}{g} \rangle$ by taking completion. The uniqueness comes from the fact that $M[\tfrac{1}{g}]$ is dense in $M\langle\tfrac{f_1,f_2,\dots,f_n}{g} \rangle$ and the $K\langle\tfrac{f_1,f_2,\dots,f_n}{g} \rangle$ of the extension follows from the construction.
\end{proof}

\begin{cor}
    (1) The definition of the rational localizations does not depend on the choice of definition modules.

    (2) For any $f_1,f_2,\dots,f_n,g\in K$ which generate the unit ideal as well as $M\in \mathrm{Mod}_K^{\mathrm{cadic}}$, there is a canonical isomorphism
    $$M\langle\tfrac{f_1,f_2,\dots,f_n}{g}\rangle\cong K\langle\tfrac{f_1,f_2,\dots,f_n}{g}\rangle\widehat{\otimes}_KM.$$
\end{cor}

\begin{proof}
    (1) is a corollary of (2).

    For (2), this is just a combination of Lemma \ref{lemma: adj tensor} Lemma \ref{lemma: adj localization}.
\end{proof}

\begin{defi}
    For any $M\in\mathrm{Mod}_K^{\mathrm{cadic}}$, define the module
    $$M\langle t\rangle=\widehat{M_0[t]}[\tfrac{1}{\vpi}]=\{\sum_{i=0}^{\infty}m_it^i\subseteq M[[t]]:\lim_{n\to +\infty}m_n=0\}$$
    and
    $$M\langle t^{\pm1}\rangle=\widehat{M_0[t^{\pm1}]}[\tfrac{1}{\vpi}]=\{\sum_{i=-\infty}^{\infty}m_it^i\subseteq M[[t]]:\lim_{n\to \pm\infty}m_n=0\},$$
    where $M_0$ is a definition submodule.

    This definition obviously makes $M\langle t\rangle$ a complete adic $A\langle t\rangle$-module and $M\langle t^{\pm1}\rangle$ a complete adic $A\langle t^{\pm1}\rangle$-module.
\end{defi}

\begin{lemma}
    Let $A$ be a complete Tate-algebra over $K$, then for any $M\in\mathrm{Mod}_A^{\mathrm{cadic}}$,
    $$M\widehat{\otimes}_{K}K\langle t\rangle\cong M\langle t\rangle$$ and $$M\widehat{\otimes}_KK\langle t^{\pm 1}\rangle\cong M\langle t^{\pm 1}\rangle$$
    as topologically $A$-modules.
\end{lemma}

\begin{proof}
    This can be checked by the construction of the complete tensor product.
\end{proof}

Finally, we introduce the open mapping theorem in the setting of complete adic modules. As a first application, we will consider the finitely generated complete adic modules and then topologize all finitely rank projective modules canonically.

\begin{theo}[Open mapping theorem]\label{theo: open mapping theorem}
    Let $f$ be a morphism of complete adic $K$-modules. If $f$ is surjective, then $f$ is open.
\end{theo}

\begin{proof}
    This can be proved in the same way as the proof of the open mapping theorem of Banach spaces. See \cite{henkel2014openmappingtheoremrings} for details.

    We do not prove this theorem here. However, since all Tate rings we will be concerned with are over $\bQ_p$. We can view all complete adic modules as \(\mathbb{Q}_p\)-Banach spaces, in which case the statement reduces to the classical open mapping theorem for Banach spaces. 
\end{proof}

In the following, we call a complete adic module \emph{finitely generated (resp. finitely presented)} if the underlying module is finitely generated (resp. finitely presented).

\begin{cor}\label{cor: linear from finitely generated cadic is continuous}
    Let \(f: M \to N\) be a (not necessarily continuous) \(K\)-linear map between complete adic modules, where \(M\) is finitely generated. Then \(f\) is continuous.
\end{cor}

\begin{proof}
     Choose a surjective $K$-linear map $p:F\to M$ where $F$ is a finite rank free module.
    Note that for any topological $K$-module $E$ and $e\in E$, the map \[l_e(a):=ae \]
    from $K$ to $E$ is continuous by the definition of topological modules. By this fact, both $f$ and $g=f\circ p$ are continuous. By Theorem \ref{theo: open mapping theorem}, $f$ is open. Thus, for any open subset $U\subseteq N$, \[f^{-1}(U)=p(g^{-1}(U))\]
    is open, which implies that $f$ is continuous.
\end{proof}

In particular, we have the following corollary which says that there exists at most one way to topologize a finitely generated module to a complete adic module.

\begin{cor}
    Let $M$ be a finitely generated $K$-module, there exists at most one topology on $M$ making $M$ a complete adic $K$-module.
\end{cor}

\begin{proof}
    Let $M_1$ and $M_2$ be two complete adic module with underlying module $M$. Then $id_M$ is continuous a morphism from $M_i$ to $M_j$ for any choice of $i,j\in\{1,2\}$ by Corollary \ref{cor: linear from finitely generated cadic is continuous}. This implies that $M_1$ and $M_2$ are isomorphic as topological modules.
\end{proof}

For a finitely generated $K$-module, we say $M$ is \emph{complete adic} if there exists a (hence, unique) way to topologize $M$ as a complete adic module.

In general, not all finitely generated modules are complete adic. For example, the quotient of $K$ by a non-closed ideal is not complete adic. However, we will see that a direct summand of a finitely generated complete adic module is itself complete adic.

\begin{theo}
    Let $M$ be a finitely generated complete adic $K$-module, and let $M = N_1 \oplus N_2$ as $K$-modules. Then both $N_1$ and $N_2$ are complete adic.
\end{theo}

\begin{proof}
    Let $m_1,m_2,\dots,m_r$ be generators of $M$. For any $i$, let $m_i=p_i+q_i$ where $p_i\in N_1$ and $q_i\in N_2$. Obviously, $N_1$ is generated by $p_1,p_2,\dots,p_r$ and $N_2$ is generated by $q_1,q_2\dots,q_r$. Put
    \[N_1^\circ=K_0p_1+K_0p_2+\dots+K_0p_r;\ N_2^\circ=K_0q_1+K_0q_2+\dots+K_0q_r\]
    and $M^\circ=N_1^\circ\oplus N_2^\circ$ which is a finitely generated $K_0$-module.
    By Theorem \ref{theo: open mapping theorem} and Lemma \ref{lemma:foundation linear operators} (2), $M^\circ$ is a definition module; hence
    $N_1^\circ\oplus N_2^\circ$ is $\varpi$-adic complete. Consequently, both $N_1^\circ$ and $N_2^\circ$ are $\varpi$-adically complete, and the theorem follows.
\end{proof}

As a corollary, all finite rank projective modules are complete adic.

\subsection{The \'etale morphisms and the \'etale topology of adic spaces}

The material in this section is taken mainly from \cite[Subsection 8.2]{KEDLAYA_2018} and \cite[Appendix C]{Zavyalov_2025}.

All Tate rings are assumed to be complete. 

\begin{defi}[Finite \'etale morphisms]
    Let $f:(A,A^+)\to (B,B^+)$ be a morphism of Tate--Huber pairs. We say that $f$ is \emph{standard étale} if $f$ is a finite composition of rational localizations and finite étale maps.
\end{defi}

\begin{rmk}
    As we assume that $A$ is complete, then for any finite \'etale $A$ algebra $B$, there exists a unique topology on $B$ making $B$ a topologically $A$-algebra. Hence, the category of finite \'etale Tate--Huber pairs over $(A,A^+)$ is equal to the category of finite \'etale algebras over $A$.
\end{rmk}

\begin{defi}[Standard \'etale morphisms]\label{defi: standard etale}
     Let $f:(A,A^+)\to (B,B^+)$ be a morphism between complete Tate--Huber pairs. Call $f$ to be \emph{standard \'etale} if $f$ is a finite composition of rational localizations and finite \'etale maps.    
\end{defi}

\begin{rmk}
    The name `standard étale' is taken from \cite{Liu_2016}. In \cite{KEDLAYA_2018}, this is referred to as strictly étale; in \cite{Zavyalov_2025}, it is called strongly étale. 
\end{rmk}

It is clear that standard étaleness is stable under base change. The notion of standard étaleness is sufficiently robust that it also satisfies the following 2-out-of-3 property.

\begin{theo}\label{theo: 2outof3 st etale}
    Let $f:(A,A^+)\to (B,B^+)$ and $g:(B,B^+)\to (C,C^+)$ be morphisms of Tate--Huber pairs and $h:=g\circ f$.
    \begin{enumerate}[(1)]
        \item If both $f$ and $g$ are standard \'etale, so is $h$.
        \item If both $f$ and $h$ are standard \'etale, so is $g$.
    \end{enumerate}
\end{theo}

\begin{proof}
    The first claim is obvious by definition. We only need to prove the second one.

    By definition, there exist sequences of morphisms
    \[A=B_0\xrightarrow[]{f_0}B_1\xrightarrow[]{f_1}\dots\xrightarrow[]{f_{m-1}}B_m=B\]
    and
    \[A=C_0\xrightarrow[]{h_0}C_1\xrightarrow[]{h_1}\dots\xrightarrow[]{h_{n-1}}C_n=C\]
    such that each $f_i,h_j$ is either a rational localization or finite \'etale. Proceeding by induction on $m$ (the length of the sequence for $f$), it suffices to prove that the composition
    \[g_1:=g\circ f_{m-1}\circ f_{m-2}\circ\dots\circ f_1:B_1\to C\]
    is standard \'etale.

Consider the composition
\[B_1\xrightarrow[]{\alpha:=id\otimes 1} B_1\widehat{\otimes}_AC\xrightarrow[]{\beta:=g_1\otimes id}C.\]
Clearly, this composition coincides with $g$. Hence, it suffices to prove that both $\alpha$ and $\beta$ are standard étale. Since $\alpha$ is a base change of $g_1$, it is standard étale. If $f_0$ is a rational localization, then the universal property of rational localizations implies that $\beta$ is an isomorphism. If $f_0$ is finite étale, then $\alpha$ is finite étale. Since $\beta$ is a base change of $\alpha$ (by an automorphism of the target), it is also finite étale by standard facts in commutative algebra. 
\end{proof}

Consequently, fix a Tate--Huber pair $(A,A^+)$ and put $X=\Spa(A,A^+)$, the category of all standard \'etale Tate--Huber pairs over $(A,A^+)$ forms a topology, whose coverings are defined to be families of quasi-compact jointly surjective morphisms. This topology is denoted by $X_{\text{s-\'et}}$. Let $\inte{X,\text{s-\'et}}$ (resp. $\inte{X,\text{s-\'et}}^+$) be the presheaf on $X_{\text{s-\'et}}$ sending each $\Spa(B,B^+)$ to $B$ (resp. $B^+$). In the following, we will call $\inte{\text{s-\'et}}$ the \emph{structure presheaf}.

\begin{defi}
    Let $(A,A^+)$ be a Tate--Huber pair. We call $(A,A^+)$ \emph{\'etale sheafy} if the structure presheaf is a sheaf. If $A$ is a Tate ring and $(A, A^{+})$ is étale sheafy for every choice of $A^{+}$, then $A$ itself is called \emph{étale sheafy}. \footnote{Indeed, for a Tate ring $A$, if $(A,A^+)$ is \'etale sheafy for one choice $A^+$, it is \'etale sheafy as a Tate ring. But we will not use this fact.}
\end{defi}

By Theorem \ref{theo: 2outof3 st etale}, if $(A,A^+)$ is \'etale sheafy, then any standard \'etale Tate--Huber pairs over $(A,A^+)$ is \'etale sheafy.

An important observation is that we can prove the \'etale sheafiness by embedding into some algebras which are already known to be \'etale sheafy.

\begin{defi}\label{definition: sous-etale sheafy}
    We make the following definitions:
    \begin{enumerate}[(1)]
        \item Let $A$ be a complete Tate ring. We say that $A$ is \emph{sous-\'etale sheafy} if there exists a closed embedding of topological algebras $A\subset B$ such that:
    \begin{itemize}
        \item $A\subset B$ is a direct summand as topological $A$-modules.

        \item $B$ is \'etale sheafy.
    \end{itemize}

        \item Let $A$ be a complete Tate algebra. Then $A$ is called \emph{strongly \'etale (resp. sous-\'etale) sheafy} if $A\langle T_1,T_2,\dots,T_n\rangle$ is \'etale (resp. sous-\'etale) sheafy for any $n$.
    \end{enumerate}
\end{defi}

\begin{rmk}
    A closely related notion is that of \emph{sous-perfectoid} algebras; see, for example, \cite[Section 6.3]{Scholze_2020}.
\end{rmk}

\begin{prop}
    Let $A$ be a sous-\'etale sheafy complete Tate algebra. Then the following hold:
    \begin{enumerate}[(1)]
        \item If $B$ is a standard \'etale Tate algebra over $A$, then $B$ is sous-\'etale sheafy.

        \item $A$ is \'etale sheafy
    \end{enumerate}
\end{prop}

\begin{proof}
    (1) follows immediately from the definition.

    (2) Let $A \to \widetilde{A}$ be an embedding as in Definition \ref{definition: sous-etale sheafy}. Let $\widetilde{A}=A\oplus M$ be a decomposition of topological $A$ modules. Let $\{A\to A_i:i=1,2,\dots,n\}$ be a standard \'etale covering, then by the sheafiness of $\widetilde{A}$, the \v Cech sequence
    \begin{equation}\label{eq:sous etale sheafy}0\to \widetilde{A}\to \prod_{i=1}^{n} A_i\widehat{\otimes}_A\widetilde{A}\to\prod_{i,j=1}^{n} A_i\widehat{\otimes}_AA_j\widehat{\otimes}_A\widetilde{A}\end{equation}
    is exact. Therefore, the sequence 
    $$0\to A\to \prod_{i=1}^{n} A_i\to\prod_{i,j=1}^{n} A_i\widehat{\otimes}_AA_j$$
    is exact since it is a direct summand of (\ref{eq:sous etale sheafy}).
\end{proof}

We globalize the above definitions to adic spaces.

\begin{defi}
    We say that an adic space $X$ is \emph{\'etale sheafy} (resp. \emph{strongly \'etale sheafy}) if there exists an open covering $X=\cup_{i\in I}X_i$ such that each $X_i$ is isomorphic to some $\Spa(A_i,A_i^+)$, where
    $(A_i,A_i^+)$ is a \'etale sheafy (resp. strongly \'etale sheafy) Tate--Huber pair.
\end{defi}

In this paper, all adic spaces we consider are \'etale sheafy. Hence, we will only define the \'etale topology of \'etale sheafy adic spaces.

\begin{defi}\label{defi: etale morphism and topology}
    Let $X$ be an \'etale sheafy adic spaces.
    \begin{enumerate}[(1)]
        \item A morphism $f:Y\to X$ of adic spaces is called \emph{locally \'etale} if for any point $y\in Y$, there exists an open neighborhood $U$ of $y$ and an open neighborhood $V$ of $f(y)$ such that the following conditions hold:
        \begin{itemize}
            \item $f(U)\subseteq V$;
            \item Both $U$ and $V$ are isomorphic to some $\Spa(A,A^+)$ where $(A,A^+)$ is an \'etale sheafy Tate--Huber pair.
            \item The induced homomorphism of Tate--Huber pairs \[\left(\calO(V),\calO^+(V)\right)\to \left(\calO(U),\calO^+(U)\right)\] is standard \'etale.
        \end{itemize}
        \item The \emph{\'etale topology} of $X$ is defined as the topology whose underlying category consists of locally \'etale morphisms $f:Y\to X$ of adic spaces, and whose coverings are families of quasi-compact jointly surjective morphisms. Denote by $X_{\et}$ the \'etale topology of $X$.
    \end{enumerate}
\end{defi}

\begin{rmk}
We need to check that arbitrary finite products exist in the category $X_{\et}$. Locally, these are defined via complete tensor products. For general cases, one glues the local constructions, which is a technique-free but heavy imitation of the argument for schemes.
\end{rmk}

\begin{rmk}
    We do not impose any separateness condition here. That is the reason why we use `locally \'etale' instead of `\'etale'.
\end{rmk}

\begin{rmk}
    By Theorem \ref{theo: 2outof3 st etale}, for any \'etale sheafy Tate adic space $X$, an adic space that is locally \'etale over $X$ is also \'etale sheafy.
\end{rmk}

Finally, we prove that standard \'etale homomorphisms satisfy an analogue of the formally \'etale property.

\begin{theo}\label{theo: formally etale of standard etale}
    Let $A\to B$ be a standard \'etale homomorphism of Tate rings. Assume that we are given a diagram of Tate rings
    \[\begin{tikzcd}
        B\ar[r] & C/I\\ A\ar[u]\ar[r] &C\ar[u]
    \end{tikzcd},\]
    where $I\subseteq C$ is a closed ideal with $I^2=0$.
    Then there exists a unique arrow $B\to C$ making the diagram commute.
\end{theo}

\begin{proof}
    It suffices to consider the cases where $B$ is either a rational localization of $A$ or finite \'etale over $A$. The first case follows from the universal property of rational localizations. The second case is the usual property of finite \'etale homomorphisms.
\end{proof}

\begin{rmk}
    By Theorem \ref{theo: 2outof3 st etale}, for any \'etale sheafy Tate adic space $X$ any adic space which is locally \'etale over $X$ is also \'etale sheafy.
\end{rmk}

As a corollary, for any Tate ring $A$ and a closed ideal $I\subseteq A$ such that $I^n=0$ for some $n$, the category of standard \'etale algebras over $A$ is canonically equivalent to the category of standard \'etale alegbras over $A/I$.

\subsection{Smooth morphisms and complete differential modules}

The following definition takes from \cite[Definition 1.6.1]{Huber_1996}.

\begin{defi}
    Let $A\to B$ be a continuous morphism. For any $M\in \mathrm{Mod}^{\text{cadic}}_B$, a \emph{continuous $A$ linear derivation} from $B$ to $M$ is an $A$-linear derivation from $B$ to $M$ which is continuous. Denote by $\mathrm{Der}_A^{\text{ctn}}(B,M)$ the set of all continuous $A$-linear derivations from $B$ to $M$.
\end{defi}

\begin{theo}
    Let $f:A\to B$ be a morphism of complete Tate rings. The functor sending each $M\in\mathrm{Mod}^{\mathrm{cadic}}_B$ to the set $\mathrm{Der}_A^{\mathrm{ctn}}(B,M)$ is representable. We denote by $\widehat{\Omega}_{B/A}$ this universal object and call it \emph{the complete differential module}.
\end{theo}

\begin{proof}
    Let $B_0\subset B$ be a bounded open subring and $A_0\subset f^{-1}(B_0)$ be a bounded open subring. Let $\varpi\in A_0$ be a pseudo-uniformizer. Define $\Omega^+_{B_0/A_0}$ as the sub-$B_0$-module of $\Omega_{B/A}$, the usual differential module, generated by the set $\{dx:x\in B_0\}$. Define $\widehat\Omega_{B/A}$ as the module $\widehat{\Omega_{B_0/A_0}^{+}}[\frac{1}{\varpi}]$, where the completion is the $\varpi$-adic completion. Endow $\widehat{\Omega}_{B/A}$ with the usual topology and it becomes a complete adic $B$-module. By definition, there exists a derivation $d$ from $B_0$ to $\Omega_{B_0/A_0}^+$ and $d$ can extend continuously to a derivation from $B$ to $\widehat{\Omega}_{B/A}$. Denote by $d$ this extension from $B$ to $\widehat{\Omega}_{B/A}$.

    We need to check that such $\widehat\Omega_{B/A}$ satisfies the required universal property. Let $M$ be a complete adic $B$-module and $\partial:B\to M$ be a continuous $A$-linear derivation. By the universal property of the usual differential module, there exists a unique $B$-linear homomorphism $F:\Omega_{B/A}\to M$ such that $\partial=F\circ d$. We only need to prove that $F$ factors through a unique continuous homomorphism $\widehat{F}:\widehat{\Omega}_{B/A}\to M$. The uniqueness is obvious, as the image of $\Omega_{B/A}$ in $\widehat{\Omega}_{B/A}$ is dense.

    It remains to prove the existence. Let $M_0\subseteq M$ be a sub-$B_0$-module of definition. As $\partial$ is continuous, we can change $M_0$ by some $\varpi^{-n}M_0$ so that $\partial(B_0)\subseteq M_0$. By the definition of $\Omega_{B_0/A_0}^+$, $F(\Omega_{B_0/A_0}^+)\subseteq M_0$. By taking the $\varpi$-adic completion and then localizing, $F$ extends to a homomorphism from $\widehat{\Omega}_{B/A}$ to $M=M_0[\frac{1}{\varpi}]$. This proves the existence of $\widehat{F}$.
\end{proof}

\begin{ex}
    Let $A$ be a Tate ring. We claim that
    \[\widehat\Omega_{A\langle T_1,T_2,\dots,T_n \rangle/A}=A\langle T_1,T_2,\dots,T_n \rangle dT_1\oplus A\langle T_1,T_2,\dots,T_n \rangle dT_2\oplus \dots\oplus A\langle T_1,T_2,\dots,T_n \rangle dT_n.\]
    Indeed, let $\delta:A\langle T_1,T_2,\dots,T_n \rangle\to M$ be a continuous $A$-linear derivation. One can easily verify that
    \[\delta(f)=\sum_{j=1}^n \frac{\partial f}{\partial T_j} \delta(T_j).\]
\end{ex}

\begin{theo}\label{theo:stet base change dif module}
    Let $f:A\to B$ be a morphism of Tate rings, and $g:B\to C$ be either a rational localization or a finite \'etale morphism of Tate rings. Put $h=g\circ f$. There exists a unique continuous $C$-linear isomorphism $I:C\widehat{\otimes}_{B}\widehat{\Omega}_{B/A}\cong \widehat{\Omega}_{C/A}$ which fits into the following diagram
    \begin{equation}\label{eq: stet dif diagram}\begin{tikzcd}
        B\ar{d}{g} \arrow[r] & \widehat{\Omega}_{B/A}\ar{d}{I(1\otimes-)}\\
        C\ar{r} & \widehat{\Omega}_{C/A}.
    \end{tikzcd}\end{equation}
\end{theo}

\begin{proof}
    By the universal property of $\widehat{\Omega}_{B/A}$, there exists a unique continuous $C$-linear morphism $I$ that fits into diagram (\ref{eq: stet dif diagram}). It remains to prove that $I$ is an isomorphism.

    First, assume that $g$ is finite \'etale. Let $A_0,B_0,C_0$ be open bounded subrings of $A,B,C$ such that $f(A_0)\subset B_0$ and $g(B_0)\subset C_0$. Furthermore, since $C$ is a finite projective $B$-module, we can choose $C_0$ so that $C_0$ is finitely generated as a $B_0$-module. Fix a pseudo-uniformizer $\vpi\in A_0$. By standard commutative algebra, there exists a canonical map
    \[I_0:C_0\otimes_{B_0}\Omega_{B_0/A_0}\to \Omega_{C_0/A_0}.\]
    Note that $I_0[\frac{1}{\vpi}]$ is an isomorphism since $g$ is finite \'etale; moreover, $I_0$ induces an injection on the torsion-free parts. The cokernel of $I_0$ is isomorphic to $\Omega_{C_0/B_0}$, which is a finitely generated $C_0$-module since $C_0$ is finitely generated over $B_0$. Hence, the cokernel of $I$ is killed by some $\vpi^m$. Combining the above two properties of $I_0$, we conclude that the induced morphism $I$ is an isomorphism.

    It remains to prove the case where $g$ is a rational localization. We prove this by checking the universal property directly. Recall from Lemma \ref{lemma: adj tensor} that for any complete adic $B$-module $M$ and complete adic $C$-module $N$,
    \[\mathrm{Hom}_B(M,N)\cong \mathrm{Hom}_{C}(C\widehat{\otimes}_BM,N).\]
    We only need to check that for any complete adic $C$-module $M$ the restriction provides an equivalence between $\mathrm{Der}^{\mathrm{ctn}}_A(C,M)$ and $\mathrm{Der}^{\mathrm{ctn}}_A(B,M)$. This is a direct consequence of the next lemma.
\end{proof}

\begin{lemma}
    Let $A$ be a Tate ring and $B=A\langle\frac{f_1,f_2,\dots,f_n}{g}\rangle$ be a rational localization. Let $M$ be a complete adic module over $B$, then the restriction induces a bijection from $\mathrm{Der}^{\mathrm{ctn}}(B,M)$ and $\mathrm{Der}^{\mathrm{ctn}}(A,M)$.
\end{lemma}

\begin{proof}
    Since the image of $A[\frac{1}{g}]$ is dense in $B$, we only need to check that any continuous derivation $d$ from $A$ to $M$ can be continuously extended to $B$. By standard results in commutative algebra, $d$ extends to a derivation from $A[\frac{1}{g}]$ to $M$. Let $B_0$ be a definition subring containing all $\frac{f_i}{g}$ and $M_0$ be a definition $B_0$-module. Let $m_i=d(\frac{f_i}{g})$ for any $i$. There exists $N\geq 0$ such that $\vpi^Nm_i\in M_0$ for all $i$, hence we can change $M_0$ to $\vpi^{-N}M_0$ so that $m_i\subset M_0$. Hence $d$ sends $A_0[\frac{f_1,f_2,\dots,f_n}{g}]$ to $M_0$. Taking $\vpi$-adic completion and inverting $\vpi$, we obtain a continuous extension of $d$ to $B$.
\end{proof}

\begin{cor}\label{cor: st smooth free dif mod}
    Let $f:A\to B$ be a morphism of Tate rings such that $f$ factors as
    \[A\to A\langle T_1,T_2,\dots,T_d \rangle\xrightarrow[]{g} B,\]
    where the first arrow is the canonical inclusion and $g$ is standard \'etale.
    Then $\widehat{\Omega}_{B/A}$ is a free $B$-module with basis $\{dT_i:i=1,2,\dots,d\}$.
\end{cor}

\begin{proof}
    When $g$ is an isomorphism, this is proved by the construction of complete differential modules, For general $g$, this is proved by Theorem \ref{theo:stet base change dif module}.
\end{proof}

In this paper, we restrict our attention to differential modules in the context of smooth morphisms. We begin by defining smoothness in our setting.

\begin{defi}\label{defi: standard smooth}
    Let $f:(A,A^+)\to (B,B^+)$ be a morphism of Tate--Huber pairs. Call $f$ \emph{standard smooth} if $f$ factors as
    \[(A,A^+)\to (A\langle T_1,T_2,\dots,T_d \rangle, A^+\langle T_1,T_2,\dots,T_d \rangle)\xrightarrow[]{g} (B',B'^+),\]
    where the first arrow is the canonical inclusion and $g$ is standard \'etale.
\end{defi}

\begin{defi}\label{defi: smooth+st smooth}
    Let $f:(A,A^+)\to (B,B^+)$ be a morphism of \'etale sheafy Tate--Huber pairs. We say that $f$ is smooth if for any point $x\in\Spa(B,B^+)$, there exists a rational localization $x\in U=\Spa(B',B'^+)$ such that the induced homomorphism $(A,A^+)\to (B',B'^+)$ is standard smooth.
\end{defi}

\begin{ex}\label{ex: rational + stsmooth}
    Let $f:(A,A^+)\to (A_1,A_1^+)$ be a standard \'etale morphism and $g:(A_1,A_1^+)\to (B,B^+)$ a standard smooth morphism. Then the composition $g\circ f$ is standard smooth. Indeed, consider a factorization of $g$
    \[A_1\xrightarrow[]{\iota} A_1\langle T_1,T_2,\dots,T_d \rangle\xrightarrow[]{h} B,\]
    where the first arrow is the canonical inclusion and $h$ is standard \'etale.
    Then $g\circ f$ has a factorization
    \[A\to A\langle T_1,T_2,\dots,T_d \rangle\to A_1\langle T_1,T_2,\dots, T_d \rangle \xrightarrow[]{h} B,\]
    where the second arrow, induced by $f$, is standard \'etale by definition.
\end{ex}

The above theorems can also be generalized to Tate adic spaces.

\begin{defi}
    Let $X$ be an \'etale sheafy adic space and $f:Y\to X$ a morphism of adic spaces. Then $f$ is called \emph{locally smooth} if for any $y\in Y$ with $x=f(y)\in X$, there exist affinoid open neighborhoods $U=\Spa(B,B^+)$ of $y$ and $V=\Spa(A,A^+)$ of $x$ such that $f(U)\subseteq V$ and the induced morphism of Tate--Huber pairs \[\widetilde{f}:(A,A^+)\to (B,B^+)\] is standard smooth.
\end{defi}

\begin{rmk}
    Note that by definition, if $X$ is a strongly \'etale sheafy adic space and $Y$ is locally smooth over $X$, then $Y$ is \'etale sheafy.
\end{rmk}

Finally, we define the sheaf of differentials of a smooth morphism. In scheme theory, this can be defined directly from the diagonal morphism. However, in our setting, due to the lack of fiber products \footnote{If we assume that all adic spaces are strongly \'etale sheafy, it is possible to define fiber products of smooth morphisms. We will not use this fact in this paper}, we define it by gluing universal differential modules.

First, we define the module of differentials when the target is affinoid.

\begin{defi}\label{defi: affinoid tar dif mod sheaf}
    Let $X=\Spa(A,A^+)$, where $(A,A^+)$ is an \'etale sheafy Tate--Huber pair, and let $f:Y\to X$ be a smooth morphism between \'etale sheafy adic spaces. Define the sheaf of differentials as the sheafification of the presheaf that assigns to each affinoid object $\Spa(B,B^+)\in Y_{\et}$ the module $\widehat{\Omega}_{B/A}$.

    We will use $\Omega_{Y/X}$ to denote this sheaf.
\end{defi}

As we hope, the sheaf of differentials is locally free.

\begin{theo}\label{theo: local on target dif mod is free}
    Following the notation in Definition \ref{defi: affinoid tar dif mod sheaf}, there exists an analytic cover $\bigcup_{\lambda\in\Lambda}U_\lambda=Y$ such that the restriction ${\Omega}_{Y/X}|_{U_\lambda}$ is a free $\mathcal{O}_{Y,\et}$-module.
\end{theo}

\begin{proof}
    Let $y\in Y$ be a point. We need to prove that there exists a neighborhood of $y$ where ${\Omega}_{Y/X}$ is free. By definition, there exist affinoid neighborhoods $U$ of $y$ and $V$ of $f(y)$ such that $f(U)\subseteq V$ and the restriction $f:U\to V$ is induced by a standard smooth morphism of Tate--Huber pairs.

    Let $V'$ be a rational open neighborhood of $f(y)$ contained in $V$ and let $U'$ be the preimage of $V'$ in $U$. Then the induced morphism $f:U'\to V'$ is a rational localization of the morphism $f:U\to V$. Hence, the induced map $f:U'\to V'$ is standard smooth. By Example \ref{ex: rational + stsmooth}, the restriction $f:U'\to V$ is induced by a standard smooth morphism between Tate--Huber pairs. By Corollary \ref{cor: st smooth free dif mod}, the restriction ${\Omega}_{Y/X}$ on $U'$ is free.
\end{proof}

\begin{lemma}\label{lemma: compare dif mod rational on target}
    Let $X=\Spa(A,A^+)$, where $(A,A^+)$ is an \'etale sheafy Tate--Huber pair. Let $X'=\Spa(A_1,A_1^+)$ be a rational localization of $X$, and let $f:Y\to X$ be a smooth morphism between \'etale sheafy adic spaces. Then
    \[ {\Omega}_{Y/X}\cong  {\Omega}_{Y/X'}.\]
\end{lemma}

\begin{proof}
    We can check this locally on $Y$. Hence, we can assume that $Y=\Spa(B,B^+)$ and that the induced morphism $(A_1,A_1^+)\to (B,B^+)$ is standard \'etale by the proof of Theorem \ref{theo: local on target dif mod is free}. Hence, the induced map from $(A,A^+)$ to $(B,B^+)$ is standard smooth. The lemma follows from Corollary \ref{cor: st smooth free dif mod}.
\end{proof}

Using the above two results, we can define differential modules for general smooth morphisms.

\begin{theo}
    Let $f:Y\to X$ be a smooth morphism between \'etale sheafy adic spaces. Let $U=\Spa(A,A^+)$ and $V=\Spa(B,B^+)$ be two affinoid open subsets of $X$ where both $(A,A^+)$ and $(B,B^+)$ are \'etale sheafy. Then the sheaves $ {\Omega}_{f^{-1}(U)/U}$ and $ {\Omega}_{f^{-1}(V)/V}$ are canonically isomorphic on $U\cap V$. 
\end{theo}

\begin{proof}
    We construct the isomorphism locally on $U\cap V$ and then glue them. For any $x\in U\cap V$, choose a rational open subset $E$ of $U$ which both contains $x$ and is contained in $V$. Choose a rational open subset $E'$ of $V$ which is contained in $E$. Then $E'$ is rational as a open subset of both $U$ and $V$. The isomorphism is given by
    \[ {\Omega}_{f^{-1}(U)/U}|_{E'}\cong  {\Omega}_{f^{-1}(E')/E'}\cong  {\Omega}_{f^{-1}(V)/V}|_{E'}\]
    by Lemma \ref{lemma: compare dif mod rational on target}.

    This isomorphism is canonical and hence can exist globally on $U\cap V$.
\end{proof}

 As a consequence, all vector bundles $ {\Omega}_{f^{-1}(U)/U}$, where $U=\Spa(A,A^+)$ is an affinoid open such that $(A,A^+)$ is \'etale sheafy, can glue to a vector bundle on $Y$. We denote the vector bundle by $ {\Omega}_{Y/X}$ and call it \emph{the sheaf of differentials}.

\subsection{Special cases: strongly noetherian spaces and smoothoid spaces}

In this subsection, we will introduce our definitions of differential modules in two special cases, the strongly noetherian spaces and the smoothoid adic spaces.

\subsubsection*{The strongly noetherian case --} We first recall the following definition.

\begin{defi}
    Let $A$ be a complete Tate ring. Then $A$ is called \emph{strongly noetherian} if for any integer $n$, the Tate ring
    \[A\langle T_1,T_2,\dots,T_n \rangle\]
    is noetherian.
\end{defi}

An important example of strongly noetherian rings is complete non-archimedean fields, which is known as an application of the Weierstrass preparation theorem (cf. \cite[Proposition 14/ Section 2.2]{Bosch_1993}).

\begin{theo}
    Strongly noetherian rings are strongly \'etale sheafy.
\end{theo}

\begin{proof}
    See \cite[(2.2.2)]{Huber_1996}
\end{proof}

In \cite[Section 1.6]{Huber_1996}, Huber defined the differential modules, \'etale morphisms and smooth morphisms for strongly noetherian rings. We first compare Huber's definition with our definition.

\begin{theo}
    Let $X$ be an adic space which is locally isomorphic to $\Spa(A,A^+)$ where $A$ is strongly noetherian. Then the \'etale topos of $X$ in the sense of Huber is equal to the topos of $X_{\et}$ in Definition \ref{defi: etale morphism and topology}.
\end{theo}

\begin{proof}
    See \cite[Proposition 3.2.2]{deJong1996} for the case where $X$ is a rigid analytic space. This proof also works for general strongly noetherian varieties.
\end{proof}

The definitions of differential modules for smooth morphisms are also compatible. Indeed, these two definitions coincide in the strongly noetherian case (cf. \cite[Section 1.6]{Huber_1996}).

We end this part by introducing the following criterion of the noetherianness of a Tate algebra.

\begin{theo}\label{theo: reduction noetherian}
    Let $A$ be a Tate ring which is noetherian as an abstract ring. Then $A$ is strongly noetherian if and only if $A_{\mathrm{red}}$, the reduction of $A$, is strongly noetherian. 
\end{theo}

\begin{proof}
    It suffices to show the `if' part. That is, if $A_{\mathrm{red}}$ is strong noetherian, then $A$ is strongly noetherian.

    Assume that $A_{\mathrm{red}}$ is strongly noetherian. We claim that the reduction of $A\langle T\rangle$ is equal to $A_{\mathrm{red}}\langle T\rangle$ and that $A\langle T\rangle$ is noetherian.
    If this holds, the claim follows by induction on the number of variables.

    Let $I$ be the radical of $A$. The ideal
    \[I\langle T\rangle:=\{f=\sum_{j=0}^\infty a_jT^j\in A\langle T\rangle: a_j\in I,\ \forall j\},\]
    is nilpotent. In fact, under the inclusion $A\langle T\rangle\to A[[T]]$, $I\langle T\rangle$ is included in $I[[T]]$, which is nilpotent. A similar argument shows that $A_{\mathrm{red}}\langle T\rangle$ is reduced. Thus, the radical of $A\langle T\rangle$ is equal to $A_{\mathrm{red}}\langle T\rangle$.

    Next we show that $A\langle  T\rangle$ is noetherian. By \cite[Section 3.7.2, Proposition 2]{Bosch_1984}, any ideal of $A$ is closed. In particular, for each integer $N\ge 1$, $A/I^N$ is a noetherian Tate ring. Since $I$ is nilpotent, it suffices to show that each $A/I^N\langle T\rangle$ is noetherian. We prove this by induction on $N$. For $N=1$, this holds by the assumption that $A/I=A_{\mathrm{red}}$ is strongly noetherian. Assume that we have already proved the case for $N=n$. Note that $I^n/I^{n+1}$ is an $A/I^n$-module and we choose generators $x_1,x_2,\dots,x_r$. We claim that $I^n/I^{n+1}\langle T\rangle$ is generated by $x_1,x_2,\dots,x_r$ as an $A/I^n\langle T\rangle$-module. In fact, by the open mapping theorem (cf. Theorem \ref{theo: open mapping theorem}), for any bounded open subring $A_0\subseteq A$, the $A_0$-module generated by $x_1,x_2,\dots,x_r$ is open in $I^n/I^{n+1}$. Hence, for any sequence $\{y_j : j \geq 1\}$ in $M$ that tends to $0$, there exist elements $a_{ij} \in A$ for $1 \leq i \leq r$ and $j \geq 1$ such that 
    \[\lim_{j\to \infty}a_{ij}=0\]
    for all $i$ and
    \[y_j=\sum_{i=1}^ra_{ij}x_i\]
    for all $j$. This proves that $I^n/I^{n+1}\langle T\rangle$ is generated by $x_1,x_2,\dots,x_r$. By the induction hypothesis, any submodule of $I^n/I^{n+1}\langle T\rangle$ is finitely generated. Hence, for any ideal $J\subseteq A/I^{n+1}\langle T\rangle$, the sequence
    \[0\to J\cap I^{n}/I^{n+1}\to J\to \overline J\to 0,\]
    where $\overline{J}$ is the projection of $J$ in $A/I^n\langle T\rangle$, which implies that $J$ is finitely generated.
\end{proof}

As an important corollary, any artinian Tate ring is strongly noetherian. In particular, the ring $\Bpp{\alpha}(K)$, where $K$ is a perfectoid field over $\Q_p$ (See Subsection \ref{subsec: derham spec} for explicit definitions), is strongly noetherian.

\subsubsection*{The smoothoid case --} The definition of smoothoid spaces is introduced in \cite{heuer2022moduli}. In the paper, Heuer defined the module sheaf of differentials for a smoothoid space (over $\Q_p$) by using Hodge--Tate comparison. We will show that our definition coincides with his.

\begin{defi}[{\cite[Definition 2.2]{heuer2022moduli}}]
    An adic space $X$ is called \emph{smoothoid} if it admits an affinoid covering such that each of them admits a standard smooth map to a perfectoid affinoid space.
\end{defi}

Since locally on $X$, $X$ is isomorphic to the spectrum of a sous-perfectoid algebra, $X$ is \'etale sheafy. Hence, \'etale vector bundles are equivalent to analytic vector bundles.

Let $K$ be a perfectoid field over $\Q_p$. In \cite{heuer2022moduli}, Heuer defined the module of differentials for any smoothoid space over $K$ as follows.

\begin{defi}[{\cite[Definition 2.10]{heuer2022moduli}}]
    Let $X$ be a smoothoid space over $K$. Define the sheaf
    \[\widetilde{\Omega}_X:=R^1\nu_*\calO_{X_v},\]
    where $\calO_{X_v}$ is the structure sheaf of the $v$-topology of $X$ and $\nu$ is the push-forward from the $v$-topology to the \'etale topology.
\end{defi}

By \cite[Proposition 2.9]{heuer2022moduli}, $\widetilde{\Omega}_{X}$ is always a vector bundle.

\begin{theo}
    Let $X$ be a smoothoid space over $K$. Then the sheaf of differentials $ {\Omega}_{X/\Q_p}$ is isomorphic to the vector bundle $\widetilde{\Omega}_{X}\{-1\}$, where $\{-1\}$ is the Breuil--Kisin twist. 
\end{theo}

\begin{proof}
    Denote by $\bB^+_{\dR}$ the de Rham period sheaf on $X_v$. It admits a Fontaine's $\theta$-map
    \[\theta:\bB_\dR^+\to \calO_{X_v}.\]
    Consider the Bockstein differential $d_{B}:\calO_X\to \widetilde{\Omega}_X\{1\}$ associated to the sequence
    \[0\to \calO_{X_v}\{1\}\to \bB^+_{\dR}/\ker\theta^2\to \calO_{X_v}\to0.\]

    We claim that there exists a unique $\calO_X$-linear isomorphism
    \[I_X: \Omega_{X/\Q_p}\to \widetilde{\Omega}_{X}\{1\}\]
    that fits into the following commutative diagram:
    \[\begin{tikzcd}
        \calO_X\ar{rr}{d_B}\ar{dr}{d}&&\widetilde{\Omega}_X\{1\}\\
        & {\Omega}_{X/\Q_p}\ar{ur}{I_X}
    \end{tikzcd}\]
    where $d$ is the universal differential operator.

    We first check the uniqueness. It suffices to prove the statement analytically locally on $X$. Hence, we can assume that there exists a perfectoid Tate algebra $A$ over $K$ and $X=\Spa(R,R^+)$, where $R$ is standard smooth over $A$. We claim that for any complete adic module $M$ over $R$ and any continuous derivation $d: R \to M$, the derivation $d$ is $A$-linear, which is equivalent to $d(A) = 0$. Choose a bounded open submodule $M_0$ of $M$ such that $d(A^\circ)\subset M_0$. For any $a\in A^\circ$, there exists $a_1\in A^\circ$ such that $a_1^p-a\in pA^\circ$ and thus,
    \[d(a)\in pa_1^{p-1}d(a_1)+p\cdot d(A^\circ)\subseteq pM_0.\]
    By induction, we find that $d(A_0)\subseteq \bigcap_{n\geq 1} p^nM_0=0$. In particular, $ {\Omega}_{X/\Q_p}$ is generated by the image of $d$ as a sheaf of $\calO_X$-modules, and thus, the uniqueness follows.

    It remains to prove the existence of $I$. By the uniqueness of $I$, we only need to prove that such an $I$ exists analytically locally on $X$. Hence, we can assume that $X=\Spa(R,R^+)$ and that there exists a toric chart
    \[A\langle T_1^{\pm1},T_2^{\pm1},\dots,T_d^{\pm1}\rangle\to R\]
    in the sense of \cite[Definition 2.4]{heuer2022moduli}. Then, the claim is proved in the proof of \cite[Proposition 2.5]{Heuer_2022}.
\end{proof}

\section{Foundations of smooth adic spaces over $\Bp{\alpha}$}\label{Section: Foundation}

\subsection{The de Rham spectra of perfectoid spaces}\label{subsec: derham spec}

Let $(K,K^+)$ be a perfectoid Tate--Huber pair over $\Q_p$ and $\alpha\in\mathbb{Z}_{>0}$. Keep these notations until the end of this section.

\begin{defi}
    For any perfectoid Tate--Huber pair $(R,R^+)$ over $\Q_p$,
    define $\Bpp{\alpha}(R)$ as $\Bdrp{R}/\ker\theta^\alpha$. We emphasize that the topology on this ring is defined by the equality
    \[\Bpp{\alpha}(R)=\Ainf(R^+)/(\ker\theta)^{\alpha}[\frac{1}{p}],\]
    see the following remark for a detailed explanation.
    Moreover, we define $\Bpp{\alpha}^+(R^+)$ as the preimage of $R^+$ in $\Bpp{\alpha}(R)$ under the Fontaine's map $\theta$.

    Let $X$ be an \'etale sheafy adic space over $\Q_p$. We define $\Bpp{\alpha,X}$ as the $v$-sheaf
    \[\bB_{\dR,X}^+/\ker(\theta)^\alpha\]
    where $\bB_{\dR,X}^+$ is the de Rham period sheaf in the sense of \cite[Definition 6.1]{scholze2012padic}.
\end{defi}

\begin{rmk}
    We introduce the topology on $\Bpp{\alpha}(R)$. By definition, we have
    $$\Bpp{\alpha}(R)=\Ainf(R^+)/(\ker\theta)^\alpha\left[\tfrac{1}{p}\right].$$
    A topology is defined on $\Bpp{\alpha}(R)$ such that $\Ainf(R^+)/(\ker\theta)^\alpha$ is an open subring endowed with the $p$-adic topology.

    Furthermore, this topology is independent of the choice of $R^+$ (Corollary \ref{cor: good form of B alpha}).
\end{rmk}

\noindent\textbf{Notation: }{In the following of this section, we will use $\Bp{\alpha}$ (resp. $\Bp{\alpha}^+$, $\bdrp$, $\ainf$) to denote the ring $\Bpp{\alpha}(K)$ (resp. $\Bpp{\alpha}^+(K^+)$, $\Bdrp{K}$, $\Ainf(K^+)$).}

\begin{lemma}\label{lemma: topology on B alpha}
    Let $R$ be a perfectoid Tate ring over $\Q_p$ and $\vpi\in R^\flat$ be a pseudo-uniformizer. Let $R^+$ be an open bounded subring in $R$ which is integral perfectoid. Then, $\varpi$ lies in $R^{+,\flat}$ and the $p$-adic topology on $\Ainf(R^{+})/(\ker\theta)^\alpha$ is equal to the $[\vpi]$-adic topology on $\Ainf(R^{+})/(\ker\theta)^\alpha$ for any positive integer $\alpha$.
\end{lemma}

\begin{proof}
    Let $R^\circ$ be the subring of power bounded elements in $R$. Then the inclusion $R^{+}\to R^{\circ}$ is an almost isomorphism by \cite[Theorem 5.2]{scholze2012perfectoid}. In particular, $R^+$ contains all topologically nilpotent elements. Hence, $\varpi^\sharp\in R^+$.

    Note that the $[\vpi]$-adic topology does not depend on the choice of the pseudo-uniformizer $\vpi$. Hence, we can choose $\vpi$ so that $\xi=p+[\vpi]y$ is a generator of $\ker\theta$ for some $y\in\Ainf(R^+)$ (cf. \cite[lemma 6.2.8]{Scholze_2020}). Hence, in $\Ainf(R^+)/(\ker\theta)^\alpha$, we obtain the equation
    $$0 = \xi^\alpha = p^{\alpha} + [\vpi]y'$$
    for some $y'\in \Ainf(R^+)$,
    which implies that the $p$-adic topology is finer than the $[\vpi]$-adic topology.

    On the other hand, since $p$ itself is a pseudo-uniformizer of $R$, there exists some $z\in R^+$ and an integer $N>0$ such that
    $$\vpi^{\sharp N} = pz.$$
    Let $z_1$ be a preimage of $z$ in $\Ainf(R^+)$, then
    $$[\vpi]^N - pz_1 \in \xi\Ainf(R^+).$$
    Thus, $([\vpi]^N - pz_1)^\alpha = 0$ in $\Ainf(R^\circ)/(\ker\theta)^\alpha$, which is equivalent to saying that $[\vpi]^{N\alpha} \in p(\Ainf(R^{+})/(\ker\theta)^{\alpha})$. This proves that the $[\vpi]$-adic topology is finer than the $p$-adic topology.
\end{proof}

\begin{cor}\label{cor: good form of B alpha}
    Keep the notations in Lemma \ref{lemma: topology on B alpha}, we have
    $$\Bpp{\alpha}(R)=\Ainf(R^+)/(\ker\theta)^\alpha\big[\frac{1}{[\vpi]}\big].$$
    Furthermore, the topology on $\Bpp{\alpha}(R)$ does not depend on the choice of $R^+$.
\end{cor}

\begin{proof}
    The first claim is obvious. We only need to prove the second claim.
    
    Let $R^\circ$ be the subring of power bounded elements in $R$. It suffices to check that the topology defined by $R^+$ is equal to the topology defined by $R^\circ$. This follows from
    \[[\vpi]\Ainf(R^\circ)\subseteq \Ainf(R^+)\subseteq\Ainf(R^\circ)\]
    for any pseudo-uniformizer $\vpi\in R^\flat$.
\end{proof}

\begin{prop}\label{prop: etalebdrp}
    For $\alpha<\infty$, and any standard \'etale map $f:\Bp{\alpha}\to R$, the ring $\overline{R}=K\otimes_{B_\alpha}R$ is a perfectoid Tate ring. Moreover,
    there is a unique isomorphism of topological rings:
    $$i_R:R\cong \Bpp{\alpha}(\overline{R}),$$ such that the diagram
    $$\xymatrix{\Bp{\alpha}\ar[r]\ar[d]_-{f} &\Bpp{\alpha}(\overline{R})\ar[d]\\ R\ar[r]\ar[ur]^-{i_R} &\overline{R}}$$ commutes, where the upper row is the homomorphism induced by the functorality of $\Bpp{\alpha}(-)$, the lower row is the natural projection and the right column is the Fontaine's map.
\end{prop}

\begin{proof}
    Since $R$ is standard \'etale over $B_\alpha$, so is the morphism $K\to \overline R$. Hence, $\overline R$ is a perfectoid Tate ring.

    By the definition of standard \'etaleness, it suffices to consider the case when $f$ is finite \'etale or a rational localization. The finite \'etale case is trivial, and thus we only need to prove the claim when $f$ is a rational localization, say $R=\Bp{\alpha}\langle\tfrac{f_1,f_2,\dots, f_r}{g}\rangle$.

    Fix a generator $\xi$ of $\ker\theta$. By definition, the underlying space of $$\mathrm{Spa}(\Bp{\alpha},\Bp{\alpha}^+)$$ 
    is homeomorphic to $\mathrm{Spa}(K,K^+)$, and by analytic tilting correspondence, we can assume $f_i=[ \rf_i]$ and $g=[\rg]$ where $\rf_i$ and $\rg$ lies in $K^\flat$. Moreover, we may suppose $\rf_i$ and $\rg$ are in $K^{+,\flat}$ and $\rf_1=\vpi\in K^{+\flat}$ is a pseudo-uniformizer.

    Let $\xi\in W(K^{+\flat})$ be a generator of $\ker\theta$.

    By Corollary \ref{cor: good form of B alpha}, the rational localization is
    \begin{equation}\label{eq: section 2.3 derham localization commuate 1}
        W(K^{+\flat})/\xi^\alpha\left[\frac{[\rf_1],[\rf_2],\dots, [\rf_r]}{[\rg]}\right]^{\land}\left[\frac{1}{[\vpi]}\right]
    \end{equation} 
    where the completion is the $[\vpi]$-adic completion. Note that in the ring $W(K^{+\flat})[\frac{1}{[\rg]}]$, $$[\vpi]\frac{[\rf_i^{\frac{1}{p^l}}]}{[\rg^{\frac{1}{p^l}}]}=\frac{[\rf_1]}{[\rg]}[\rg^{\frac{p^l-1}{p^l}}][\rf_i^{\frac{1}{p^l}}].$$ 
    Hence, as subrings of $W(K^{+\flat})[\frac{1}{[\rg]}]$, we have 
    \[[\vpi]\cdot\left(\bigcup_{l\geq1}W(K^{+\flat})/\xi^\alpha\big{[}\frac{[\large\rf_1^{\frac{1}{p^l}}],[\rf_2^{\frac{1}{p^l}}],\dots, [\rf_r^{\frac{1}{p^l}}]}{[\rg^{\frac{1}{p^l}}]}\big]\right)\subseteq W(K^{+\flat})/\xi^\alpha\left[\frac{[\rf_1],[\rf_2],\dots, [\rf_r]}{[\rg]}\right].\]
    Use Corollary \ref{cor: good form of B alpha} again, the ring (\ref{eq: section 2.3 derham localization commuate 1})
    is equal to $$\left(\varprojlim_{m}\ \varinjlim_{l}W(K^{+\flat})/(p^m,\xi^\alpha)\left[\frac{[\large\rf_1^{\normalsize\frac{1}{p^l}}],[\large{\rf}_2^{\normalsize\frac{1}{p^l}}],\dots, [\large\rf_r^{\normalsize\frac{1}{p^l}}]}{[\large\rg^{\normalsize\frac{1}{p^l}}]}\right]\right)\left[\frac{1}{p}\right].$$

    We claim that
    $$\varinjlim_{l}\ W(K^{+\flat})/p^m\left[\frac{[\rf_1^{\frac{1}{p^l}}],[f_2'^{\frac{1}{p^l}}],\dots, [\rf_r^{\frac{1}{p^l}}]}{[\rg^{\frac{1}{p^l}}]}\right]\cong W\left(\varinjlim_{l}\ K^{+\flat}\left[\frac{\rf_1^{\frac{1}{p^l}},\rf_2^{\frac{1}{p^l}},\dots, \rf_r^{\frac{1}{p^l}}}{\rg^{\frac{1}{p^l}}}\right]\right)/p^m.$$ 
    In fact, the left hand side is a subring of 
    $$W(K^{+\flat})/p^m\left[\tfrac{1}{[g^{\flat}]}\right]$$ 
    and by the universal property of Witt vectors of a perfect ring, $$W(K^{+\flat})/p^m\left[\tfrac{1}{[g^{\flat}]}\right]\cong W\left(K^{+\flat}\left[\tfrac{1}{g^{\flat}}\right]\right)/p^m.$$ 
    By definition, 
    $$\varinjlim_{l}\ K^{+\flat}\left[\frac{\rf_1^{\frac{1}{p^l}},\rf_2^{\frac{1}{p^l}},\dots, \rf_r^{\frac{1}{p^l}}}{\rg^{\frac{1}{p^l}}}\right]$$ 
    is a perfect subring of $K^{+\flat}\left[\tfrac{1}{g^{\flat}}\right]$. Thus $$W\left(\varinjlim_{l} K^{+\flat}\left[\frac{\rf_1^{\frac{1}{p^l}},\rf_2^{\frac{1}{p^l}},\dots, \rf_r^{\frac{1}{p^l}}}{\rg^{\frac{1}{p^l}}}\right]\right)/p^m$$ 
    is a subring of $W\left(K^{+\flat}\left[\tfrac{1}{g^{\flat}}\right]\right)/p^m$. This proves that the natural homomorphism $$\varinjlim_{l}\ W(K^{+\flat})/p^m\left[\frac{[\rf_1^{\frac{1}{p^l}}],[\rf_2^{\frac{1}{p^l}}],\dots, [\rf_r^{\frac{1}{p^l}}]}{[\rg^{\frac{1}{p^l}}]}\right]\to W\left(\varinjlim_{l}\ K^{+\flat}\left[\frac{\rf_1^{\frac{1}{p^l}},\rf_2^{\frac{1}{p^l}},\dots, \rf_r^{\frac{1}{p^l}}}{\rg^{\frac{1}{p^l}}}\right]\right)/p^m$$ 
    is injective. Every element of $W\left(\varinjlim_{l} K^{+\flat}\left[\frac{\rf_1^{\frac{1}{p^l}},\rf_2^{\frac{1}{p^l}},\dots, \rf_r^{\frac{1}{p^l}}}{\rg^{\frac{1}{p^l}}}\right]\right)/p^m$ has a Teichmuller representation
    $$\sum_{i=0}^{m-1}[\mu_i]p^i,$$ so there exists an $l$ such that $$\mu_i\in K^{+\flat}\left[\tfrac{\rf_1^{\tfrac{1}{p^l}},\rf_2^{\tfrac{1}{p^l}},\dots, \rf_r^{\tfrac{1}{p^l}}}{\rg^{\tfrac{1}{p^l}}}\right]$$ for all $\mu_i$. This proves the natural homomorphism is surjective.

    Thus,
    \begin{align*}
        &\left(\varprojlim_{m}\varinjlim_{l}W(K^{+\flat})/(p^m,\xi^\alpha)\left[\frac{[\rf_1^{\frac{1}{p^l}}],[\rf_2^{\frac{1}{p^l}}],\dots, [\rf_r^{\frac{1}{p^l}}]}{[\rg^{\frac{1}{p^l}}]}\right]\right)\left[\frac{1}{p}\right]\\&\cong \left(\varprojlim_{m}W\left(\varinjlim_{l} K^{+\flat}\left[\frac{\rf_1^{\frac{1}{p^l}},\rf_2^{\frac{1}{p^l}},\dots, \rf_r^{\frac{1}{p^l}}}{\rg^{\frac{1}{p^l}}}\right]\right)/(p^m,\xi^\alpha)\right)\left[\frac{1}{p}\right]
    \end{align*}
    The right-hand-side is just the $\Bpp{\alpha}$ of the rational localisation.
\end{proof}

\begin{cor}\label{etshperfd}
    The Tate--Huber pair $\left(\Bp{\alpha},\Bp{\alpha}^+\right)$ is \'etale sheafy.
\end{cor}

\begin{proof}
    Since $\Bp{\alpha}\to K$ is a nilpotent thinkening, the standard \'etale topology of $\left(\Bp{\alpha},\Bp{\alpha}^+\right)$ is equal to $(K,K^+)$. The previous theorem implies that the structure presheaf is the sheaf $\Bpp{\alpha}$. In particular, the structural presheaf a sheaf.
\end{proof}

\begin{prop}
    For any perfectoid space $X$ over $\Q_p$, $R^1\nu_*\mathrm{GL}_n(\Bpp{\alpha})=1$.
\end{prop}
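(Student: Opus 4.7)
The plan is to induct on $\alpha$. Since the assertion is étale-local on $X$, I may assume $X=\mathrm{Spa}(A,A^+)$ is affinoid perfectoid. The base case $\alpha=1$ reduces to $R^1\nu_*\mathrm{GL}_n(\hat{\mathcal{O}})=1$ on perfectoid spaces, a known consequence of the Kedlaya--Liu/Scholze vector-bundle theory: pro-étale $\hat{\mathcal{O}}$-vector bundles arise from finite projective $A$-modules, and the latter are Zariski-locally free.

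For the inductive step ($\alpha\geq 2$), I would use the short exact sequence of pro-étale sheaves of groups
$$1 \longrightarrow 1+t^{\alpha-1}M_n(\hat{\mathcal{O}}) \longrightarrow \mathrm{GL}_n(\Bpp{\alpha}) \longrightarrow \mathrm{GL}_n(\Bpp{\alpha-1}) \longrightarrow 1.$$
Surjectivity holds because invertibility modulo a nilpotent ideal lifts to invertibility, and the kernel is identified with the additive sheaf $M_n(\hat{\mathcal{O}})$ via $A\mapsto I+t^{\alpha-1}A$, using $t^{2(\alpha-1)}=0$ in $\Bpp{\alpha}$ to verify the group law. Given a class $[\mathcal{T}]\in H^1_\pet(U,\mathrm{GL}_n(\Bpp{\alpha}))$ on an étale $U\to X$, the inductive hypothesis trivializes its image in $H^1_\pet(U,\mathrm{GL}_n(\Bpp{\alpha-1}))$ on some étale refinement $V\to U$. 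On $V$, the nonabelian cohomology exact sequence forces $[\mathcal{T}|_V]$ to lie in the image of $H^1_\pet(V,\hat{\mathcal{O}})^{\oplus n^2}$, which is étale-locally zero by Scholze's acyclicity of $\hat{\mathcal{O}}$ on affinoid perfectoids. Hence $[\mathcal{T}]$ becomes trivial after one more étale refinement.

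The main obstacle, in my view, is less in the inductive mechanism than in invoking correctly the two foundational inputs, the Kedlaya--Liu descent of pro-étale $\hat{\mathcal{O}}$-vector bundles and Scholze's acyclicity of $\hat{\mathcal{O}}$ on affinoid perfectoids; both are nontrivial theorems that make the whole scheme work. A smaller technical point is verifying that the kernel $1+t^{\alpha-1}M_n(\hat{\mathcal{O}})$ is normal in $\mathrm{GL}_n(\Bpp{\alpha})$: conjugation of $I+t^{\alpha-1}A$ by any $g\in\mathrm{GL}_n(\Bpp{\alpha})$ equals $I+t^{\alpha-1}\bar{g}A\bar{g}^{-1}$, where $\bar{g}$ is the reduction of $g$ mod $t$, and this again lies in the kernel. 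The extension to $\alpha=\infty$, if desired, would follow by an inverse-limit argument applied to the finite-$\alpha$ results.
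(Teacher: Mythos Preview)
Your argument is correct and matches the paper's proof essentially verbatim: induction on $\alpha$, the base case $\alpha=1$ being the known descent/acyclicity result for $\hat{\mathcal{O}}$-bundles on perfectoid spaces, and the inductive step via the exact sequence $0\to \hat{\mathcal{O}}^{n^2}\xrightarrow{1+t^{\alpha-1}(-)}\mathrm{GL}_n(\Bpp{\alpha})\to \mathrm{GL}_n(\Bpp{\alpha-1})\to 1$ together with the vanishing of $R^1\nu_*\hat{\mathcal{O}}$. You have simply supplied more detail (normality of the kernel, explicit appeal to Scholze's acyclicity) than the paper, which compresses the inductive step into a single sentence.
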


\begin{proof}
    When $\alpha<\infty$, 
    we prove it by induction on $\alpha$. The case $\alpha=1$ is well known as so called `almost \'etale descent', see \cite{scholze2012perfectoid} for the almost \'etale morphism between perfectoid spaces and \cite{berger2008familles} Chapter $3$ for the proof to almost \'etale descent.

    Assume the statement is already proved for $\alpha-1$. For $\alpha$, choose a generator $\xi$ of $\ker\theta$ and consider the following exact sequence
    $$0\to \widehat\calO_{X_\pet}^{n^2}=\mathrm{M}_n(\widehat\calO_{X_\pet})\xrightarrow{1+\xi^{\alpha-1}\cdot \mathrm{id}} \mathrm{GL}_{n}(\Bpp{\alpha})\to \mathrm{GL}_n(\Bpp{\alpha-1})\to 1.$$
    The proposition follows by taking the long exact sequence.
\end{proof}

\subsection{Properties of algebras with a toric chart}\label{toricalg}

In this subsection, we fix a generator $\xi$ of $\ker\big(\theta:\bdrp\to K\big)$. We emphasize that $\xi$ is not necessarily an element of $\ainf$.

\begin{defi}
    The \emph{toric algebra} of dimension $d$ over $\Bp{\alpha}$ is the topological algebra \[\Bp{\alpha}\langle T_1^{\pm1},T_2^{\pm1},\dots,T^{\pm1}_d\rangle,\] which we will denote by $\T$ in the sequel. We also define
    $$\T^+:=\Bp{\alpha}^+\langle T_1^{\pm1},T_2^{\pm1},\dots,T^{\pm1}_d\rangle.$$
\end{defi}

Moreover, we set $\overline{\T}=K\otimes_{\Bp{\alpha}} \T=K\langle T_1^{\pm1},T_2^{\pm1},\dots,T^{\pm1}_d\rangle$. For each positive integer $n$, define
$$\T_n=\Bp{\alpha}\langle T_1^{\pm\tfrac{1}{p^n}},T_2^{\pm\tfrac{1}{p^n}},\dots,T^{\pm\tfrac{1}{p^n}}_d\rangle;$$
$$\overline{\T}_n=\Bp{\alpha}\langle T_1^{\pm\tfrac{1}{p^n}},T_2^{\pm\tfrac{1}{p^n}},\dots,T^{\pm\tfrac{1}{p^n}}_d\rangle\otimes_{\Bp{\alpha}}K=K\langle T_1^{\pm\tfrac{1}{p^n}},T_2^{\pm\tfrac{1}{p^n}},\dots,T^{\pm\tfrac{1}{p^n}}_d\rangle.$$
Define $\T^+_n$ as
\[B_\alpha^+\langle T_1^{\pm\tfrac{1}{p^n}},T_2^{\pm\tfrac{1}{p^n}},\dots,T^{\pm\tfrac{1}{p^n}}_d\rangle\]
and $\overline{\T}^+_n$ as the image of $\T_n^+$ in $\overline{\T}_n$.

Let $\overline{\T}_{\infty}=\varinjlim_n\overline{\T}_n$, $\T_{\infty}=\varinjlim_n \T_n$, and let
$$\widehat{\overline{\T}}_{\infty}=K\langle T_1^{\pm\tfrac{1}{p^\infty}},T_2^{\pm\tfrac{1}{p^\infty}},\dots,T^{\pm\tfrac{1}{p^\infty}}_d\rangle\sim\varinjlim_n\overline{\T}_n$$
be the unique perfectoid tilde limit. Finally, define
$$\widehat{\T}_\infty=\Bpp{\alpha}(\widehat{\overline{\T}}_{\infty}).$$
We emphasize that $\widehat{\T}_\infty$ is not a completion of $\T_\infty$. However, we will prove that $\T_{\infty}$ can be embedded into $\widehat{\T}_\infty$ as a dense subalgebra.

In what follows, we define $\overline{T}_i^{\flat}$ to be the tilting element $(T_i, T_i^{\tfrac{1}{p}}, T_i^{\tfrac{1}{p^2}}, \dots) \in \widehat{\overline{\T}}_\infty^\flat$.

\begin{prop}\label{prop: Ainf torus}
    We have
    $$\Ainf(\widehat{\overline{\T}}_\infty^+)=\left(\bigcup_{l\geq0}\ainf\big[[(\overline{T}_i^{\flat})^{\frac{\pm1}{p^l}}]:i=1,2,\dots, d\big]\right)^\land,$$
    where the right-hand side denotes the $(p,\ker\theta)$-adic completion of the union of all sub-$\Ainf$-algebras generated by
    $$\{[(\overline{T}_i^{\flat})^{\frac{1}{p^l}}]:i=1,2,\dots, d\},$$
    which is a polynomial algebra.
\end{prop}

\begin{proof}
    Both rings are $p$-adically complete and $p$-torsion free. It suffices to check that $\Ainf(\widehat{\overline{\T}}_\infty^+)/p$ and $\left(\bigcup_{l\geq0}\Ainf(K^+)\big[[(\overline{T}_i^{\flat})^{\frac{1}{p^l}}]:i=1,2,\dots, d\big]\right)^\land/p$ are isomorphic as perfect rings, which is clear from the construction.
\end{proof}

We now explain how to embed $\T_\infty$ into $\widehat{\T}_\infty$.

\begin{theo}\label{theorem: toric normal trace}
    (1) For each $n$, there exists a closed immersion of topological $\Bp{\alpha}$-algebras
    $$\T_n\hookrightarrow \widehat{\T}_\infty$$
    sending $T_{i}^{\frac{1}{p^n}}$ to $[(\overline{T}_i^{\flat})^{\frac{1}{p^n}}]$. This embedding makes $\T_\infty$ a dense subalgebra of $\widehat{\T}_\infty$.

    (2) The normalized trace map $R_n: \T_\infty \to \T_n$ (that is, the colimit of $\tfrac{1}{p^{(m-n)d}} \mathrm{Tr}_{\T_m/\T_n}$ on $\T_m$) can be continuously extended to $\widehat{\T}_\infty$, and we still denote this extension by $R_n$.
\end{theo}

\begin{proof}
    This follows directly from Proposition \ref{prop: Ainf torus}.
\end{proof}

Consequently, if we denote by $\X_n(\T)$ the kernel of $R_n$, then we have a direct sum decomposition of topological $\T_n$-modules:
    $$\widehat{\T}_\infty = \T_n \oplus \X_n(\T).$$

Let $(A, A^+)$ be a Tate--Huber pair over $\Bp{\alpha}$ and let $f: (\T, \T^+) \to (A, A^+)$ be a standard \'etale homomorphism. The above constructions for toric algebras can be generalized to $A$.

\begin{defi}
    We maintain the notations above and define the following:
\begin{enumerate}[(1)]
    \item $A_n = \T_n \otimes_{\T} A$; this coincides with the completed tensor product since $\T_n$ is finite \'etale over $\T$;
    \item $\overline{A}_n = A_n \otimes_{\Bp{\alpha}} K$;
    \item $A_\infty = \varinjlim_{n \geq 1} A_n$;
    \item $\overline{A}_\infty = \varinjlim_{n \geq 1} \overline{A}_n = A_\infty \otimes_{\Bp{\alpha}} K$;
    \item $\widehat{\overline{A}}_\infty$ is the perfectoid tilde limit of $\overline{A}_n$, i.e., $\widehat{\overline{A}}_\infty = \widehat{\overline{\T}}_\infty \widehat{\otimes}_{\T} A$;
    \item $\widehat{A}_\infty = \Bpp{\alpha}(\widehat{\overline{A}}_\infty, \widehat{\overline{A}}_\infty^+)$.
\end{enumerate}
\end{defi}

\begin{lemma}\label{lemma: tensor formula of toric tower}
    Recall from Theorem \ref{theorem: toric normal trace} that $\T$ is a closed subalgebra of $\widehat{\T}_\infty$. Then there exists a unique isomorphism of topological algebras
    $$\widehat{\T}_\infty \widehat{\otimes}_\T A \cong \widehat{A}_\infty$$
    satisfying the following conditions:
    \begin{enumerate}[(1)]
        \item The restriction to $\widehat{\T}_\infty$ is induced by the functoriality of $\Bpp{\alpha}(-)$;
        \item After reduction modulo $\xi$, it recovers the natural isomorphism $\widehat{\overline{\T}}_\infty \widehat{\otimes}_{\overline{\T}} A \cong \widehat{\overline{A}}_\infty$.
    \end{enumerate}
\end{lemma}

\begin{proof}
    By Proposition \ref{prop: etalebdrp}, the homomorphism $\widehat{\T}_\infty \to \widehat{A}_\infty$ is standard \'etale. Hence, the lemma is a direct consequence of Theorem \ref{theo: formally etale of standard etale}.
\end{proof}

\begin{cor}\label{cor: direct sum toric algebras}
    For each $n > 0$, there is a natural isomorphism of topological $A_n$-modules
    $$\widehat{A}_\infty \cong A_n \oplus \X_n(A),$$
    where $\X_n(A) = A \widehat{\otimes}_\T \X_n(\T)$ (cf. Theorem \ref{theorem: toric normal trace}).

    The inclusion $A_n \to A_n \oplus \X_n(A) \cong \widehat{A}_\infty$ is given by
    $$A_n \to A_n \widehat{\otimes}_{\T_n} \widehat{\T}_{\infty} \cong \widehat{A}_\infty,$$
    and the projection $\widehat{A}_\infty \cong A_n \oplus \X_n(A) \to A_n$ is given by
    $$\widehat{A}_\infty \cong A_n \widehat{\otimes}_{\T_n} \widehat{\T}_\infty \xrightarrow{\mathrm{id} \widehat{\otimes} R_n} A_n.$$
\end{cor}

\begin{proof}
    This follows directly from Lemma \ref{lemma: tensor formula of toric tower} and Theorem \ref{theorem: toric normal trace}.
\end{proof}

We note that this corollary implies that the normalized trace $R_n: A_\infty \to A_n$ can be continuously extended to $\widehat{A}_\infty$. We will further show that these maps $R_n$ are uniformly continuous.

\begin{cor}
    For each $0\leq a<\alpha$, $t^a \widehat{A}_\infty\cap A=t^aA$ and the multiplication by $t$ map $t^aA/t^{a+1}A\to t^{a+1}A/t^{a+2}A$ is an isomorphism when $a<\alpha-1$.
\end{cor}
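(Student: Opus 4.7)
The plan is to reduce both claims to the projector $R_n\colon \hat{A}_\infty \to A_n$ supplied by Corollary \ref{dsum} together with descent along the finite \'etale Galois cover $A \to A_n$.

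For the intersection identity $t^a \hat{A}_\infty \cap A = t^a A$, fix any $n \geq 1$ and suppose $x \in A$ satisfies $x = t^a y$ for some $y \in \hat{A}_\infty$. Since $R_n$ is $A_n$-linear (in particular $t$-linear) and acts as the identity on $A_n \supset A$, applying $R_n$ gives $x = R_n(x) = t^a R_n(y) \in t^a A_n$. Now $A \to A_n$ is the base change along $\T \to A$ of the finite \'etale Galois cover $\T \to \T_n$ with group $G = (\mathbb{Z}/p^n)^d$, hence is itself Galois with the same group, and by faithfully flat descent $A_n^G = A$. Writing $x = t^a y'$ with $y' \in A_n$, and noting that $G$ fixes $t$ (as $t \in \Bpp{\alpha}(K,K^+) \subset A$) while $|G| = p^{dn}$ is invertible in $A$, I may average:
$$x \;=\; \tfrac{1}{|G|}\sum_{g \in G} g(x) \;=\; t^a \cdot \tfrac{1}{|G|}\sum_{g \in G} g(y'),$$
with the averaged element in $A_n^G = A$. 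This proves $x \in t^a A$.

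For the second claim, surjectivity is immediate. For injectivity, suppose $t^{a+1}x = t^{a+2}z$ with $z \in A$; it suffices to show that $w := x - tz$ satisfies $t^a w \in t^{a+1}A$. By construction $t^{a+1}w = 0$ in $A$, hence also in $\hat{A}_\infty = \Bpp{\alpha}(\hat{\bar{A}}_\infty, \hat{\bar{A}}_\infty^+)$. Because $\B$ of a perfectoid algebra is $t$-torsion free, the kernel of multiplication by $t^{a+1}$ on $\hat{A}_\infty$ is exactly $t^{\alpha - a - 1}\hat{A}_\infty$, so $w \in t^{\alpha - a - 1}\hat{A}_\infty$. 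Applying the first claim with exponent $\alpha - a - 1 < \alpha$, we deduce $w \in t^{\alpha - a - 1}A$, and therefore $t^a w \in t^{\alpha - 1}A$. The hypothesis $a < \alpha - 1$ gives $a+1 \leq \alpha - 1$, so $t^{\alpha - 1}A \subset t^{a+1}A$ as required.

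The main obstacle is the descent step in the first claim, namely passing from $x \in t^a A_n$ back to $x \in t^a A$; without this, the projector $R_n$ only lands inside a finite \'etale cover. Once that is in place, the kernel of $t^{a+1}$ on $\hat{A}_\infty$ is pinned down by the $t$-torsion-freeness of $\B$ on perfectoids, and the second claim becomes a short consequence of the first.
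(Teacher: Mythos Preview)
Your proof is correct and is essentially the paper's argument made explicit: the paper simply says ``$A$ is a direct summand of $\hat{A}_\infty$,'' and your composite of $R_n$ with Galois averaging is exactly the retraction $R_0\colon \hat{A}_\infty \to A$ (the normalised trace onto $A$) that realises this, after which both claims drop out. The descent step you flag as the main obstacle is thus just the construction of $R_0$ from the $n>0$ case of Corollary~\ref{dsum}, and your kernel computation for the second claim is one natural way to unpack the paper's one-line proof.
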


\begin{proof}
    Applying Corollary \ref{cor: direct sum toric algebras} to the case $n=1$ and $a<\alpha$, we see that $A/\xi^a$ is a direct summand of $\Bpp{a}(\widehat{\overline{A}}_\infty)$ (as $A$-modules). In particular, $t^a\widehat{A}_\infty\cap A=t^aA$.

    For the second claim, it suffices to show that the multiplication-by-$t$ map
    $$t^a\widehat{A}_\infty/t^{a+1}\widehat{A}_\infty\to t^{a+1}\widehat{A}_\infty/t^{a+2}\widehat{A}_\infty$$
    is an isomorphism. This follows from Proposition \ref{prop: etalebdrp}.
\end{proof}

\begin{theo}
    The Tate--Huber pair $(A,A^+)$ is strongly sous-\'etale sheafy (cf. Definition \ref{definition: sous-etale sheafy}). In particular, $(A,A^+)$ is \'etale sheafy.
\end{theo}

\begin{proof}
    Let $U_1,U_2,\dots,U_m$ be $m$ free variables. We need to prove that 
    $$\widetilde{A}:=A\langle U_1,U_2,\dots,U_m\rangle$$
    is sous-\'etale sheafy.

    Set $\overline{\widetilde{A}}=\widetilde{A}/\xi=\overline{A}\langle U_1,U_2,\dots,U_m\rangle$ and $\overline{\widetilde{A}}_\infty=\overline{A}_\infty\langle U_1^{\tfrac{1}{p^\infty}},U_2^{\tfrac{1}{p^\infty}},\dots,U_m^{\tfrac{1}{p^\infty}}\rangle$ (a perfectoid algebra).

    By the proof of Proposition \ref{prop: Ainf torus},
    $$\Ainf\left(\overline{A}^+_\infty\langle U_1^{\tfrac{1}{p^\infty}},U_2^{\tfrac{1}{p^\infty}},\dots,U_m^{\tfrac{1}{p^\infty}}\rangle\right)=\left(\bigcup_{l\geq0}\Ainf(\overline{A}^+_\infty)\left[[(U_i^{\flat})^{\tfrac{1}{p^l}}]:i=1,2,\dots, m\right]\right)^\land$$
    where $U_i^{\flat}=(U_i,U_i^{\tfrac{1}{p}},U_{i}^{\tfrac{1}{p^2}},\dots)$ and the completion is the $p$-adic completion. This shows that $\widehat{A}_\infty\langle U_1,U_2,\dots,U_m\rangle$ is a direct summand of $\Bpp{\alpha}(\overline{\widetilde{A}}_\infty)$ (see the proof of Corollary \ref{cor: direct sum toric algebras}). By Corollary \ref{cor: direct sum toric algebras}, $\widetilde{A}$ is a direct summand of $\widehat{A}_\infty\langle U_1,U_2,\dots,U_m\rangle$. Thus, $\widetilde{A}$ is a direct summand of $\Bpp{\alpha}(\overline{\widetilde{A}}_\infty)$, and the theorem follows from Corollary \ref{etshperfd}.
\end{proof}

We summarize the above consequences in the following diagram.

$$\xymatrix{ &\widehat{A}_\infty\ar@(r,u)[ddrr]^-{R_n} & & & \\ \widehat{\T}_{\infty}\ar@(d,l)[ddrr]_-{R_n}\ar[ur] & &A_\infty\ar[ul] & & \\ &\T_{\infty}\ar[ul]\ar[ur] & &A_n\ar[ul] & \\ & &\T_n\ar[ur]\ar[ul] & &A\ar[ul] \\ & & &\T\ar[ul]\ar[ur] & .}$$

\subsection{Canonical actions, locally analytic vectors and Sen theory of algebras with a toric chart}\label{ToricSentheory}

We discuss the decompletion theory of algebras over $B_\alpha$ with a toric chart in this subsection. Recall that we use $\Q_p^\cyc$ to denote the completion of the extension of $\Q_p$ by adding all $p$-power roots of unity. In this subsection, we assume that $K$ is defined over $\Q_p^\cyc$.

For each $n$, fix a primitive $p^n$-th root of unity $\zeta_{p^n}$ such that $\zeta_{p^{n+1}}^p=\zeta_{p^n}$. Let $\epsilon=(1,\zeta_p,\zeta_{p^2},\dots)\in K^\flat$ and $t=\log[\epsilon]\in\bdrp$.

\begin{rmk}
    We emphasize that each $\zeta_{p^n}$ in $K$ can be uniquely lifted to a primitive $p^n$-th root in $\bdrp$ since $(\bdrp,t)$ is a henselian pair. This lifting is still denoted by $\zeta_{p^n}$.
\end{rmk}

Let $\Gamma=\mathrm{Gal}(A_\infty/A)\cong \mathbb{Z}_p^d$, where the $i$-th basis element $\gamma_i$ acts on $A_\infty$ such that
$$\gamma_i(T_j^{\tfrac{1}{p^n}})=\begin{cases}
\zeta_{p^n}T_j^{\tfrac{1}{p^n}} & \text{if } i=j, \\
T_j^{\tfrac{1}{p^n}} & \text{if } i\neq j.
\end{cases}$$
This action naturally descends to $\overline{A}_\infty$ and extends continuously to $\widehat{\overline{A}}_\infty$. By the functoriality of $\Bpp{\alpha}(-)$, $\Gamma$ acts on $\widehat{A}_\infty$.

\begin{theo}
    The inclusion $A_n\subseteq \widehat{A}_\infty$ makes $A_n$ a $\Gamma$-invariant subalgebra.
\end{theo}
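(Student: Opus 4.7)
The plan is to reduce to the toric case using the identification $\hat{A}_\infty \cong \hat{\T}_\infty \hat{\otimes}_\T A$ from the preceding lemma, and then compute the action of $\Gamma$ explicitly on the Teichmuller lifts that generate $\hat{\T}_\infty$. Under that identification, $A_n = \T_n \otimes_\T A$ sits in $\hat{A}_\infty$ as the image of $\T_n \otimes_\T A$ in $\hat{\T}_\infty \hat{\otimes}_\T A$. Since $\Gamma$ fixes $\bar{A}$ inside $\hat{\bar{A}}_\infty \cong \hat{\bar{\T}}_\infty \hat{\otimes}_{\bar{\T}} \bar{A}$, the action on $\hat{\bar{A}}_\infty$ is the tensor-product action $\gamma \otimes \mathrm{id}$; applying proposition \ref{etalebdrp} to lift through $\Bpp{\alpha}$, the induced functorial $\Gamma$-action on $\hat{A}_\infty$ likewise factors as $\gamma \otimes \mathrm{id}$ on $\hat{\T}_\infty \hat{\otimes}_\T A$. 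This reduces the theorem to showing $\gamma(\T_n) \subset \T_n$ inside $\hat{\T}_\infty$ for every $\gamma \in \Gamma$.

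For this toric reduction, I would use the Witt-vector description of $\hat{\T}_\infty$: the closed immersion $\T_n \hookrightarrow \hat{\T}_\infty$ sends the formal variable $T_i^{1/p^n}$ to the Teichmuller class $[\bar{T}_i^{\flat,1/p^n}]$ modulo $t^\alpha$, where $\bar{T}_i^{\flat,1/p^n} = (T_i^{1/p^n}, T_i^{1/p^{n+1}}, \ldots) \in \hat{\bar{\T}}_\infty^{\flat}$. A direct calculation in the tilt, using $\gamma_i(T_j^{1/p^m}) = \zeta_{p^m}^{\delta_{ij}} T_j^{1/p^m}$ and $\epsilon^{1/p^n} = (\zeta_{p^n}, \zeta_{p^{n+1}}, \ldots)$, yields $\gamma_i(\bar{T}_j^{\flat,1/p^n}) = \epsilon^{\delta_{ij}/p^n} \bar{T}_j^{\flat,1/p^n}$. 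Applying the Teichmuller map and reducing mod $t^\alpha$,
$$\gamma_i(T_j^{1/p^n}) = [\epsilon^{\delta_{ij}/p^n}] \cdot T_j^{1/p^n} \in \hat{\T}_\infty.$$
Since $[\epsilon^{1/p^n}]$ lies in $\Bpp{\alpha}(K,K^+) \subset \T_n$, each $\gamma_i$ sends the topological generators of $\T_n$ to elements of $\T_n$, and by continuity $\gamma_i(\T_n) \subset \T_n$.

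I expect the main obstacle to be the first step, namely checking that the functorial $\Gamma$-action on $\hat{A}_\infty = \Bpp{\alpha}(\hat{\bar{A}}_\infty)$ really corresponds to $\gamma \otimes \mathrm{id}$ under the lemma's isomorphism. The right tool is the uniqueness clause in proposition \ref{etalebdrp}: the isomorphism there is pinned down by its reduction mod $t$, and because both the functorial action and $\gamma \otimes \mathrm{id}$ reduce to the same action on $\hat{\bar{A}}_\infty$, uniqueness forces them to coincide. Once this compatibility is secured, the remainder is the tilt calculation sketched above. A secondary subtlety worth emphasising is that the induced action on $A_n$ is \emph{not} literally the formal action by $\zeta_{p^n}$ but rather by its Teichmuller avatar $[\epsilon^{1/p^n}]$; what saves the day is that this avatar already lives in the scalar subring $\Bpp{\alpha}(K,K^+)$, so $A_n$-stability is preserved.
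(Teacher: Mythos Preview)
Your toric computation is correct and matches the ``easy calculation'' the paper alludes to. The gap is in the reduction step. You claim the functorial $\Gamma$-action on $\hat{A}_\infty$ corresponds to $\gamma\otimes\mathrm{id}$ on $\hat{\T}_\infty\hat{\otimes}_\T A$, but this expression is not well-defined: for $\gamma\otimes\mathrm{id}$ to give an endomorphism of a $\T$-balanced tensor product, the map $\gamma$ on $\hat{\T}_\infty$ must be $\T$-linear, i.e.\ fix the image of $\T$. Your own calculation shows it does not: already for $n=0$ one has $\gamma_i(T_i)=[\epsilon]\,T_i$, and $[\epsilon]=\exp(t)\equiv 1+t\pmod{t^2}$ is not $1$ in $\Bpp{\alpha}(K,K^+)$ once $\alpha>1$. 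This is precisely the phenomenon you flag at the end (the action is by $[\epsilon^{1/p^n}]$ rather than by $\zeta_{p^n}$), but it breaks the tensor factorisation before it helps. The uniqueness clause in Proposition~\ref{etalebdrp} cannot rescue this, since there is no well-defined second map to which the functorial action could be compared.

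The paper circumvents this by a formal \'etaleness argument that avoids any tensor factorisation. Since $\T_n\to A_n$ is \'etale and $A_n\to\bar{A}_n$ has nilpotent kernel, there is a \emph{unique} $\Bpp{\alpha}(K,K^+)$-algebra automorphism of $A_n$ that restricts to $\gamma|_{\T_n}$ (your toric computation) and reduces modulo $t$ to the Galois action on $\bar{A}_n$. Applying the same lifting uniqueness to the nilpotent thickening $\hat{A}_\infty\to\hat{\bar{A}}_\infty$ shows that the composite $A_n\hookrightarrow\hat{A}_\infty\xrightarrow{\gamma}\hat{A}_\infty$ must agree with this automorphism followed by the inclusion, hence lands in $A_n$. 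Your argument can be repaired along exactly these lines; the toric step then serves as the input for the \'etale lift rather than as the endpoint of a tensor reduction.
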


\begin{proof}
    When $f$ is an isomorphism, the claim follows from a direct calculation. In the general case, we use a formal \'etaleness argument. Specifically, for any $\gamma\in\Gamma$, there exists a unique isomorphism of $\Bp{\alpha}$-algebras $g_{\gamma}:A_n\to A_n$ lifting the Galois action of $\gamma$ on $\overline{A}_n$ that makes the following diagram commute:
    $$\xymatrix{ \T_n\ar[d]_{\gamma}\ar[r] & A_n\ar[d]^{g_\gamma} \\ \T_n\ar[r] & A_n.}$$
    Let $i_{\gamma}:A_n\to\widehat{A}_\infty$ be the composition $i_n\circ g_\gamma$. Since $f:\T_n\to A_n$ is \'etale, the action of $\gamma$ restricted to $A_n$ is the unique homomorphism lifting the Galois action of $\gamma$ on $\overline{A}_n$ and making the diagram:
    $$\xymatrix{ \T_n\ar[d]_{\gamma}\ar[r] & A_n\ar[d]^{g_\gamma} \\ \T_n\ar[r] & \widehat{A}_\infty}$$
    commute. Hence, $i_{\gamma}=g_{\gamma}$.
\end{proof}

Recall that there are two actions of $\Gamma$ on $A_\infty$: one is the restricted action from $\widehat{A}_\infty$ (denoted by $\gamma$ and called the \textbf{natural action}), and the other is the \textbf{Galois action} (denoted by $\gamma_*$). These two actions differ in general unless $\alpha=1$. The main obstruction to constructing a sheafified Riemann-Hilbert correspondence is this difference. We will compare these two actions explicitly in this subsection.

Let $\Omega_{A_n/\Bp{\alpha}}$ be the module of differentials. Then $\{T_i^{-1}dT_i:i=1,2,\dots,d\}$ is a basis, and let $T_i\partial_{i}$ be its dual.

We first consider the case of a torus.

\begin{lemma}
    The natural action of $\Gamma$ on $\T_n$ is $p^n\Gamma$-analytic.
\end{lemma}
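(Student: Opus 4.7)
The plan is to derive an explicit formula for $\gamma_i(T_j^{1/p^n})$ inside $\T_n$ via an identification with the Teichm\"uller lift, observe that after restriction to $p^n\Gamma$ the formula becomes polynomial in the Lie-group coordinates, and thereby conclude analyticity.

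The first step is to identify the image of $T_j^{1/p^n}\in\T_n$ inside $\hat{\T}_\infty$ with the Teichm\"uller lift $[T_j^{\flat,1/p^n}]$. Since $\T\to\T_n$ is finite \'etale, the inclusion $\T_n\hookrightarrow\hat{\T}_\infty$ is the unique $\T$-algebra lift of $\bar{\T}_n\hookrightarrow\hat{\bar{\T}}_\infty$, where $\T\to\hat{\T}_\infty$ is determined on generators by $T_i\mapsto[T_i^\flat]$. The candidate assignment $T_j^{1/p^n}\mapsto[T_j^{\flat,1/p^n}]$ is a continuous multiplicative $\T$-algebra map with the correct reduction modulo $t$, so by uniqueness it \emph{is} the inclusion. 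Applying Witt vector functoriality to $\gamma_i(T_j^{\flat,1/p^n})=\ep^{\delta_{ij}/p^n}\cdot T_j^{\flat,1/p^n}$ in the tilt now gives $\gamma_i(T_j^{1/p^n})=[\ep^{1/p^n}]^{\delta_{ij}}\cdot T_j^{1/p^n}$, and the standard identity $[\ep^{1/p^n}]=\zeta_{p^n}\exp(t/p^n)$ in $\Bpp{\alpha}(K,K^+)$ (both sides are $p^n$-th roots of $[\ep]=\exp(t)$ and agree modulo $t$) yields
$$\gamma_i\bigl(T_j^{1/p^n}\bigr) = \zeta_{p^n}^{\delta_{ij}}\exp(\delta_{ij}t/p^n)\cdot T_j^{1/p^n}.$$

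Iterating, for $\gamma=\prod_i\gamma_i^{c_i}$ with $c_i\in\mathbb{Z}_p$ one obtains $\gamma(T_j^{1/p^n}) = (\zeta_{p^n}\exp(t/p^n))^{c_j}\cdot T_j^{1/p^n}$. Restricting to $p^n\Gamma$, i.e.\ $c_i=p^n b_i$ with $b_i\in\mathbb{Z}_p$, the factor $\zeta_{p^n}^{p^n b_j}$ trivialises and $\exp(t/p^n)^{p^n b_j}$ becomes $\exp(b_j t)=\sum_{k=0}^{\alpha-1}(b_j t)^k/k!$, a polynomial of degree $<\alpha$ in $b_j$. Extending multiplicatively to the monomial basis and by continuity, any $f=\sum_a\xi_a T^{a/p^n}\in\T_n$ is sent to
$$\gamma(f)=\sum_a\xi_a\exp(\langle a,b\rangle t)T^{a/p^n}=\sum_{|m|<\alpha}b^m\cdot C_m(f),$$
with each $C_m(f)\in\T_n$ a convergent series (using $\xi_a\to 0$ and $|a^m|_p\leq 1$). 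This is a polynomial in $b\in\mathbb{Z}_p^d$ with coefficients in $\T_n$, so the $p^n\Gamma$-action on $\T_n$ is $p^n\Gamma$-analytic, in fact polynomial.

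The only delicate point is the Teichm\"uller identification in the first step, which genuinely relies on the uniqueness of \'etale lifts secured by Proposition \ref{etalebdrp}; it is here that one must carefully match the formal variable $T_j^{1/p^n}\in\T_n$ with the Witt-theoretic element $[T_j^{\flat,1/p^n}]\in\hat{\T}_\infty$. Once this is in hand, the remaining computation is direct, resting only on $[\ep]=\exp(t)$ in $\Bpp{\alpha}(K,K^+)$ and on the truncation $t^\alpha=0$ reducing the exponential to a polynomial.
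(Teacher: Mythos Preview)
Your proof is correct and follows essentially the same approach as the paper: both reduce to showing that the action of $p^n\Gamma$ on the generators $T_j^{1/p^n}$, identified with the Teichm\"uller lifts $[(\bar{T}_j^\flat)^{1/p^n}]$ inside $\hat{\T}_\infty$, is analytic, and both ultimately rest on the analyticity of $a\mapsto[\epsilon]^a$. The paper's proof is a two-line sketch invoking this fact directly, whereas you unpack it into the explicit polynomial formula $\gamma(T_j^{1/p^n})=\exp(b_jt)\,T_j^{1/p^n}$ using the truncation $t^\alpha=0$, and then carefully extend to all of $\T_n$. One minor remark: your uniqueness-of-lift argument for the Teichm\"uller identification is valid but slightly more than needed, since in the paper the inclusion $\T_n\subset\hat{\T}_\infty$ is \emph{constructed} (in the ``by calculation'' theorem preceding this lemma) by sending $T_j^{1/p^n}$ to $[(\bar{T}_j^\flat)^{1/p^n}]$.
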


\begin{proof}
    It suffices to prove that the action of $p^n\Gamma$ on $[(\overline{T}_i^{\flat})^{\tfrac{1}{p^n}}]$ is analytic. This follows from the fact that the map
    $$(a\mapsto [\epsilon]^a):\mathbb{Z}_p\to B_\alpha$$
    is analytic.
\end{proof}

\begin{defi}
    For any $\gamma\in \Gamma$ and $a\in \T_\infty$, define
    $$\partial_{\gamma}(a):=\lim_{u\in\mathbb{Z}_p\to 0}\frac{(u\cdot\gamma)(a)-a}{u},$$
    where the product is the scalar product. It is a derivation of $\T_\infty$ such that $\partial_{\gamma_i}=tT_i\partial_i$.
\end{defi}

Note that $\partial_\gamma$, regarded as a $\Q_p$-linear operator, is indeed nilpotent; thus we can consider $\exp (\partial_\gamma)$ (the exponential of an operator) for each $\gamma$. Moreover, by construction, the map
$$\gamma\mapsto \partial_\gamma$$
is a $\mathbb{Z}_p$-linear map from $\Gamma$ to $\End_{\Q_p}(\T_\infty)$. Thus, the map
$$(\gamma,a)\mapsto\exp(\partial_\gamma)(a)$$
for all $\gamma\in\Gamma$ and $a\in A_\infty$ defines a group action.

\begin{lemma}
    We have $\gamma(a)=\gamma_*\circ\exp(\partial_{\gamma})(a)=\exp(\partial_{\gamma})\circ\gamma_*(a)$ for all $\gamma\in\Gamma$ and $a\in \T_\infty$.
\end{lemma}

\begin{proof}
    It suffices to verify the identity for the generators $\gamma_i$. Thus, we need only check the compatibility of the action of $\gamma_i$ on $T_j$. If $i \neq j$, both actions fix $T_j$. If $i = j$, the identity follows from the well-known fact in $\bdrp$:
    $$[\ep^{\tfrac{1}{p^n}}]=\zeta_{p^n}\exp(\tfrac{t}{p^n}).$$
\end{proof}

Now we generalize these claims to $A$. Since $\T \to A$ is \'etale, each derivation $\partial_\gamma$ extends uniquely to $A$, still denoted by $\partial_\gamma$. Moreover, the uniqueness implies that
$$\partial_{\gamma_1+\gamma_2}=\partial_{\gamma_1}+\partial_{\gamma_2}$$
and
$$\partial_{u\gamma}=u\partial_\gamma.$$

\begin{lemma}
    For any $\gamma\in \Gamma$ and $a\in A_\infty$, we have $\gamma(a)=\gamma_*\circ\exp(\partial_{\gamma})(a)=\exp(\partial_{\gamma})\circ\gamma_*(a)$.
\end{lemma}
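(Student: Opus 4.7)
The plan is to reduce the statement on $A_\infty$ to the torus case via the formal étaleness argument already used in the previous subsection. The key structural facts I would set up first are: (i) $\exp(\partial_\gamma)$ is a well-defined ring automorphism of $A_\infty$ that is congruent to $1$ modulo $t$, and (ii) the Galois action $\gamma_*$ commutes with the derivation $\partial_\gamma$ on $A_\infty$. Once these are in hand, the identity follows by the uniqueness of the lifted $\Gamma$-action obtained from the previous theorem.

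For (i): on $\T_\infty$ we have $\partial_{\gamma_i}=t T_i\partial_i$, so $\partial_\gamma$ takes values in $t\T_\infty$. The derivation $\partial_\gamma$ extends uniquely from $\T_\infty$ to $A_\infty$ by étaleness of $\T\to A$, and the unique extension of $t\cdot(T_i\partial_i)$ is $t$ times the unique extension of $T_i\partial_i$; in particular $\partial_\gamma(A_\infty)\subset tA_\infty$. Since $t^\alpha=0$, the exponential series $\exp(\partial_\gamma)=\sum_{k=0}^{\alpha-1}\partial_\gamma^k/k!$ is a finite sum giving a ring automorphism (its compositional inverse is $\exp(-\partial_\gamma)$), and visibly $\exp(\partial_\gamma)\equiv 1\pmod{t}$.

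For (ii): I would check that $\gamma_*\circ\partial_\gamma\circ\gamma_*^{-1}$ and $\partial_\gamma$ are two derivations of $A_\infty$ over $\Bpp{\alpha}(K,K^+)$ whose restrictions to $\T_\infty$ coincide. On $\T_\infty$ this is a direct check: the Galois action $\gamma_*$ acts on $T_j^{1/p^n}$ by multiplication by a root of unity, which commutes with the scalar operator $tT_i\partial_i$. The uniqueness of the extension of derivations along the étale map $\T_\infty\to A_\infty$ then forces $\gamma_*\partial_\gamma\gamma_*^{-1}=\partial_\gamma$ on $A_\infty$, which at once gives the commutation $\gamma_*\circ\exp(\partial_\gamma)=\exp(\partial_\gamma)\circ\gamma_*$.

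It remains to show $\gamma=\gamma_*\circ\exp(\partial_\gamma)$. Both sides are $\Bpp{\alpha}(K,K^+)$-algebra automorphisms of $A_n$ (for each $n$, hence of $A_\infty$ in the colimit). Reducing modulo $t$, the natural action $\gamma$ was defined in the previous subsection precisely as the lift of the Galois action on $\bar{A}_n$, while $\gamma_*\circ\exp(\partial_\gamma)\equiv\gamma_*\pmod{t}$ also reduces to the Galois action by (i). Restricting to $\T_n$, the two automorphisms agree by the lemma already proved in the torus case. But the previous theorem (in Subsection \ref{toricalg}) established that there is a \emph{unique} lift of the Galois action on $\bar{A}_n$ to a $\Bpp{\alpha}(K,K^+)$-algebra automorphism of $A_n$ compatible with the chosen action on $\T_n$. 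This uniqueness forces $\gamma=\gamma_*\circ\exp(\partial_\gamma)$ on every $A_n$, hence on $A_\infty$.

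The only real obstacle is keeping track of the three distinct structural inputs (existence/finiteness of the exponential, commutation of $\gamma_*$ with $\partial_\gamma$, and the uniqueness from the previous theorem); each of these is cleanly supplied by the formal étaleness of $\T\to A$ and the nilpotence $t^\alpha=0$, so I do not expect any computational surprises beyond the checks already carried out in the $\T_\infty$ case.
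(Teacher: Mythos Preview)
Your proposal is correct and is precisely the formal \'etaleness argument the paper invokes; the paper's own proof consists only of the single sentence ``can be proved by formal \'etaleness argument,'' and you have filled in exactly the details that sentence gestures at (finiteness of $\exp(\partial_\gamma)$ via $t$-nilpotence, commutation of $\gamma_*$ with $\partial_\gamma$ via uniqueness of derivation extensions, and the identification $\gamma=\gamma_*\circ\exp(\partial_\gamma)$ via the uniqueness clause of the preceding theorem).
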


\begin{proof}
    The three actions coincide on $\T$ and also agree modulo $t$. For any $n$, since $f:\T\to A_n$ is standard \'etale, they are equal on $A_n$ by Theorem \ref{theo: formally etale of standard etale}.
\end{proof}

\begin{prop}\label{prop:locallyan}
    The inclusion $A_n\to \widehat{A}_\infty$ identifies $A_n$ with the $p^n\Gamma$-analytic vectors of $\widehat{A}_\infty$.
\end{prop}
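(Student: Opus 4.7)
The plan is to prove both inclusions $A_n \subseteq \hat{A}_\infty^{p^n\Gamma\text{-an}}$ and $\hat{A}_\infty^{p^n\Gamma\text{-an}} \subseteq A_n$ separately, using the lemma $\gamma = \gamma_* \circ \exp(\partial_\gamma)$ together with the monomial decomposition of the toric algebra.

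For the first inclusion, note that $A_n = A_\infty^{p^n\Gamma_*}$, so $\gamma_*$ is trivial on $A_n$ for any $\gamma \in p^n\Gamma$, and by the previous lemma $\gamma|_{A_n} = \exp(\partial_\gamma)$. Parametrising $p^n\Gamma \cong \mathbb{Z}_p^d$ via $\vec{w} \mapsto \gamma_{\vec{w}} = p^n \sum_i w_i \gamma_i$, I get $\partial_{\gamma_{\vec{w}}} = p^n t \sum_i w_i T_i \partial_i$; since $t^\alpha = 0$ in $\Bpp{\alpha}$, the expansion
$$\exp(\partial_{\gamma_{\vec{w}}})(a) = \sum_{k=0}^{\alpha-1}\frac{(\partial_{\gamma_{\vec{w}}})^k(a)}{k!}$$
is, for any $a \in A_n$, a polynomial in $\vec{w}$ of total degree less than $\alpha$ with coefficients in $A_n$, hence manifestly analytic in $\vec{w}$.

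For the reverse inclusion I use a topological (completed) direct sum decomposition
$$\hat{A}_\infty \;\cong\; \widehat{\bigoplus_{\vec{k} \in S}}\, A_n \cdot T^{\vec{k}},$$
where $T^{\vec{k}} := \prod_i [\bar T_i^\flat]^{k_i}$ and $S$ is a system of coset representatives of $\mathbb{Z}[1/p]^d/(1/p^n)\mathbb{Z}^d$ containing $0$. This decomposition comes from the monomial structure of $\hat\T_\infty = \T_n \oplus X_n(\T)$ implicit in the theorem of \S2.3, together with the base change isomorphism $\hat\T_\infty \hat\otimes_\T A \cong \hat A_\infty$. The natural $\Gamma$-action is diagonal: on the $\vec{k}$-summand, $\gamma$ acts on the $A_n$-coefficient by $\exp(\partial_\gamma)$ and multiplies $T^{\vec{k}}$ by $[\epsilon^{\vec{u}\cdot\vec{k}}]$ (writing $\gamma = \sum_i u_i \gamma_i$). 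For a $p^n\Gamma$-analytic $v = \sum_{\vec{k}\in S} v_{\vec{k}} T^{\vec{k}}$, continuity of the projection onto each summand forces every component map $\vec{w}\mapsto \exp(\partial_{\gamma_{\vec{w}}})(v_{\vec{k}}) \cdot [\epsilon^{p^n\vec{w}\cdot\vec{k}}]$ to be analytic in $\vec{w}\in \mathbb{Z}_p^d$. The case $\vec{k}=0$ is automatic, so everything reduces to showing $v_{\vec{k}} = 0$ whenever $0\neq \vec{k} \in S$.

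The main obstacle is this last claim. Applying Fontaine's formula $[\epsilon^{1/p^l}] = \zeta_{p^l}\exp(t/p^l)$, I decompose $[\epsilon^{p^n\vec{w}\cdot\vec{k}}] = \zeta(\vec{w})\cdot\exp(p^n\vec{w}\cdot\vec{k}\cdot t)$, where $\zeta(\vec{w})$ is a locally constant root-of-unity-valued function on $\mathbb{Z}_p^d$ that is non-constant precisely because $\vec{k}\notin (1/p^n)\mathbb{Z}^d$. The strategy is to compare the Taylor expansion of the component function around $\vec{w}=0$ (where $\zeta = 1$) with its expansion around some $\vec{w}_0$ with $\zeta(\vec{w}_0) \neq 1$: locally both are polynomials in the coordinate (since $\exp(\partial_{\gamma_{\vec{w}}})(v_{\vec{k}})$ is polynomial and $\zeta$ is locally constant), and they differ only by the scalar $\zeta(\vec{w}_0)$. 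Matching them to a single global power series representation yields the identity
$$(1-\zeta(\vec{w}_0))\cdot\exp(\partial_{\gamma_{\vec{w}}})(v_{\vec{k}})\cdot\exp(p^n\vec{w}\cdot\vec{k}\cdot t) = 0$$
on a neighbourhood of $\vec{w}_0$. Since $1-\zeta(\vec{w}_0)$ and $\exp(p^n\vec{w}\cdot\vec{k}\cdot t)$ both reduce to nonzero elements of $K = \Bpp{\alpha}/(t)$, they are units in $\Bpp{\alpha}$, so the polynomial $\exp(\partial_{\gamma_{\vec{w}}})(v_{\vec{k}})$ must vanish identically; evaluating at $\vec{w} = 0$ gives $v_{\vec{k}} = 0$, as desired.
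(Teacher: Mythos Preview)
Your argument for the first inclusion agrees with the paper's. For the reverse inclusion, however, the paper takes a much shorter route: it proceeds by induction on $\alpha$, using that the $\alpha=1$ case is the classical decompletion statement for $\hat{\bar A}_\infty$. For the inductive step one applies the left-exact functor of $p^n\Gamma$-analytic vectors to the short exact sequence
\[
0\to \hat A_\infty/t^{\alpha-1}\xrightarrow{\,t\,}\hat A_\infty\to \hat A_\infty/t\to 0,
\]
and then a diagram chase against the corresponding sequence $0\to A_n/t^{\alpha-1}\to A_n\to \bar A_n\to 0$ (using the induction hypothesis on the outer terms) gives the middle equality. Your monomial-by-monomial analysis is more explicit and self-contained: it never invokes the $\alpha=1$ decompletion result as a black box, and it makes visible exactly where analyticity fails on the non-trivial summands (through the non-constant root-of-unity factor $\zeta(\vec w)$). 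The price is that you must justify carefully that a convergent power series agreeing with a polynomial near $0$ equals that polynomial globally, and that a polynomial over $A_n$ vanishing on a $p$-adic ball is identically zero; both hold since $A_n$ is a $\mathbb{Q}_p$-algebra, but it is worth stating this. The paper's d\'evissage avoids these checks entirely and transfers all the content to the well-known $\alpha=1$ case.
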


\begin{proof}
    First, we prove that $A_n$ is $p^n\Gamma$-analytic. By definition of the Galois action, $\gamma_*(a)=a$ for any $\gamma\in p^n\Gamma$ and $a\in A_n$. Thus,
    $$\gamma(a)=\exp(\partial_\gamma)(a)$$
    for all $\gamma\in p^n\Gamma$ and $a\in A_n$, and this expression is clearly analytic.

    We now prove the claim by induction on $\alpha$. When $\alpha=1$, the result is well known. Suppose we have proved the case for $\alpha=a$; now assume $\alpha= a+1$. Consider the exact sequence
    $$0\to \widehat{A}_{\infty}/t^{a}\xrightarrow{t}\widehat{A}_\infty\to \widehat{A}_\infty/t\to 0.$$
    Taking $p^n\Gamma$-analytic vectors is a left exact functor, so we obtain the sequence
    $$0\to (\widehat{A}_{\infty}/t^{a})^{p^n\Gamma\mathrm{-an}}\xrightarrow{t}\widehat{A}_\infty^{p^n\Gamma\mathrm{-an}}\to (\widehat{A}_\infty/t)^{p^n\Gamma\mathrm{-an}}.$$
    By the induction hypothesis, we have the commutative diagram
    $$\xymatrix{0\ar[r] & A_n/t^a\ar[r]^-{t}\ar@{=}[d] &A_n\ar[r]\ar@{^(->}[d] &\overline{A}_n\ar[r]\ar@{=}[d] &0\\0\ar[r]&(\widehat{A}_{\infty}/t^{a})^{p^n\Gamma\mathrm{-an}}\ar[r]&\widehat{A}_\infty^{p^n\Gamma\mathrm{-an}}\ar[r]&(\widehat{A}_\infty/t)^{p^n\Gamma\mathrm{-an}}&}.$$
    Then the middle vertical arrow is an isomorphism by a straightforward diagram chase.
\end{proof}

Keeping the notations from the above subsection, we now prove the following theorem.

\begin{theo}\label{Sen toric}
    The algebra $\widehat{A}_\infty$ with its natural action by $\Gamma\cong \mathbb{Z}_p^d$, as well as the algebras
    $$\big(\widehat{A}_{\infty}\big)_n^i:=\Bp{\alpha}\langle T_1^{\pm \frac{1}{p^\infty}},\dots,T_{i-1}^{\pm\frac{1}{p^\infty}},T_{i}^{\pm\frac{1}{p^n}},T_{i+1}^{\pm\frac{1}{p^\infty}},\dots,T_d^{\pm\frac{1}{p^\infty}} \rangle\widehat{\otimes}_{\T}A,$$
    forms a $d$-dimensional Sen theory in the sense of \cite[Definition 2.2.5]{camargo2023geometric}. Moreover, this Sen theory is locally analytic.
\end{theo}

The proof is a direct calculation on a toric algebra and extends to the general case by using complete tensor formulas. Let us first recall the definitions.

\begin{defi}\label{sen}
    Let $\Gamma$ be a $p$-adic Lie group and assume that there is an isomorphism $\chi:\Gamma\cong \Z_p^d$. Suppose $\Lambda$ is a $\Q_p$-Tate algebra with a continuous action by $\Gamma$. Let $\Lambda_{*}$ be a definition subring.

    A $d$-dimensional Sen theory consists of the triple $(\Lambda,\Gamma,\chi)$ together with the following data:
    \begin{enumerate}[(1)]
        \item For each $n\in\mathbb{N}$ and $i\in\{1,2,\dots,d\}$, a closed subalgebra $\Lambda_n^i\subseteq \Lambda$.
        \item For each $n$ and $i$ as above, a $\Lambda_n^i$-linear continuous projection $R_n^i:\Lambda\to \Lambda_n^i$ and let
        $$X_n^i=\ker R_n^i.$$
    \end{enumerate}
    These data are required to satisfy the following conditions:
    \begin{enumerate}[(1)]
        \item $\Lambda_n^i\subseteq \Lambda_{n+1}^i$ and $R_{n}^i\circ R_{n+1}^i=R_n^i$ for all $n$ and $i$.
        \item $\Lambda_n^i$ is $\Gamma$-invariant and $R_n^i$ is $\Gamma$-equivariant for all $n$ and $i$.
        \item The projections $R_n^i$ commute pairwise.
        \item There exists a constant $c_2\in\Z$ such that $R_n^i(\Lambda^+)\subseteq p^{-c_2}\Lambda_{n *}^{i}$ for all $n$ and $i$, where $\Lambda_{n*}=\Lambda_*\cap\Lambda_n^i$. Moreover,
        $$x=\lim_{i\to\infty}R_n^i(x)$$
        for all $i$, a condition sometimes called the \textbf{CST2} condition.
        \item For any $i\leq d$, let $e_i$ be the $i$-th standard basis vector. For any $m\in\mathbb{N}$, let
        $$\gamma_i^{(m)}=\chi^{-1}(p^me_i).$$
        For any $n$, $i$ and $m\leq n$, the restriction
        $$(1-\gamma_i^{(m)})|_{X_n^i}$$
        is a bijection, and there exists a constant $c_3\in\Z$ such that
        $$(1-\gamma_i^{(m)})^{-1}(X_n^i\cap \Lambda_*)\subseteq p^{-c_3}(X_n^i\cap \Lambda_*)$$
        for all $n$ and $i$. This is sometimes called the \textbf{CST3} condition.
    \end{enumerate}
    Let $\mathcal{R}_n=\cap_{i=1}^d\Lambda_n^i$. This definition is independent of the choice of $\Lambda_*$.
\end{defi}

\begin{rmk}
    Our definition is slightly different from Camargo's original version. In fact, our definition is the special case when $H=1$ in Camargo's definition.
\end{rmk}

These data can be used to do decompletion.

\begin{theo}[\cite{berger2008familles},\cite{camargo2023geometric}]\label{decompletion}
    Keep the notations in Definition \ref{sen}.
    \begin{enumerate}[(1)]
        \item For any continuous cocycle $\rho:\Gamma\to GL_r(\Lambda)$, there exists an $N$ as well as a $V\in GL_r(\Lambda)$, such that the cocycle $$\gamma\mapsto V^{-1}\rho_\gamma\gamma(V)$$ takes values in $GL_r(\mathcal{R}_N)$.
        \item Let $N\geq 0\in \Z$ and $\rho_1,\rho_2:\Gamma\to GL_r(\mathcal{R}_N)$ be two cocycles such that they are equivalent as cocycles in $GL_r(\Lambda)$. There exists $N'>N$ such that $\rho_1$ and $\rho_2$ are isomorphic as cocycles in $GL_r(\mathcal{R}_{N'})$.
    \end{enumerate}
\end{theo}

In our case, some more assumptions hold and we can describe the decompleting process in a fancy way.

\begin{defi}
    Keep the notations in the Definition \ref{sen}. We say that the Sen theory is locally analytic if for any $n$, $\mathcal{R}_n$ is the set of $p^n\Gamma$-analytic vectors.
\end{defi}

Now we fix a continuous cocycle $\rho:\Gamma\to GL_r(\Lambda)$ and view it as a semi-linear action on $M=R^r$ via multiplication
$$\gamma v=\rho_\gamma\cdot \gamma(v)$$ where $\gamma(v)$ is just the $\gamma$ action of coefficients.

\begin{theo}\label{Sen}
    For sufficiently large $N$, the canonical homomorphism
    $$\Lambda\otimes_{\mathcal{R}_n}M^{p^n\Gamma-an}\to M$$ is isomorphic for all $n\geq N$.
\end{theo}

\begin{proof}
    By Theorem \ref{decompletion}, we may assume $N_0$ such that $\rho$ takes values in $GL_{r}(\mathcal{R}_{N_0})$. Choose $N>N_0$ such that $p^N\Gamma$ takes values in $1+p^2M_r(\mathcal{R}_{N_0*})$. We claim that this $N$ satisfies the assumption.

    Let $n\geq N$ and $$v=\begin{pmatrix}x_1\\x_2\\ \vdots \\ x_r\end{pmatrix}\in \Lambda^r.$$ We want to show that $v$ is $p^n\Gamma$-analytic is and only if $x_i\in \mathcal{R}_n$ for any $i$.
    By definition, $$\gamma\cdot v=\rho_\gamma\begin{pmatrix}\gamma(x_1)\\ \gamma(x_2)\\ \vdots \\ \gamma(x_r)\end{pmatrix}.$$ We first claim that $\gamma\mapsto \rho_\gamma$ is analytic on $p^N\Gamma$.
    We will consider $\mathcal{R}_{N_0}^r$ as a $\Q_p$-Banach space and $\mathcal{R}_{N_0*}^{r}$ a lattice. Let $L_i$ be the $\Q_p$-linear map $({\gamma_i}^{(N)}-1)$ hence they commute to each other. By assumption, $$L_i(\mathcal{R}_{N_0*}^{r})\subseteq p^2\mathcal{R}_{N_0*}^{r}.$$ Since $\rho$ is a homomorphism from $\Gamma$ to $GL_{\Q_p}(\mathcal{R}^r)$, $\rho\circ\chi^{-1}$ sends $(p^Nx_1,p^Nx_2,\dots,p^Nx_d)$ to $$(1+L_1)^{x_1}(1+L_2)^{x_2}\dots(1+L_d)^{x_d}=\sum_{r_1,r_2,\dots,r_d=0}^{\infty}\prod_{j=1}^d\binom{x_j}{r_j}L_j^{r_j}$$ for all $x_i\in\Z$. Note that $$v_p\left(\binom{n}{m}\right)\leq -v_p(m!)\leq -\tfrac{m}{p-1}.$$ Thus $$\prod_{j=1}^d\binom{x_j}{r_j}L_j^{r_j}$$ sends $\mathcal{R}_{N_0*}^{r}$ to $p^{r_1+r_2+\dots+r_d}\mathcal{R}_{N_0*}^{r}$. Thus by continuity of $\rho$, $$\rho\circ\chi^{-1}\left((p^Nx_1,p^Nx_2,\dots,p^Nx_d)\right)=\sum_{r_1,r_2,\dots,r_d=0}^{\infty}\prod_{j=1}^d\binom{x_j}{r_j}L_j^{r_j}$$ and is analytic.

    Returning to our theorem, since $$\gamma\cdot v=\rho_\gamma\begin{pmatrix}\gamma(x_1)\\ \gamma(x_2)\\ \vdots \\ \gamma(x_r)\end{pmatrix},$$ it is analytic on $p^n\Gamma$ if and only if $\gamma(x_i)$ is analytic on $p^n\Gamma$ for each $i$. Since our Sen theory is locally analytic, $x_i\in\mathcal{R}_n$ for each $i$.
\end{proof}

Now we prove the Theorem \ref{Sen toric}.

\begin{proof}[Proof to Theorem \ref{Sen toric}]
    It suffices to prove that the given data form a $d$-dimensional Sen theory.

    We first prove for $(\widehat{\T}_{\infty},\Gamma\cong\Z_p^d,\widehat{\T}_n^i,R_n^i)$ where $$\widehat{\T}_n^i=\Bp{\alpha}\langle T_1^{\pm \frac{1}{p^\infty}},\dots,T_{i-1}^{\pm\frac{1}{p^\infty}},T_{i}^{\pm\frac{1}{p^n}},T_{i+1}^{\pm\frac{1}{p^\infty}},\dots,T_d^{\pm\frac{1}{p^\infty}} \rangle$$ and $R_n^i$ is the normalised trace. Let
    $$\widehat{\T}_{\infty*}=\mathbb{A}_{inf}(K,K^+)/(\ker\theta)^\alpha\langle T_1^{\pm\frac{1}{p^\infty}},T_2^{\pm\frac{1}{p^\infty}},\dots,T_d^{\pm\frac{1}{p^\infty}}\rangle$$
    and thus
    $$\widehat{\T}_{n*}^i=\mathbb{A}_{inf}(K,K^+)/(\ker\theta)^\alpha\langle T_1^{\pm\frac{1}{p^n}},T_2^{\frac{1}{p^n}},\dots,T_d^{\pm\frac{1}{p^n}}\rangle.$$
    Note that $R_n^i$ sends $T_j^{\frac{1}{p^l}}$ to itself if $j\neq i$ or $l\leq n$ and $0$ otherwise. Thus, we can choose $c_2=0$. Moreover, by definition
    $$X_n^i=\widehat{\bigoplus}_{l>n}\T_n^iT^{\frac{1}{p^l}}_i$$ and thus $(1-\gamma_i^{(m)})$ sends $T_i^{\frac{1}{p^l}}$ to $(\zeta_{p^{l-m}}-1)T_i^{\frac{1}{p^l}}$.
    Note that
    $$v_p\left(\zeta_{p^{l-m}}-1\right)=\frac{1}{(p-1)p^{l-m-1}}.$$ Thus we can choose $c_3=1$.

    Now we go to the general case. Let $f:\T\to A$ be an algebra with a toric chart. Recall we have $$\widehat{A}_\infty=A\widehat{\otimes}_{\T}\widehat{\T}_\infty.$$ Fix a definition subring $A_*\subseteq A$ which contains the image of $\T_*$ defined in the above paragraph. Let $\widehat{A}_{\infty*}$ be the image of $A_*\widehat{\otimes}_{\T_*}\widehat{\T}_{\infty*}$ in $\widehat{A}_\infty$. By the consequences in the above paragraph,
    $$\widehat{\T}_{\infty}=\T_n^i\oplus X_n^i$$ and $$\widehat{\T}_{\infty*}=\T_{n*}^i\oplus \left(X_{n}^i\cap\widehat{\T}_{\infty*}\right).$$ Thus, $$\widehat{A}_{\infty*}=\widehat{A}_{n*}^i\oplus \left(A_*\widehat{\otimes}_{\T_*}\left(X_{n}^i\cap\widehat{\T}_{\infty*}\right)\right)$$ where the last tensor product is considered as its image in $\widehat{A}_\infty$. This calculation implies $A_{n*}^i=\widehat{A}_{\infty*}\cap A_n^i$ is equal to the image of $A_*\widehat{\otimes}_{\T_*}\T_{n*}^i$. Thus, we can still choose $c_2=0$ and $c_3=1$.
\end{proof}

\begin{rmk}
    The lattice $\widehat{A}_{\infty*}$ is in general different from $\mathbb{A}_{inf}(\widehat{\overline{A}}_\infty,\widehat{\overline{A}}_{\infty}^+)/(\ker\theta)^\alpha$. Hence, if we choose $\mathbb{A}_{inf}(\widehat{\overline{A}}_\infty,\widehat{\overline{A}}_{\infty}^+)/(\ker\theta)^\alpha$ as a new $\widehat{A}_{\infty*}$, $c_2$ and $c_3$ will change.

    We also want to mention that the property
    $$A_{n*}:=\widehat{A}_{\infty*}\cap A_n=im\big(A_*\widehat{\otimes}_{\T_*}{\widehat{\T}_{\infty*}}\to \widehat{A}_{\infty}\big)$$ highly depends on the existence of normalized traces. This does not holds for a general tower.
\end{rmk}

\subsection{The smooth adic spaces over $\Bp{\alpha}$}\label{subsec: smooth adic space over B_alpha}

The main object we will consider in the paper is smooth adic spaces over $\Bp{\alpha}$ for a perfectoid Tate--Huber pair $(K,K^+)$ over $\Q_p$.

The following proposition is the same as for rigid analytic spaces.

\begin{prop}
    Suppose $X$ is an adic space over $\mathrm{Spa}(\Bp{\alpha},{\Bp{\alpha}}^+)$. It is smooth over $\mathrm{Spa}(\Bp{\alpha},{\Bp{\alpha}}^+)$ if and only if there exists an open covering $$\{U_i\subseteq X:i\in I\}$$ such that:
    \begin{enumerate}[(1)]
        \item each $U_i$ is affinoid, say $U_i=\mathrm{Spa}(A_i,A_i^+)$;
        \item for each $i$, there exists a toric algebra of finite dimension $\T$ and \'etale morphism $\T\to A_i$.
    \end{enumerate}
\end{prop}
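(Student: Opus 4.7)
The plan is to reduce to the classical rigid-analytic statement over $K$ via reduction modulo $t$ and then lift the resulting toric chart to $\Bpp{\alpha}$ using formal étaleness. The "if" direction is routine: the toric algebra $\T$ has free differential module on $\{d\log T_i\}$ and is formally smooth of topologically finite type over $\Bpp{\alpha}(K,K^+)$ by direct verification, hence smooth in Huber's sense. Composing with an étale morphism $\T \to A_i$ (smooth of relative dimension zero) yields smoothness of $A_i$.

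For the "only if" direction, fix $x \in X$ and an affinoid neighborhood $U = \mathrm{Spa}(A, A^+)$. Writing $\bar{A} = A/t$ and $\bar{X} = X \otimes_{\Bpp{\alpha}(K,K^+)} K$, the reduction $\bar{X}$ is smooth over $K$ at the image of $x$. By the classical theorem on smooth rigid varieties, after shrinking $U$ one obtains an étale morphism from a polynomial chart over $K$; translating each coordinate by a suitable constant in $K^\times$ and shrinking $U$ further so that the images are units on all of $U$ produces an étale morphism $\bar{\psi} : \bar{\T} = K\langle T_1^{\pm 1}, \ldots, T_d^{\pm 1}\rangle \to \bar{A}$ from the toric (Laurent) algebra. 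It then suffices to lift $\bar{\psi}$ to a continuous étale $\Bpp{\alpha}(K,K^+)$-algebra homomorphism $\psi : \T \to A$. I would proceed by induction on $\alpha$ along the nilpotent thickening $0 \to t^{\alpha-1} \to \Bpp{\alpha}(K,K^+) \to \Bpp{\alpha-1}(K,K^+) \to 0$: formal étaleness of the partial lift guarantees existence and uniqueness of a next-stage lift (the obstruction living in an $\Omega^1$-twist of $t^{\alpha-1}/t^\alpha$ that vanishes by étaleness), and étaleness is itself preserved under nilpotent thickenings.

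The main obstacle is ensuring that the algebraic lift provided by formal étaleness is genuinely a morphism of Tate--Huber pairs, respecting both the topology and the ring of integers, and that the resulting $\psi$ is étale in Huber's adic sense rather than merely algebraically étale. Here I would invoke the conventions of Section 1.5 (where $A^+$ is automatically determined in the étale case as the integral closure) together with Proposition \ref{etalebdrp}, which characterizes étale morphisms out of $\Bpp{\alpha}(K,K^+)$ as $\Bpp{\alpha}$ of perfectoid pairs; this pins down the topology on $A$ from that on $\bar{A}$ and forces the unique formal lift to land in the correct topological and integral structure, completing the toric chart on $U$.
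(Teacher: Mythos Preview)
The paper does not actually prove this proposition: it simply prefaces the statement with ``The following is the same as rigid analytic spaces'' and moves on. So there is no proof to compare against, and your sketch already goes further than the paper does.

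Your argument is essentially correct, but it is more roundabout than what the paper presumably has in mind, and one of your justifications is misapplied. The direct route implicit in ``same as rigid analytic spaces'' is: Huber's characterisation of smooth morphisms already gives, locally around any point, an \'etale map from a \emph{polydisc} $\Bpp{\alpha}(K,K^+)\langle S_1,\dots,S_d\rangle \to A$ over $\Bpp{\alpha}(K,K^+)$. Since $t$ is nilpotent, $|X|=|\bar X|$ and all valuations depend only on reductions mod $t$; so the classical manoeuvre---translate each $S_i$ by a lift of a constant in $K$ and shrink so that $|S_i|=1$---works verbatim to convert the polydisc chart into a toric chart $\T\to A$, with \'etaleness inherited because $\T$ is a rational localisation of the polydisc. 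No reduction-and-lifting step is needed.

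In your approach the lifting of $\bar\psi$ to $\psi$ is fine (just lift each $T_i$ to a unit of $A$), but the phrase ``formal \'etaleness of the partial lift guarantees existence'' is backwards: existence is trivial, and what you really need is that \'etaleness of $\psi$ follows from \'etaleness of $\bar\psi$. This is true in Huber's setting (topological invariance of the \'etale site under nilpotent thickenings, or a Jacobian/Nakayama argument using that $\Omega_{A/\Bpp{\alpha}}$ and $\psi^*\Omega_{\T/\Bpp{\alpha}}$ are free of the same rank and the map between them is an isomorphism mod $t$), but Proposition~\ref{etalebdrp} does not supply it: that proposition concerns \'etale algebras over $\Bpp{\alpha}(K,K^+)$ itself, not over $\T$, so it does not pin down the \'etale structure of $A$ as a $\T$-algebra.
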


\begin{proof}
    Since the claim is local on $X$, we can assume that $X=\Spa(A,A^+)$ where $A$ is standard smooth over $B_\alpha$. Thus, there exists a standard \'etale morphism
    \[\T_d=B_\alpha\langle T_1,T_2,\dots,T_d \rangle\to A.\] Hence, it remains to prove the claim for $\T_d$.

    For any subset $S\subseteq \{1,2,\dots,d\}$, consider the rational localization
    \[\T_{d,S}:=\T_d\langle\tfrac{1}{T_i-\ep_i}:i=1,2,\dots,d\rangle\]
    where $\ep_i=0$ if $i\in S$ and $\ep_i$ otherwise.
    Then $\Spa(\T_d)=\bigcup_{S\subseteq \{1,2,\dots,d\}}\Spa(\T_{d,S})$ and the claim then follows.
\end{proof}

\section{Sheafified $p$-adic Riemann--Hilbert Correspondence}\label{Section:SheafRH}

Throughout this section, we keep the following assumptions.
\begin{itemize}
    \item Let $(K,K^+)$ be a perfectoid Tate--Huber pair over $\Q_p^{\cyc}$. Fix a positive integer $\alpha$ and denote by $\Bp{\alpha}$ the ring $\Bpp{\alpha}(K)$. In addition, denote by $B_{\alpha}^+$ the ring $\Bpp{\alpha}^+(K^+)$.
    \item Let $X$ be a smooth adic space over $\Bp{\alpha}$, and let $\overline{X}$ be the base change $X\times_{\Spa(B_\alpha)}\Spa(K)$. Put $\Omega_X=\Omega_{X/B_\alpha}$.
\end{itemize}

The goal of this section is to prove Theorem \ref{theorem: sheafRH}. Let us recall it:

\begin{theo}
    Let $r$ be a positive integer. There is a canonical isomorphism of sheaves
    $$\mathrm{RH}_{X,r}:R^1\nu_*\big(\mathrm{GL}_r({\mathbb{B}}_{\alpha,\overline{X}})\big)\cong \mathrm{MIC}_r(X)\{-1\},$$
    where
   $\mathrm{MIC}_r(X)\{-1\}$ is the sheaf on $X_{\et}$ associated to the presheaf which sends each $U\in X_{\et}$ to the isomorphic classes of pairs
        $$(M,\nabla:M\to M\otimes_{\inte{U}}\Omega_U\{-1\})$$
        where $M$ is a vector bundle of rank $r$ and $\nabla$ is an integrable connection with respect to the differential operator $\widetilde{d}$.
    Moreover, when $\alpha=1$, this isomorphism is the same as Heuer's sheafified $p$-adic Simpson correspondence.
\end{theo}

Recall the subscript $\alpha$ of the right hand side has already been contained in the assumption of $X$ and the left hand side depends only on the $\overline{X}$.

\subsection{The sheaf of integrable connections}

We will introduce $\mathrm{MIC}_r(X)\{-1\}$. 

\begin{defi}\label{defi: connections}
    Let $X/B_\alpha$ be a smooth adic space.
    
    An \emph{integrable connection} with respect to $d:\inte{X}\to \Omega_X\{-1\}$ is an analytic vector bundle $M$ endows with a morphism of abelian sheaves
    \[\nabla:M\to M\otimes_{\inte{X}}\Omega_X\{-1\},\]
    such that:
    \begin{itemize}
        \item For any open subset $Y\subset X$, $m\in\Gamma(Y,M)$ and $f\in \Gamma(Y,\inte{X})$, we have
        \[\nabla(fm)=f\nabla(m)+m\otimes d(f)\]
        \item The second order differential
        \[\nabla^{(1)}:M\otimes_{\inte{X}}\Omega\{-1\}\to \nabla^{(1)}:M\otimes_{\inte{X}}\Omega^{2}\{-2\}\]
        satisfies that $\nabla^{(1)}\circ \nabla=0$.
    \end{itemize}
\end{defi}

For any $\partial\in\Omega_X^\vee\{1\}$ and $f\in\inte{X}$, we define $\partial(f)$ as the derivation of $f$ by the image of $\partial$ in $\Omega_X^\vee$ induced by the natural map $\inte{X}\{1\}\to \inte{X}$. Furthermore, for any $\partial_1,\ \partial_2\in\Omega^\vee_X\{1\}$, define $[\partial_1,\partial_2]\in \Omega_X^\vee\{1\}$ as the image of the 
Lie bracket of $\partial_1$, $\partial_2$ (in $\Omega_{X}^\vee\{2\}$) in $\Omega_{X}^\vee\{1\}$.

Most of the time, we will use the dual form of the connections, which is also called `a derivation on $M$'.

\begin{defi}\label{defi: dual form connections}
    Let $M$ be a vector bundle on $X$. 
    
    A \emph{derivation} of $\Omega_{X}^\vee\{1\}$ on $M$ is a bilinear map of abelian sheaves
    \[\Omega_{X}^\vee\{1\}\times M\to M:\ (\partial,m)\mapsto \nabla_{\partial}(m)\]
    satisfying that:
    \begin{itemize}
        \item $\nabla_{f\partial}(m)=f\nabla_\partial(m)$ for all $f\in \inte{X},m\in M,\partial\in\Omega^\vee\{1\}.$

        \item $\nabla_\partial(fm)=\partial(f)m+f\nabla_{\partial}(m)$ for all $f\in \inte{X},m\in M,\partial\in\Omega^\vee\{1\}.$

        \item $[\nabla_{\partial_1},\nabla_{\partial_2}]=\nabla_{[\partial_1,\partial_2]}$ for any $\partial_1,\ \partial_2$ in $\Omega_{X}^\vee\{1\}$. The left hand side is the usual commutator of operators.
    \end{itemize}
\end{defi}

\begin{rmk}
    In the following, by a connection we always mean a connection with respect to $d:\inte{X}\to \Omega_X\{-1\}$.
\end{rmk}

The relationship between two notions can be described by the following lemma.

\begin{lemma}
    Let $M$ be a vector bundle on $X$. Then, taking duality induces a bijection between the set of connections $\nabla:M\to M\otimes_{\calO_X}\Omega_{X/B_\alpha}$ and the set of derivations of $\Omega_X\{1\}$ on $M$.
\end{lemma}

\begin{proof}
    This is a standard fact of $D$-modules. See, for example, \cite[Proposition 1.9]{etinghof}.
\end{proof}

Clearly, the restriction of any integrable connection on an \'etale open is integrable.

\begin{defi}[The sheaf $\mathrm{MIC}_{X,r}\{-1\}$]
Make the following definitions.
\begin{enumerate}[(1)]
    \item Let $U\in X_{\et}$ and $r$ be a positive integer, denote by $\mic_r(U)\{-1\}$ the set of isomorphic class of pairs $(M,\nabla:M\to M\otimes_{\calO_U}\Omega_{U/B_\alpha}\{-1\})$ where $M$ is a vector bundle of rank $r$ on $U$ and $\nabla$ is an integrable connection. For any morphism $F:U\to V$ in $X_\et$, the pull-back along $F$ induces a map $\mic_r(V)\{-1\}\to \mic_r(U)\{-1\}$. This defines a presheaf (of pointed sets) on $X_\et$.
    
    \item Keep the notations. Define the sheaf of integrable connections $\mathrm{MIC}_{X,r}\{-1\}$ as the sheafification of $\mic_r(-)\{-1\}$. For any $U\in X_\et$, we will use $\mathrm{MIC}_{X,r}(U)\{-1\}$ to denote $\Gamma(U,\mathrm{MIC}_{X,r}\{-1\})$.
\end{enumerate}
\end{defi}

\begin{defi}
    The sheaf of rank $r$ framed integrable connections $\mathrm{MIC}_{X,r}^{\mathrm{framed}}\{-1\}$ is defined as the subsheaf of $$\underline{\Hom}_\Z\left(\Omega_X^{\vee}\{1\},\underline{\End}_{\Z}(\inte{X}^r)\right)$$ consisting of the morphisms which satisfy the properties in Definition \ref{defi: dual form connections}. Equivalently, it is the sheaf sending each $Y\in\mathcal{X}$ to the isomorphic classes of $$(M,\nabla,\iota:\inte{X}^r|_Y\to M)$$ where $(M,\nabla)$ is an integrable connection and $\iota$ is an isomorphism of modules.
\end{defi}

An alternative definition of $\mathrm{MIC}_{X,r}\{-1\}$ is given by the following.

\begin{lemma}
    We have the equality $\mathrm{MIC}_{X,r}\{-1\}\cong \mathrm{MIC}_{X,r}^{\mathrm{framed}}/\mathrm{GL}_r(\inte{X})$ where $\mathrm{GL}_r(\calO_X)$ acts on $\mathrm{MIC}_{X,r}^{\mathrm{framed}}$ by acting on $\inte{X}^r$.
\end{lemma}

\begin{proof}
    Let $\Pi: \mathrm{MIC}_{X,r}^{\mathrm{framed}}\{-1\}\to \mathrm{MIC}_{X,r}\{-1\}$ be the map defined by forgetting $\iota$. This is a surjective sheaf homomorphism, because for any $U\in X_{\et}$ and any connection $(M,\lambda)$, $M$ is a free module sheaf on an analytic open cover of $U$. The remaining part can be obtained by a simple computation.
\end{proof}

\subsection{Extending cocycles on an \'etale covering}\label{maintech}

{From now on, we fix a system of $p^\infty$ roots of unity $\{\zeta_{p^n}\}$ in $\Q_p^\cyc$, see Subsection \ref{ToricSentheory} for explicit definitions.} Recall that we define
\begin{align*}
    \ep&=(1,\zeta_p,\zeta_{p^2},...)\in K^\flat\\
    t&=\log [\ep]\in \Bdrp{K} \\
\end{align*}

Let $f:\T\to A$ be a $\Bp{\alpha}$-algebra with a toric chart and $\Gamma$ as in Subsection \ref{toricalg}. For any standard map \'etale $A\to A'$, there is a unique extension of the derivation $\partial_{\gamma}$ on $A'$. For any such $A'$, an element $a\in A'$ and $\gamma\in \Gamma$, we define
$$\act{\gamma}{a}=(\exp\partial_\gamma)(a)=\sum_{i=0}^\infty \frac{\partial_\gamma^{\circ i}(a)}{i!}.$$ 
This is indeed a finite sum since $t$ is nilpotent in $A$.
More generally, for a matrix $P$ with coefficients in $A'$, define $\act{\gamma}{P}$ to be the $\exp\partial_\gamma$ action of each coefficient of $P$.

Fix an integer $M$ and define
$$M\Gamma:=\{Mx:x\in\Gamma\}.$$
We will prove the following theorem:
\begin{theo}\label{surject coh}
    For each cocycle $\Phi:M\Gamma\to \mathrm{GL}_m(A)$, there exists a standard \'etale cover $(A,A^+)\to(\widetilde{A},\widetilde{A}^+)$ such that $\Phi$ can be extended to a cocycle on $\Gamma$ with coefficients in $\widetilde{A}$.
\end{theo}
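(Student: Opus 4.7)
The plan is to proceed by induction on $\alpha$. The base case $\alpha=1$ carries the geometric content, while the inductive step is a deformation-theoretic lift along the abelian extension
\[
1\to 1+t^{\alpha-1}M_m(\bar{\tilde{A}})\to \mathrm{GL}_m(\tilde{A}/t^\alpha)\to \mathrm{GL}_m(\tilde{A}/t^{\alpha-1})\to 1.
\]

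For the base case, observe that $\partial_{\gamma_i}=tT_i\partial_i$ vanishes on $\bar{A}=A/t$, so the action $\exp\partial_\gamma$ is trivial on $\bar{A}$ and a cocycle $\Phi:M\Gamma\to \mathrm{GL}_m(\bar{A})$ is nothing but a group homomorphism, entirely determined by the pairwise-commuting matrices $\bar{P}_i:=\Phi(M\gamma_i)$. Extending $\Phi$ to $\Gamma$ amounts to producing a pairwise-commuting tuple $\bar{Q}_i\in \mathrm{GL}_m(\bar{\tilde{A}})$ with $\bar{Q}_i^M=\bar{P}_i$. I would accomplish this by considering the finite-type $\bar{A}$-scheme $Y$ parameterising such tuples: every geometric fibre of $Y\to\mathrm{Spec}\,\bar{A}$ is non-empty, since over an algebraically closed field a commuting tuple of invertible matrices is simultaneously trigonalisable and admits compatible $M$-th roots ($M$ being invertible in characteristic zero). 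A section then exists after a suitable étale cover, built by first splitting the characteristic polynomials of the $\bar{P}_i$, next separating their generalised eigenspaces as projective direct summands of constant rank, and finally extracting the required $M$-th roots by Hensel's lemma on the semisimple and unipotent blocks.

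For the inductive step, reduce $\Phi$ modulo $t^{\alpha-1}$ and apply the inductive hypothesis to extend the reduction over some étale cover $A'/(A/t^{\alpha-1})$. By Proposition \ref{etalebdrp} together with the toric theory of subsection \ref{toricalg}, this cover lifts uniquely to an étale $A$-algebra $\tilde{A}$. The remaining problem is to lift the extended cocycle from $\tilde{A}/t^{\alpha-1}$ to $\tilde{A}/t^\alpha$ in a way that matches the original $\Phi$ after restriction to $M\Gamma$. Picking arbitrary lifts $Q_i\in \mathrm{GL}_m(\tilde{A})$ of the values at the generators, the failures of the cocycle identity together with the matching condition on $M\gamma_i$ live in the square-zero ideal $t^{\alpha-1}M_m(\bar{\tilde{A}})$, and finding adjustments $Q_i\mapsto Q_i(1+t^{\alpha-1}E_i)$ that kill them becomes a linear problem over the coherent $\bar{\tilde{A}}$-module $M_m(\bar{\tilde{A}})$; any residual obstruction is killed by passing to a further étale cover on which the relevant linear equations become solvable.

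The main obstacle I expect is the base case, specifically the step of producing the commuting family of $M$-th roots étale-locally over $\mathrm{Spec}\,\bar{A}$. Simultaneous trigonalisation of a commuting tuple is only a pointwise construction over the base, and spreading it to an étale neighbourhood requires careful separation of the spectra of the $\bar{P}_i$ and control of the ranks of the corresponding generalised eigenspaces. The hypothesis that $M$ is invertible in $\bar{A}$ (since $\bar{A}$ is a $\Q_p$-algebra) is essential: it guarantees that the subsequent Hensel-type $M$-th-root extractions are étale and allows the entire construction to be carried out over a finite étale cover of $\bar{A}$.
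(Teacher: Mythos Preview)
Your overall strategy---induction on $\alpha$, with the geometric content concentrated in the case $\alpha=1$---is close to the paper's, and your base case is essentially correct: working \'etale-locally (equivalently, on strictly Henselian stalks), splitting the characteristic polynomials of the $\bar P_i$ to separate generalised eigenspaces, and then extracting $M$-th roots on each simple block is exactly what the paper does, though it writes down the root explicitly via the power series $(1+U+X)^{1/M}$ rather than invoking a parameter scheme.

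The gap is in your inductive step. You assert that after choosing lifts $Q_i$ of the extended cocycle mod $t^{\alpha-1}$, the failure of both the matching condition (the twisted $M$-th power of $Q_i$ equals $P_i$) and the cocycle relations becomes a linear system in $t^{\alpha-1}M_m(\bar{\tilde A})$, and that ``any residual obstruction is killed by passing to a further \'etale cover.'' But \'etale covers are faithfully flat, and solvability of a linear system over a ring is insensitive to faithfully flat base change: if $Ax=b$ has a solution over $R'$ faithfully flat over $R$, it already has one over $R$. So an \'etale cover cannot kill such an obstruction; either the system is already consistent or it is not, and you have not said why it is.

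The paper resolves this by a different mechanism. It first reduces (via the block-diagonalisation lemmas, working on the stalk $A_x$) to the case where each $\psi_i=\Phi(M\gamma_i)$ is of \emph{simple} form, i.e.\ has a single eigenvalue modulo $\mathfrak m$. In that situation the twisted $M$-th root $\psi_i'$ is \emph{unique} once its reduction mod $t$ is fixed (Lemma~\ref{kaifang}): the linear map $Y\mapsto\sum_{u=1}^M B^{u-1}YB^{M-u}$ is bijective precisely because $B$ is simple. This uniqueness is the missing ingredient: it forces the cocycle relation $\psi_i'\,{}^{[\gamma_i]}\psi_j'=\psi_j'\,{}^{[\gamma_j]}\psi_i'$ to hold, since both sides are twisted $M$-th roots of the same quantity (coming from the original cocycle relation among the $\psi_i$) with the same reduction mod $t$. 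Without first arranging the simple form and exploiting uniqueness, there is no reason the adjustments $E_i$ solving the matching constraint should simultaneously satisfy the commutation constraints, and your deformation-theoretic lift does not go through as stated.
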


We only need to check on each \'etale stalk because a cocycle only depends on its values on generators of $\Gamma$. Suppose $x:\mathrm{Spa}(\bk,\bk^+)\to \mathrm{Spa}(A,A^+)$ is a geometric point and $A_x$ is its stalk. By definition, $A_x$ is a strictly henselian local ring, let $\m$ be the maximal ideal (see \cite{KEDLAYA_2018} Lemma 2.2.3).

\begin{defi}
    For any $r>0$, $$A_x\{t_1,t_2,\dots,t_r\}:=\varinjlim_{I,l}A'\langle \tfrac{t_1}{p^l},\tfrac{t_2}{p^l},\dots, \tfrac{t_r}{p^l}\rangle$$
    where $I$ is the index category consisting of all \'etale neighborhoods
    $$\xymatrix{\mathrm{Spa}(\bk,\bk^+)\ar[r]\ar[dr]_-x&\mathrm{Spa}(A',A'^+)\ar[d]\\ &\mathrm{Spa}(A,A^+)}$$
    and the colimit is taken along $I$ as well as all non-negative integers $l$. For any $f\in A_x\{t_1,t_2,\dots,t_r\}$ and $a_1,a_2,\dots,a_r\in A_x$,
    we say $f(a_1,a_2,\dots, a_r)$ converges if there exists $A'$ and $l$ such that $f\in A'\langle \tfrac{t_1}{p^l},\tfrac{t_2}{p^l},\dots, \tfrac{t_r}{p^l}\rangle$, $a_1,a_2,\dots, a_r\in A'$ and $f(a_1,a_2,\dots,a_r)$ (evaluate $t_i\mapsto a_i$ for each $i$) converges in $A'$.
\end{defi}

\begin{lemma}\label{lemma: converge max ideal}
    Let $(R,R^+)$ be a complete Tate--Huber pair and $f\in R\langle t_1,t_2,\dots,t_r\rangle.$ Then for any $a_1,a_2,\dots, a_r\in R^+$, $$f(a_1,a_2\dots, a_r)$$ converges.
\end{lemma}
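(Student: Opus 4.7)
The plan is to verify that the partial sums $s_N = \sum_{|I|\le N} c_I a^I$, where $f = \sum_I c_I t^I$ with $c_I\to 0$ in $R$ and $a^I := a_1^{i_1}\cdots a_r^{i_r}$, form a Cauchy sequence in $R$; convergence of $f(a_1,\dots,a_r)$ will then follow from completeness of $R$. The essential input is that the set $\{a^I : I\in\mathbb{Z}_{\ge 0}^r\}$ lies in the subring $R^+$, which is bounded by definition of a ring of integral elements.

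Concretely, I would fix a ring of definition $R_0\subset R$ and a pseudo-uniformizer $\vpi\in R_0$, so that the open additive subgroups $\vpi^n R_0$ form a neighborhood basis of $0$. Given $n$, boundedness of $R^+$ produces an open neighborhood $V$ of $0$ in $R$ with $V\cdot R^+\subset \vpi^n R_0$. Since $c_I\to 0$, almost all $c_I$ lie in $V$, so $c_I a^I\in \vpi^n R_0$ for $|I|$ sufficiently large. Because $\vpi^n R_0$ is an additive subgroup, every finite tail sum $\sum_{N<|I|\le M} c_I a^I$ again lies in $\vpi^n R_0$, which gives the Cauchy criterion for $(s_N)_{N}$.

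There is no real obstacle here; this is the standard convergence statement for Tate series evaluated at power-bounded arguments. The only subtle point, as against the naive ``terms tend to zero'' reasoning, is that one must use the Tate-style basis of open additive subgroups in order to upgrade $c_I a^I\to 0$ to genuine convergence of the series $\sum_I c_I a^I$; without additive closure, having the individual terms small would not control the partial sums.
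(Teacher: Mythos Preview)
Your argument is correct and is essentially the same as the paper's: both proofs hinge on the observation that the set of monomials $\{a^I\}$ is bounded (you use that $R^+$ is a bounded subring; the paper phrases it as power-boundedness of each $a_i$ plus stability of boundedness under finite products), and then combine this with $c_I\to 0$ to get convergence. Your version simply spells out the Cauchy criterion in terms of the open subgroups $\vpi^n R_0$, whereas the paper leaves that step implicit.
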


\begin{proof}
    Since $a_i$ is power bounded for each $i$ and the product of finitely many bounded sets is bounded, we see that the set of all monomials of $a_1,a_2,\dots, a_r$ is bounded. Thus, $f(a_1,a_2,\dots, a_r)$ converges.
\end{proof}

\begin{lemma}
    For any $f\in A_x\{t_1,t_2,\dots,t_r\}$, and $a_1,a_2,\dots ,a_r\in \m$, $f(a_1,a_2,\dots,a_r)$ converges.
\end{lemma}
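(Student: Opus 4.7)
The plan is to reduce to the preceding lemma by passing to a rational sub-neighborhood of $x$ on which each $a_i$ becomes power-bounded of the correct scale. Since $a_i\in\m$ means $a_i(x)=0$, each $a_i$ is ``as small as we like'' near $x$, and the task is to promote this geometric statement to an algebraic one by carving out an explicit étale refinement.

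By the filtered-colimit definition of $A_x\{t_1,\dots,t_r\}$, I would first pick an étale neighborhood $(A',A'^+)$ of $x$ together with an integer $l\geq 0$ such that $f\in A'\langle t_1/p^l,\dots,t_r/p^l\rangle$; after enlarging $A'$ one may simultaneously assume that all of the $a_i$ lift to elements of $A'$. Because $p$ is a unit in $\Bpp{\alpha}(K,K^+)$, hence in $A'$, the tuple $(a_1,\dots,a_r,p^l)$ generates the unit ideal, so the rational subset
$$U:=\mathrm{Spa}(A',A'^+)\!\left(\tfrac{a_1,\dots,a_r}{p^l}\right)=\{y:|a_i(y)|\leq|p^l(y)|\ \forall i\}$$
is a legitimate rational localization. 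Since $a_i(x)=0\leq |p^l(x)|$, the point $x$ lies in $U$, so the associated Tate--Huber pair $(A'',A''^+)$ is again an étale neighborhood of $x$ in the system defining the stalk, and by construction $a_i/p^l\in A''^+$ for every $i$.

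Substituting $s_i=t_i/p^l$ identifies $A''\langle t_1/p^l,\dots,t_r/p^l\rangle$ with $A''\langle s_1,\dots,s_r\rangle$ and rewrites $f$ as a power series in the $s_i$ whose coefficients tend to $0$ in $A''$. Applying the previous lemma to $(A'',A''^+)$ with the inputs $s_i=a_i/p^l\in A''^+$ then yields convergence of
$$f(a_1,\dots,a_r)=\sum_\alpha c_\alpha\,a_1^{\alpha_1}\cdots a_r^{\alpha_r}=\sum_\alpha \bigl(p^{l|\alpha|}c_\alpha\bigr)(a_1/p^l)^{\alpha_1}\cdots(a_r/p^l)^{\alpha_r}$$
in $A''$, which is exactly the convergence condition demanded by the definition.

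The main (and essentially only) obstacle is administrative: one has to verify that $U$ sits in the filtered system of étale neighborhoods of $x$, i.e.\ that rational localization is étale in the adic category over $\Bpp{\alpha}(K,K^+)$ and that $U$ genuinely contains $x$. Both reduce to the two observations already used above --- that $p^l$ is a unit (so the rational subset is well defined) and that $a_i(x)=0$ because $a_i\in\m$ --- so no serious technical obstruction arises.
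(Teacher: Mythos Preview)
Your proof is correct and follows essentially the same approach as the paper's. The paper's argument is just a terse version of yours: it observes that $|a_i|_x=0$ and then asserts one may ``choose $A'$ such that $f\in A'\langle t_i/p^l\rangle$ and $a_i\in p^lA'^+$'' before invoking the previous lemma, whereas you make explicit that this $A'$ is obtained by the rational localization $\mathrm{Spa}(A',A'^+)(\frac{a_1,\dots,a_r}{p^l})$ and you spell out the substitution $s_i=t_i/p^l$ needed to literally apply the preceding lemma.
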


\begin{proof}
    Let $|-|_x$ be the (multiplicative) valuation at $x$. Then $|a_i|_x=0$ for any $i$ by definition. Thus, we can choose $A'$ such that $f\in A'\langle \tfrac{t_1}{p^l},\tfrac{t_2}{p^l},\dots, \tfrac{t_r}{p^l}\rangle$ and $a_i\in p^l{A'}^+$. The claim follows by Lemma \ref{lemma: converge max ideal}.
\end{proof}

\begin{lemma}\label{coprime}
    Suppose $R$ is a local ring with maximal ideal $\m$ and residue field $k$. Then the following statements hold.
    \begin{enumerate}[(1)]
        \item For any two monic polynomials $f,g$ which are coprime modulo $\m$, $f$ and $g$ are coprime in $R[x]$.

        \item For $r\geq 2$ and monic polynomials $f_1,f_2,\dots,f_r$ which are coprime to each other modulo $\m$, there exists polynomials $Q_1,Q_2,\dots,Q_r$ such that
    $$\sum_{i=1}^r Q_i\prod_{j\neq i}f_j=1.$$
    \end{enumerate}
\end{lemma}

\begin{proof}
    (1) For any positive integer $d$, let $$P_{R}^d=\{u\in R[x]:\deg u\leq d\}$$ and consider it as a free $R$-module. Then there is an $R$-module homomorphism
    $$F:P_R^{\deg g-1}\times P_{R}^{\deg f-1}\to P_{R}^{\deg f+\deg g-1}$$ such that $$F\left((u,v)\right)=uf+vg.$$ Since $f$ and $g$ are coprime in $k[x]$, $F$ is an isomorphism modulo $\m$. By Nakayama's lemma, $F$ is an isomorphism. Hence there exist $u,v\in R[x]$ such that $uf+vg=1$ and the claim follows.

    (2) We prove by induction on $r$. For $r= 2$, the proposition degenerates to (1). Suppose we have proved the result for $r-1$. By induction hypothesis, there are $q_1,q_2,\dots,q_{r-1}\in R[x]$ such that $$\sum_{i=1}^{r-1}q_i\prod_{j\leq r-1,j\neq i}f_i=1.$$ By (1), there exists $u,v\in R[x]$ such that $$u\prod_{i=1}^{r-1}f_i+vf_r=1.$$ Thus,
    \begin{align*}
        1&=u\prod_{i=1}^{r-1}f_i+vf_r\\ &=u\prod_{i=1}^{r-1}f_i+vf_r\sum_{i=1}^{r-1}\big(q_i\prod_{j\leq r-1,j\neq i}f_i\big)
    \end{align*}
    We may choose $Q_i=vq_i$ for $i<r$ and $Q_r=u$.
\end{proof}

In the following, for any matrix $T$ with coefficients in $A_x$, denote by $\overline{T}$ the projection of $T$ in the space of matrices with coefficients in $\bk$.

\begin{lemma}\label{first lemma}
    Let $A_x$ be as above and $\beta\in \Gamma$. Let $m_1,\dots, m_k$ be positive integers and $\Phi_i\in \mathrm{GL}_{m_i}(A_x)$ for any $i$. Suppose that each $\overline{\Phi}_i$ is upper triangular with only one eigenvalue $\alpha_i$ and $\alpha_i\neq \alpha_j,\ \forall i\neq j$.

    \begin{enumerate}[(1)]
        \item The map $Y\mapsto \Phi_iY-\act{\beta}{Y}\Phi_j$ from the space of $m_i\times m_j$ matrices over $A_x$ to itself is a bijection for any $i\neq j$.

        \item Let $H\in \mathrm{GL}_{m_1+\dots +m_k}(A_x)$ be a matrix of the form $$\begin{pmatrix}\Phi_1 & tX_{12} &\dots &tX_{1k}\\ tX_{21}&\Phi_2&\dots & tX_{2k}\\ \vdots&\vdots &&\vdots\\ tX_{k1} & tX_{k2}&\dots &\Phi_k\end{pmatrix}$$ where $X_{ij}$ has size $m_i\times m_j$. There exists a matrix $M$ of size $(\sum_{i=1}^km_i)\times (\sum_{i=1}^km_i)$ such that $(I+tM)^{-1}H\act{\beta}{(I+tM)}$ is block-diagonal
    $$diag(\Phi_1',\Phi_2',\dots, \Phi_k')$$ where $\Phi_i'$ has size $m_i\times m_i$ and $\Phi_{i}'\equiv \Phi_i\pmod{t}$.
    \end{enumerate}
\end{lemma}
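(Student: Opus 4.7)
For part (1), the plan is to reduce the operator $T\colon Y \mapsto \Phi_i Y - \act{\beta}{Y}\Phi_j$ modulo the $t$-adic filtration to an honest Sylvester operator on the residue field. First observe that $T$ commutes with multiplication by $t$, because $\act{\beta}{t} = t$ (both $\gamma_*$ and $\exp\partial_\gamma$ fix $t$: the former since $\Gamma$ acts trivially on $\Bdrp{K,K^+}$, the latter by the identity $\gamma = \gamma_* \circ \exp(\partial_\gamma)$ established in the previous subsection). Hence $T$ preserves the $t$-adic filtration on $A_x^{m_i\times m_j}$. Since $\partial_\gamma$ is divisible by $t$ on the toric algebra (where $\partial_{\gamma_i} = tT_i\partial_i$), the same holds on any \'etale extension by the uniqueness of derivation lifts, so $\act{\beta}{Y} \equiv Y \pmod t$. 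Therefore on the graded piece $A_x^{m_i\times m_j}/tA_x^{m_i\times m_j}$ the operator $T$ reduces to the genuinely $\bar{A}_x$-linear Sylvester map $Y \mapsto \bar{\Phi}_i Y - Y\bar{\Phi}_j$; its further reduction to $k$ has eigenvalues $\alpha_i - \alpha_j \neq 0$ and is thus bijective. Nakayama upgrades this to bijectivity on $A_x^{m_i\times m_j}/t$, and a five-lemma induction along the $t$-adic filtration (using the corollary that $t^a A/t^{a+1} A \to t^{a+1} A/t^{a+2} A$ is an isomorphism for $a < \alpha - 1$) yields bijectivity of $T$ on the whole module.

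For part (2), I would iterate on the $t$-valuation of the off-diagonal blocks. Writing $H = D + tX$ with $D = \mathrm{diag}(\Phi_1,\dots,\Phi_k)$ and $X$ vanishing on diagonal blocks, the ansatz $U = I + tM$ with $M$ off-diagonal yields
\[U^{-1} H\, \act{\beta}{U} \;=\; D + t\bigl[X + D\act{\beta}{M} - MD\bigr] + O(t^2).\]
Killing the order-$t$ off-diagonal $(i,j)$-block amounts to solving $\Phi_i \act{\beta}{Y} - Y\Phi_j = -X_{ij}$, which is uniquely solvable by the same argument as (1) (the left-hand side again reduces modulo $\mathfrak{m}$ to the same Sylvester operator with disjoint spectra). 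After this correction the off-diagonal is $O(t^2)$; iterating with further corrections $I + t^{r} M^{(r-1)}$ removes the off-diagonal order by order, and the process terminates after at most $\alpha - 1$ steps since $t^\alpha = 0$. The composite of all corrections has the form $I + tM$, and since every individual correction is the identity modulo $t$, the resulting diagonal blocks satisfy $\Phi_i' \equiv \Phi_i \pmod t$.

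The main obstacle is the tension between the $\mathbb{Z}_p$-linear action $Y \mapsto \act{\beta}{Y}$ and the $A_x$-module structure on matrices: a priori $T$ is not $A_x$-linear, so Nakayama does not apply directly. The resolution rests on the twin facts $\act{\beta}{t} = t$ and $\act{\beta}{Y} \equiv Y \pmod t$, which together decouple the problem into an honestly $\bar{A}_x$-linear Sylvester problem on each graded layer of the $t$-adic filtration. In (2), one must additionally verify that each subsequent correction $I + t^r M^{(r-1)}$ neither reintroduces off-diagonal terms at lower $t$-orders nor disturbs the mod-$t$ reduction of the diagonal blocks; both follow from the $t$-adic expansion since an order-$r$ correction contributes only at orders $\geq r$.
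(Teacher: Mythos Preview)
Your proposal is correct and follows essentially the same route as the paper: part (1) by induction on $\alpha$ via the $t$-adic filtration, with the base case reducing (since $\act{\beta}{Y}\equiv Y\pmod t$) to an $\bar A_x$-linear Sylvester operator that is invertible by Nakayama, and the inductive step by the five-lemma; part (2) by successive corrections $I+t^rM^{(r-1)}$ solving a Sylvester-type equation at each $t$-order until $t^\alpha=0$. The only cosmetic differences are that the paper first reduces (2) to $k=2$ and clears one off-diagonal corner at a time, and that the operator you actually need in (2), namely $Y\mapsto \Phi_i\,\act{\beta}{Y}-Y\Phi_j$, is not literally the one in (1) but (as you note) is handled by the identical argument.
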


\begin{proof}
    (1) We proceed by induction on $\alpha$ (recall $X$ is smooth over $\Bp{\alpha}$). For $\alpha=1$, $\exp\partial_\beta$ is the identity and by Nakayama's lemma, it suffices to prove that 
    the $k$-linear endomorphism \[Y\mapsto \overline{\Phi}_iY-Y\overline{\Phi}_j\]
    on the space of $m_i\times m_j$ matrices is an isomorphism.
    Let $\phi_i^l$ be the operator $Y\mapsto \overline{\Phi}_iY-\alpha_iY$ and $\phi_j^r$ be $$Y\mapsto Y\overline\Phi_j-\alpha_jY.$$ Then $$\overline\Phi_iY-Y\overline\Phi_j=(\alpha_i-\alpha_j)Y+\phi_i^l(Y)-\phi_j^r(Y).$$ Note that $\phi_i^l$ and $\phi_j^r$ are nilpotent operators and commute with each other, hence $$(\alpha_i-\alpha_j)+\phi_i^l-\phi_j^r$$ is invertible.

    Assume the claim holds for $\alpha-1$. Note that $\exp\partial_\beta$ acts trivially on $A_x/t$. Consider the exact sequence $$0\to \Mat_{m_i\times m_j}(A_x/t)\xrightarrow{t^{\alpha-1}} \Mat_{m_i\times m_j}(A_x)\xrightarrow{\mod t^{\alpha-1}} \Mat_{m_i\times m_j}(A_x/t^{\alpha-1})\to 0$$ the claim follows from the five lemma.

    (2) We first consider the case $k=2$. Let $$H=\begin{pmatrix}\Phi_1 & t^lX_{12}'\\ tX_{21}&\Phi_2\end{pmatrix}.$$
    By induction on $l\geq 1$, we claim that there exists a matrix $M_l$ such that $$(I+tM_l)^{-1}H\act{\beta}{(I+tM_l)}=\begin{pmatrix}\Phi_1' & t^lX_{12}'\\ tX_{21}&\Phi_2'\end{pmatrix}$$ for some matrix $X_{12}'$ of the same size as $X_{12}$, such that $\Phi_1'\equiv\Phi_1\pmod{t}$ and $\Phi_2'\equiv\Phi_2\pmod{t}$. Suppose we have proved the case for $l-1$. By an elementary calculation, $$\begin{pmatrix}I & t^{l-1}Y\\ 0&I\end{pmatrix}^{-1}\begin{pmatrix}\Phi_1 & t^{l-1}X_{12}\\ tX_{21}&\Phi_2\end{pmatrix} {\begin{pmatrix}I & t^{l-1}\act{\beta}{Y}\\ 0&I\end{pmatrix}}\equiv \begin{pmatrix}\Phi_1 & t^{l-1}(X_{12}+\Phi_1Y-Y\Phi_2)\\ tX_{21}&\Phi_2\end{pmatrix}\pmod{t^l}$$ and the bottom-left $m_2\times m_1$ block of
    $$\begin{pmatrix}I & t^{l-1}Y\\ 0&I\end{pmatrix}^{-1}\begin{pmatrix}\Phi_1 & t^{l-1}X_{12}\\ tX_{21}&\Phi_2\end{pmatrix} {\begin{pmatrix}I & t^{l-1}\act{\beta}{Y}\\ 0&I\end{pmatrix}}$$ is equal to $tX_{21}$.
    By (1), there exists a $Y$ such that $X_{12}+\Phi_1Y-Y\Phi_2\equiv 0 \pmod{t}$ and this proves the claim. Taking $l=\alpha$ (recall $t^\alpha=0$), we obtain a matrix $M_\alpha$ such that $$(I+tM_\alpha)^{-1}H\act{\beta}{(I+tM_\alpha)}=\begin{pmatrix}\Phi_1' & 0\\ tX_{21}&\Phi_2'\end{pmatrix}.$$ Then by an analogous induction argument, there exists a matrix $M_\alpha'$ such that
    $$(I+tM_{\alpha}')^{-1}\begin{pmatrix}\Phi_1' & 0\\ tX_{21}&\Phi_2'\end{pmatrix}\act{\beta}{(I+tM_\alpha')}=\begin{pmatrix}\Phi_1'' & 0\\0&\Phi_2''\end{pmatrix}$$ for some $\Phi_1''\equiv\Phi_1\pmod{t}$ and $\Phi_2''\equiv\Phi_2\pmod{t}$.

    For the general case, we use induction on $k$. Assume the result holds for $k-1\geq 2$; we now consider the case $k$. Let
    $$H=\begin{pmatrix}\Phi_1 & tX_{12} &\dots &tX_{1k}\\ tX_{21}&\Phi_2&\dots & tX_{2k}\\ \vdots&\vdots &&\vdots\\ tX_{k1} & tX_{k2}&\dots &\Phi_k\end{pmatrix}.$$
    Consider the $(\sum_{i=1}^{k-1}m_i)\times(\sum_{i=1}^{k-1}m_i)$ matrix
    $$H_1=\begin{pmatrix}\Phi_1 & tX_{12} &\dots &tX_{1,k-1}\\ tX_{21}&\Phi_2&\dots & tX_{2,k-1}\\ \vdots&\vdots &&\vdots\\ tX_{k-1,1} & tX_{k-1,2}&\dots &\Phi_{k-1}\end{pmatrix}.$$
    By the induction hypothesis, there is a $(\sum_{i=1}^{k-1}m_i)\times(\sum_{i=1}^{k-1}m_i)$ matrix $M_1$ such that $$(I_{\sum_{i=1}^{k-1}m_i}+tM_1)^{-1}H_1\act{\beta}{(I_{\sum_{i=1}^{k-1}m_i}+tM_1)}=\begin{pmatrix}\Phi_1' & 0 &\dots &0\\ 0&\Phi_2'&\dots & 0\\ \vdots&\vdots &&\vdots\\ 0 & 0&\dots &\Phi_{k-1}'\end{pmatrix}$$
    where $\Phi_i'\equiv \Phi_i\pmod{t}$. Hence if we let
    $$M_1'=\begin{pmatrix}M_1 &0\\0&0\end{pmatrix}$$
    be a $(\sum_{i=1}^km_i)\times(\sum_{i=1}^km_i)$ matrix whose top-left $\sum_{i=1}^{k-1}m_i$ square corner is equal to $M_1$ and the other elements are all $0$. Then
    $$(I_{\sum_{i=1}^{k}m_i}+tM_1')^{-1}H\act{\beta}{(I_{\sum_{i=1}^{k}m_i}+tM_1')}=\begin{pmatrix}\Phi_1' & 0 &\dots &0 &tX_{1k}\\ 0&\Phi_2'&\dots & 0& tX_{2k}\\ \vdots&\vdots &&\vdots &\vdots\\ 0 & 0&\dots &\Phi_{k-1}' &tX_{k-1,k}\\ tX_{k1}& tX_{k2} &\dots &tX_{k,k-1} &\Phi_k\end{pmatrix}.$$
    Next we apply the induction hypothesis to
    $$H_2=\begin{pmatrix}\Phi_2' & 0 &\dots &0 &tX_{2k}\\ 0&\Phi_3'&\dots & 0& tX_{3k}\\ \vdots&\vdots &&\vdots &\vdots\\ 0 & 0&\dots &\Phi_{k-1}' &tX_{k-1,k}\\ tX_{k2}& tX_{k3} &\dots &tX_{k,k-1} &\Phi_k\end{pmatrix}.$$
    Hence there exists a matrix $M_2\in M_{\sum_{i=2}^km_i}(A_x)$ such that
    $$(I_{\sum_{i=2}^{k}m_i}+tM_2)^{-1}H_2\act{\beta}{(I_{\sum_{i=2}^{k}m_i}+tM_2)}=\begin{pmatrix}\Phi_2'' & 0 &\dots &0\\ 0&\Phi_3''&\dots & 0\\ \vdots&\vdots &&\vdots\\ 0 & 0&\dots &\Phi_{k}'' \end{pmatrix}$$
    where $\Phi_i''\equiv \Phi_i\pmod{t}$. Let
    $$M_2'=\begin{pmatrix}0 &0\\0&M_2\end{pmatrix}$$ be the matrix in $M_{\sum_{i=1}^km_i}(A_x)$ whose bottom-right $\sum_{i=2}^{k}m_i$ square corner is equal to $M_2$ and the other elements are all $0$. We have
    \begin{align*}&(I_{\sum_{i=1}^{k}m_i}+tM_2')^{-1}(I_{\sum_{i=1}^{k}m_i}+tM_1')^{-1}H\act{\beta}{(I_{\sum_{i=1}^{k}m_i}+tM_1')}\act{\beta}{(I_{\sum_{i=1}^{k}m_i}+tM_2')}\\
    &=(I_{\sum_{i=1}^{k}m_i}+tM_2')^{-1}\begin{pmatrix}\Phi_1' & 0 &\dots &0 &tX_{1k}\\ 0&\Phi_2'&\dots & 0& tX_{2k}\\ \vdots&\vdots &&\vdots &\vdots\\ 0 & 0&\dots &\Phi_{k-1}' &tX_{k-1,k}\\ tX_{k1}& tX_{k2} &\dots &tX_{k,k-1} &\Phi_k\end{pmatrix}\act{\beta}{(I_{\sum_{i=1}^{k}m_i}+tM_2')}\\
    &=\begin{pmatrix}\Phi_1' & 0 &\dots &0 &tX_{1k}\\ 0&\Phi_2''&\dots & 0& 0\\ \vdots&\vdots &&\vdots &\vdots\\ 0 & 0&\dots &\Phi_{k-1}'' &0\\ tX_{k1}& 0 &\dots &0 &\Phi_k''\end{pmatrix}.\end{align*}
    Finally, we apply the case $k=2$ to
    $$\begin{pmatrix}\Phi_1' &tX_{1k}\\tX_{k1}& \Phi_k''\end{pmatrix}$$ and the claim follows.
\end{proof}

Recall we fix a basis $\{\gamma_i:i=1,2,\dots,d\}$ of $\Gamma$ such that
\begin{align*}\gamma_{i}(T_j)=\left\{\begin{matrix}[\ep]T_j &\text{if $i=j$}\\ T_j &\text{otherwise}\end{matrix}\right.\end{align*}

\begin{defi}[Simple form \& Canonical form]
For $\Phi\in \mathrm{GL}_m(A_x)$, we say that
\begin{enumerate}[(1)]
    \item $\Phi$ is of the simple form (or $\Phi$ is simple) if $\overline\Phi$ is upper triangular with constant diagonals.

    \item $\Phi$ is of the strict simple form (or $\Phi$ is strictly simple) if $\Phi$ is simple and the diagonals of $\overline{\Phi}$ are all equal to $1$.
\end{enumerate}
\end{defi}

\begin{lemma}\label{biaozhunhua}
    For each $i$, let $\psi_i=\Phi_{M\gamma_i}\in \mathrm{GL}_m(A_x)$. There exists a matrix $V\in \mathrm{GL}_m(A_x)$ and integers $d_1,\dots,d_u>0$ satisfying $\sum_{i=1}^ud_i=m$, such that for each $i$ $$V^{-1}\psi_i\act{M\gamma_i}{V}$$ is of the form $$\begin{pmatrix} \Phi_{1} &0 &\dots &0 \\ 0 &\Phi_{2} &\dots &0 \\ \vdots &\vdots &&\vdots\\ 0&0&\dots &\Phi_{u}\end{pmatrix}$$ where $\Phi_j$ is a $d_j\times d_j$ matrix such that $\overline{\Phi}_j$ is upper triangular with constant diagonal.
\end{lemma}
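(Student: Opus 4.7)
The plan is to reduce modulo the maximal ideal $\m$ of $A_x$ and set up the joint-eigenvalue block decomposition over the separably closed residue field $k=A_x/\m$, then lift this block structure back to $A_x$ one characteristic polynomial factor at a time, using Hensel's lemma together with the Sylvester-type bijectivity from Lemma \ref{first lemma}(1).

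Because $t\in\m$ and each $\partial_\gamma$ carries a factor of $t$, the action $a\mapsto\act{\gamma}{a}=\exp\partial_\gamma(a)$ reduces to the identity modulo $\m$, so the cocycle condition collapses to pairwise commutativity of $\bar\psi_1,\dots,\bar\psi_d\in\mathrm{GL}_m(k)$. Since $A_x$ is strictly Henselian, $k$ is separably closed, and commuting matrices over $k$ admit a simultaneous upper triangularization; reordering the basis to group the diagonal entries according to their joint eigenvalue vector $(\alpha_{1,j},\dots,\alpha_{d,j})_{j=1}^u$ fixes the block sizes $d_1,\dots,d_u$ that will appear in the conclusion.

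To lift to $A_x$, I would induct on $d$ using the characteristic polynomial $p_1(x)\in A_x[x]$ of $\psi_1$. Its reduction factors as $\prod_\beta(x-\beta)^{e_\beta}$ in $k[x]$, and strict henselianity gives a coprime lift $p_1=\prod_\beta q_\beta$ in $A_x[x]$. Cayley--Hamilton together with the Chinese Remainder Theorem then supplies a $\psi_1$-invariant direct sum $A_x^m=\bigoplus_\beta M_\beta$ with $M_\beta:=\ker q_\beta(\psi_1)$ free of rank $e_\beta$. Pick a basis of each $M_\beta$ that lifts a simultaneous upper triangularization of the matrices $(\bar\psi_j|_{M_\beta/\m M_\beta})_{j=1}^d$; in this basis $B_\beta:=\psi_1|_{M_\beta}$ reduces to an upper triangular matrix with constant diagonal $\beta$. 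Writing $\psi_i=(X_{\beta\beta'})$ block-wise, the cocycle identity $\psi_1\cdot\act{M\gamma_1}{\psi_i}=\psi_i\cdot\act{M\gamma_i}{\psi_1}$ reads off on the $(\beta,\beta')$-block as the twisted Sylvester equation $B_\beta\cdot\act{M\gamma_1}{X_{\beta\beta'}}=X_{\beta\beta'}\cdot\act{M\gamma_i}{B_{\beta'}}$. Substituting $Y=\act{M\gamma_1}{X_{\beta\beta'}}$ casts this in the shape $\Phi_iY-\act{-M\gamma_1}{Y}\Phi_j=0$ with $\Phi_i=B_\beta$ and $\Phi_j=\act{M\gamma_i}{B_{\beta'}}$; since the action is trivial modulo $t$, $\Phi_j$ still reduces to upper triangular with constant diagonal $\beta'$, and Lemma \ref{first lemma}(1) forces $X_{\beta\beta'}=0$ for $\beta\ne\beta'$. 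Consequently every $\psi_i$ is simultaneously block diagonal along $\{M_\beta\}$.

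Restricting the cocycle to each $M_\beta$ and rerunning the same argument with $\psi_2$ refines $M_\beta$ according to the distinct $k$-eigenvalues of $\psi_2|_{M_\beta}$; iterating with $\psi_3,\dots,\psi_d$ yields the full joint-eigenvalue decomposition in at most $d$ steps. By construction, in the chosen basis each $\bar\psi_i$ is upper triangular with constant diagonal on every final block, and concatenating the bases produces the required $V\in\mathrm{GL}_m(A_x)$. The main obstacle is the cocycle-to-Sylvester step: one has to carefully match the twist by $\act{M\gamma_i}{Y}$ against the parameters of Lemma \ref{first lemma}(1) and verify that the spectral gap $\beta\ne\beta'$ is preserved under the twist, a fact which rests squarely on the action being trivial modulo $t$. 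Once this is settled, the remaining inductive refinement and lifting of bases is standard Henselian bookkeeping.
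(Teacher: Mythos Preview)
Your overall strategy---Hensel factorization of the characteristic polynomial, the Cayley--Hamilton/CRT decomposition $A_x^m=\bigoplus_\beta\ker q_\beta(\psi_1)$, and then a Sylvester-type argument via Lemma~\ref{first lemma}(1) to propagate block-diagonality to the remaining $\psi_i$---is exactly the skeleton of the paper's proof. The gap is in the passage from the cocycle identity to your ``twisted Sylvester equation'' $B_\beta\cdot\act{M\gamma_1}{X_{\beta\beta'}}=X_{\beta\beta'}\cdot\act{M\gamma_i}{B_{\beta'}}$.

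The decomposition $\bigoplus_\beta M_\beta$ is $\psi_1$-invariant, so in your adapted basis $V$ the \emph{untwisted} conjugate $V^{-1}\psi_1V$ is block diagonal. But the cocycle relation $\psi_1\,\act{M\gamma_1}{\psi_i}=\psi_i\,\act{M\gamma_i}{\psi_1}$ is preserved only under the \emph{twisted} change of cocycle $\psi_k\mapsto V^{-1}\psi_k\,\act{M\gamma_k}{V}$, not under ordinary conjugation. If you set $\tilde\psi_k=V^{-1}\psi_kV$ and try to read the cocycle identity in the new basis, extra factors $W_k:=V^{-1}\,\act{M\gamma_k}{V}\equiv I\pmod t$ appear on both sides, and these $W_k$ have no reason to be block diagonal. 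Consequently the $(\beta,\beta')$-block of the identity is \emph{not} the clean equation you wrote, and Lemma~\ref{first lemma}(1) cannot be applied directly to force $X_{\beta\beta'}=0$ over $A_x$ (you only get it modulo $t$). For the same reason your final $V$ would give $V^{-1}\psi_iV$ block diagonal rather than the required $V^{-1}\psi_i\,\act{M\gamma_i}{V}$.

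The paper repairs precisely this point. After the CRT step it observes that the twisted conjugate $V_1^{-1}\psi_{i_0}\,\act{M\gamma_{i_0}}{V_1}$ is only block diagonal modulo $t$, and then invokes Lemma~\ref{first lemma}(2) to replace $V_1$ by $V_1(I+tM)$ so that the twisted conjugate becomes \emph{exactly} block diagonal. Once that is done, the twisted conjugates $\tilde\psi_i=V_1^{-1}\psi_i\,\act{M\gamma_i}{V_1}$ still satisfy the cocycle identity, and now its $(\beta,\beta')$-block really is a Sylvester equation to which Lemma~\ref{first lemma}(1) applies, forcing the off-diagonal blocks of every $\tilde\psi_i$ to vanish over $A_x$. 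Your iteration over $\psi_1,\psi_2,\dots$ (versus the paper's induction on $m$) is then harmless, but each refinement step needs this same correction via Lemma~\ref{first lemma}(2). You flagged the ``cocycle-to-Sylvester step'' as the main obstacle; the missing ingredient is exactly this application of part (2), not just part (1), of Lemma~\ref{first lemma}.
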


\begin{proof}
    We proceed by induction on $m=\mathrm{rank}(V)$. If $m=1$, there is nothing to prove. Assume the lemma holds for $m\leq r-1$; we now prove it for $m=r$. Note that the cocycle condition for $\Phi$ implies that $\psi_i\psi_j\equiv\Phi_{M\gamma_i+M\gamma_j}\equiv\psi_j\psi_i\pmod{t}$ for any $i,j$.

    If each $\overline{\psi_i}$ has a unique eigenvalue, since they commute with each other, there is a matrix $\overline{V}\in \mathrm{GL}_r(\bk)$ such that $\overline{V}^{-1}\overline{\psi_i}\overline{V}$ is upper triangular for all $i$. Thus any lifting of $\overline{V}$ in $\mathrm{GL}_r(A_x)$ satisfies the property.

    Suppose there is a $\psi_{i_0}$ such that $\overline{\psi_{i_0}}$ has at least two eigenvalues. Let $$P(T):=\det(TI_r-\psi_{i_0})\in A_x[T]$$ be the characteristic polynomial. Since $A_x$ is strictly henselian, we may assume $$P=P_1P_2\dots P_s$$ where each $P_i$ is monic and equals some $(T-\alpha_i)^{m_i}$ modulo $\m$ with $\alpha_i\neq \alpha_j$ for any $i\neq j$. By Lemma \ref{coprime}, there exist $Q_1,\dots, Q_s\in A_x[T]$ such that
    $$\sum_{i=1}^s Q_i\prod_{j\neq i}P_j=1.$$
    Hence, $$I=\sum_{i=1}^s Q_i(\psi_{i_0})\prod_{j\neq i}P_j(\psi_{i_0}).$$ Note that $\prod_{j=1}^sP_j(\psi_{i_0})=P(\psi_{i_0})=0$ by the Cayley–Hamilton theorem, we deduce that $$A_x^{m}=\oplus_{i=1}^s \ker P_{i}(\psi_{i_0}).$$
    In matrix terms, there exists a $V_1'\in \mathrm{GL}_m(A_x)$ such that $V_1'^{-1}\psi_{i_0}V_1'$ is of the form
    $$diag(\psi'_{i_0,1},\dots, \psi'_{i_0,s})$$
    such that each $\psi'_{i_0,j}$ is simple. Moreover, the eigenvalues of $\overline{\psi'_{i_0,j_1}}$ and $\overline{\psi'_{i_0,j_2}}$ are distinct whenever $j_1\neq j_2$. Hence,  $V_1'^{-1}\psi_{i_0}\act{M\gamma_{i_0}}{V_1'}$ is of the form
    $$\begin{pmatrix}\widetilde{\psi}_{i_0,1,1} &t\widetilde{\psi}_{i_0,1,2} &\dots &t\widetilde{\psi}_{i_0,1,s}\\ t\widetilde{\psi}_{i_0,2,1} &\widetilde{\psi}_{i_0,2,2} &\dots &t\widetilde{\psi}_{i_0,2,s}\\ \vdots &\vdots &&\vdots\\ t\widetilde{\psi}_{i_0,s,1}&t\widetilde{\psi}_{i_0,s,2}&\dots &\widetilde{\psi}_{i_0,s,s}\end{pmatrix}$$
    By Lemma \ref{first lemma}, we can choose a suitable $V_1\equiv V_1'\pmod{t}$ such that $V_1^{-1}\psi_{i_0}\act{M\gamma_{i_0}}{V_1}$ is of the form
    $$\begin{pmatrix}\widetilde{\psi}_{i_0,1,1} &0 &\dots &0\\ 0 &\widetilde{\psi}_{i_0,2,2} &\dots &0\\ \vdots &\vdots &&\vdots\\ 0&0&\dots &\widetilde{\psi}_{i_0,s,s}\end{pmatrix}$$

    For any $i\neq i_0$, write $V_1^{-1}\psi_{i}\act{M\gamma_i}{V_1}$ in the same block decomposition as $V_1^{-1}\psi_{i_0}\act{M\gamma_{i_0}}{V_1}$, say
    $$V_1^{-1}\psi_{i}\act{M\gamma_i}{V_1}=(\widetilde{\psi}_{i,u,v})_{1\leq u,v\leq d}.$$
    Then since $\psi_{i}\act{M\gamma_i}{\psi_{i_0}}=\Phi_{M\gamma_i+M\gamma_{i_0}}=\psi_{i_0}\act{M\gamma_{i_0}}{\psi_i}$ (the cocycle condition), $$\widetilde{\psi}_{i_0,u,u}\act{M\gamma_{i_0}}{\widetilde{\psi}_{i,u,v}}=\widetilde{\psi}_{i,u,v}\act{M\gamma_{i}}{\widetilde{\psi}_{i_0,v,v}}$$ for any $u,v$. Thus $\widetilde{\psi}_{i,u,v}=0$ for any $u\neq v$ by Lemma \ref{first lemma}. Finally, we apply the induction hypothesis to each diagonal block to complete the proof.
\end{proof}

\begin{lemma}
    Let $U\in \Mat_{n\times n}(A_x)$ be a nilpotent matrix such that $U^n=0$.

    \begin{enumerate}[(1)]
        \item Let $$(1+U+X)^{\tfrac{1}{M}}:=\sum_{a\geq 0}\binom{\tfrac{1}{M}}{a}(U+X)^a$$ which is viewed as an infinite sum of formal power series of the coordinates of $X=(x_{ij})_{1\leq i,j\leq n}$. Then it converges under the $(x_{ij}:1\leq i,j\leq n)$-adic topology in $$A_x[[x_{ij}:1\leq i,j\leq n]].$$

        \item The sum $(1+U+X)^{\tfrac{1}{M}}\in A_x[[x_{ij}:1\leq i,j\leq n]]$ lies in $$A_x\{x_{ij}:i,j=1,2,\cdot, n\}.$$
    \end{enumerate}
\end{lemma}

\begin{proof}
    (1) We expand $(U+X)^a$ as a noncommutative polynomial of $U$ and $X$. Note that each monomial is of the form
    $$U^{i_1}X^{j_1}U^{i_2}\dots U^{i_w}X^{j_w}\dots (i_1\geq 0,i_w>0\text{ for any $w>1$},j_w>0 \text{ for any $t>0$}).$$ Let $\deg_U=i_1+i_2+\dots$ and $\deg_X=j_1+j_2+\dots$. Since $U^n=0$, the monomial is nonzero only if each $i_w<n$. This implies all nonzero terms satisfy
    $$\deg_U\leq (n-1)(\deg_X+1)$$ and thus $\deg_X\geq \tfrac{a-n+1}{n}$. Hence the polynomial (of $x_{ij}$) $(U+X)^a$ lies in $$(x_{ij}:1\leq i,j\leq n)^{[\tfrac{a}{n}]-1}A_x[x_{ij}:1\leq i,j\leq n]$$ for all $a$. The claim follows.

    (2) Suppose $U\in \Mat_{n\times n}(A_x)$ for some standard \'etale neighborhood $\Spa(A',A'^+)$ of $x$.

    We expand $\binom{\tfrac{1}{M}}{a}(U+X)^a$ as a noncommutative polynomial of $U$ and $X$. Recall we have proved that each nonzero term in the expansion of $(U+X)^a$ satisfies $\deg_U\leq (n-1)(\deg_X+1)$. Hence $\binom{\tfrac{1}{M}}{a}(U+X)^a$ is a finite sum of
    $$cU^{i_1}X^{j_1}U^{i_2}\dots U^{i_w}X^{j_w}\dots$$ where $$c=\binom{\tfrac{1}{M}}{i_1+j_1+i_2+\dots }$$ and $$\deg_U=\sum_w i_w\leq (n-1)(\sum_{w}j_w+1)=(n-1)(\deg_X+1).$$
    Note that $$c=\binom{\tfrac{1}{M}}{\deg_U+\deg_X}=\tfrac{\prod_{b=1}^{\deg_U+\deg_X}\left(1-M(b-1)\right)}{M^{\deg_U+\deg_X}(\deg_U+\deg_X)!}$$ hence
    $$v_p(c)\geq -v_p(M)(\deg_U+\deg_X)-\tfrac{\deg_X+\deg_U}{p-1}\geq J\deg_X+H$$ for some constants $J$ and $H$ depending only on $M$ and $n$. Suppose the coefficients of $U$ lie in $p^{-l}A'_0$ for some open bounded ring $A'_0\subseteq A'$ and some integer $l$, the above argument implies the coefficient of $X^d$ lies in $p^{-\left(J+(n-1)l\right)d-H-nl+l}A'_0$ and thus $$(1+U+X)^{\tfrac{1}{M}}\in A'\langle \tfrac{x_{ij}}{p^{J+(n-1)l}}:i,j=1,2,\dots, n\rangle.$$
\end{proof}

Thus for any $U\in Mat_{n\times n}(A_x)$ such that $U^n=0$ and $X\in \Mat_{n\times n}(\m)$, we can define
$$(1+U+X)^{\tfrac{1}{M}}=\sum_{a\geq 0}\binom{\tfrac{1}{M}}{a}(U+X)^a.$$ By its form, this depends only on $U+X$, not on the choice of $U$ and $X$. Hence we define $\phi^{\tfrac{1}{M}}$ for any strictly simple $\phi$ as $\left(1+(\phi-1)\right)^{\tfrac{1}{M}}$.

\begin{lemma}\label{kaifang}
    Suppose $\beta\in \Gamma$. For any $\Phi\in \mathrm{GL}_m(A_x)$, there exists a $\Phi_1\in \mathrm{GL}_m(A_x)$ such that
    $$\Phi_1\cdot\act{\beta}{\Phi_1}\cdot\ \dots\ \cdot\act{(M-1)\beta}{\Phi_1}=\Phi.$$

    Moreover, if $\Phi$ and $\Phi_2$ are of the simple form such that $\Phi_2^M\equiv \Phi\mod t$, there exists a unique $\Phi_1\equiv\Phi_2\mod t$ satisfying the above property.
\end{lemma}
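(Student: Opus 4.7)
The plan is to prove both existence and uniqueness by induction on $\alpha$, the exponent cutting out $\Bpp{\alpha}$. For the base case $\alpha=1$, I will construct $\Phi_0 := \Phi_1\bmod t \in \mathrm{GL}_m(\bar{A}_x)$ satisfying $\Phi_0^M = \Phi\bmod t$ as follows. Using the strict Henselianness of $\bar{A}_x$ together with the block-decomposition idea from the preceding lemma applied to the single matrix $\Phi\bmod t$, reduce (by conjugation in $\bar{A}_x$) to a block-diagonal form whose blocks each have a single residue eigenvalue $\alpha_i \in k$. Within such a block, write it as $\alpha_i^{(0)}(I + U^{(i)} + X^{(i)})$ with $\alpha_i^{(0)} \in \bar{A}_x^{\times}$ a unit lift of $\alpha_i$, $U^{(i)}$ a strict upper-triangular (hence nilpotent) lift of the residual nilpotent part, and $X^{(i)} \in \m \cdot \mathrm{Mat}$; then the previous binomial-series lemma defines $(I + U^{(i)} + X^{(i)})^{1/M}$, and $(\alpha_i^{(0)})^{1/M}$ is supplied by Hensel (using that $M$ is a unit in the $\Q_p$-algebra $\bar{A}_x$). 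Assembling gives $\Phi_0$ whose residue eigenvalues $\beta_i$ satisfy $\beta_i^M = \alpha_i$ with the \emph{consistency} property that $\alpha_i = \alpha_j$ forces $\beta_i = \beta_j$.

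For the inductive step, given a solution modulo $t^{\alpha-1}$, I lift to any $\tilde{\Phi}_1 \in \mathrm{GL}_m(A_x/t^{\alpha})$ and compute $\prod_{i=0}^{M-1}\act{i\beta}{\tilde{\Phi}_1} = \Phi + t^{\alpha-1}E_0$. I correct by $\Phi_1 = \tilde{\Phi}_1(I + t^{\alpha-1}F)$; since $\partial_\beta$ carries a factor of $t$ (recall $\partial_{\gamma_j} = tT_j\partial_j$), we have $\act{i\beta}{F} \equiv F \pmod{t}$, so expanding the product modulo $t^{\alpha}$ and extracting the $t^{\alpha-1}$-coefficient shows the problem reduces to solving the linear equation
\begin{equation*}
\Phi_0 \cdot S(F) = -E_0, \qquad S(F) := \sum_{j=0}^{M-1}\Phi_0^{j}F\Phi_0^{M-1-j},
\end{equation*}
in $\mathrm{Mat}_m(\bar{A}_x)$. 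Invertibility of $S$ will then furnish the unique correction $F$ and close the inductive step.

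The hard part is showing $S$ is invertible on $\mathrm{Mat}_m(\bar{A}_x)$, which by Nakayama reduces to checking modulo $\m$. Decomposing $\mathrm{Mat}_m(k) = \bigoplus_{i,j}\mathrm{Hom}(V_j, V_i)$ along the generalized eigenspaces of $\Phi_0 \bmod \m$, the operator $S$ preserves this splitting. A Vandermonde identity shows on a diagonal block that $S \equiv M\beta_i^{M-1}\cdot\mathrm{id} + (\text{nilpotent in } L_{N_0}, R_{N_0})$, which is a unit plus nilpotent and hence invertible; on an off-diagonal block a geometric summation gives the leading scalar $(\beta_i^M - \beta_j^M)/(\beta_i - \beta_j) = (\alpha_i - \alpha_j)/(\beta_i - \beta_j)$, again invertible because the consistent-root construction forces distinct $\beta_i$ to correspond to distinct $\alpha_i$. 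Thus $S$ is invertible, which both closes the induction (giving existence) and, with $\Phi_0 = \Phi_2$ in the canonical case, delivers uniqueness: any second solution $\Phi_1' = \Phi_1(I + t^k Z)$ with minimal $k$ and $Z \not\equiv 0 \pmod{t}$ would force $S(Z \bmod t) = 0$ by the same linearization modulo $t^{k+1}$, contradicting invertibility.
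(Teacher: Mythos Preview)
Your proof is correct and follows the same overall architecture as the paper: induction on $\alpha$, a binomial-series $M$-th root for the base case $\alpha=1$ (where $\exp\partial_\beta=\mathrm{id}$), and for the inductive step a linearization reducing to the bijectivity of $Y\mapsto\sum_{i=1}^{M}\Phi_0^{i-1}Y\Phi_0^{M-i}$ on $\mathrm{Mat}_m(\bar A_x)$, which is checked via Nakayama over the residue field $k$.

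The one genuine difference is that the paper first invokes Lemma~\ref{first lemma}(2) to reduce (by twisted conjugation $H\mapsto (I+tM)^{-1}H\,\act{\beta}{(I+tM)}$) to the case where $\Phi$ is \emph{simple}; then the operator $S$ has a single residue eigenvalue $\beta$ and bijectivity over $k$ is immediate from the scalar $M\beta^{M-1}$. You instead skip that reduction and analyse $S$ directly in block form, handling the off-diagonal blocks via the geometric-sum identity $(\beta_i^M-\beta_j^M)/(\beta_i-\beta_j)$ together with your ``consistency'' condition on the chosen roots (so that $\beta_i\neq\beta_j$ forces $\alpha_i\neq\alpha_j$). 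Both routes work; the paper's buys a shorter endgame at the cost of quoting the earlier twisted-conjugation lemma, while yours is self-contained at the cost of the extra Vandermonde/consistency bookkeeping, and also makes transparent why in the ``Moreover'' clause the canonical form of $\Phi_2$ (distinct $\alpha_i$, hence nonzero off-diagonal scalars) is exactly what forces uniqueness.
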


\begin{proof}
    We proceed by induction on $\alpha$ (see the proof to the lemma \ref{first lemma}). By lemma \ref{first lemma}, we may assume $\Phi$ is of the simple form.

    First, assume $\alpha=1$, which implies $t=0$ and $\exp\partial_\beta=id$. Since $M$ is invertible in $A_x$ and $A_x$ is strictly henselian, there exists $\lambda\in A_x$ such that $\lambda^{-M}\Phi$ is strictly simple. Thus we can choose $$\Phi_1=\lambda\left(\lambda^{-M}\Phi\right)^{\tfrac{1}{M}}.$$  The second claim is trivial in this case, since when $\alpha=1$, two elements in $A_x$ that are equal modulo $t$ must be equal.

    Suppose that we have already proved the case for $\alpha-1$. By induction hypothesis, we can find a $\Phi_2\in \mathrm{GL}_m(A_x)$ which is of the simple form such that
    $$\Phi_2\cdot\act{\beta}{\Phi_2}\cdot\ \dots\ \cdot\act{(M-1)\beta}{\Phi_2}\equiv\Phi\mod t^{\alpha-1}.$$
    We prove that there is a unique $Y\in Mat_m(A_x/t)$ such that $\Phi_1:=\Phi_2+t^{\alpha-1}Y$ satisfies
    $$\Phi_1\cdot\act{\beta}{\Phi_1}\cdot\ \dots\ \cdot\act{(M-1)\beta}{\Phi_1}=\Phi.$$
    In fact (note that $t^{\alpha}=0$),
    \begin{align*}&\ \ \ \ (\Phi_2+t^{a-1}Y)(\act{\beta}{\Phi_2}+t^{\alpha-1}\act{\beta}{Y})\dots (\act{(M-1)\beta}{\Phi_2}+t^{\alpha-1}\act{(M-1)\beta}{Y})\\&=(\Phi_2+t^{\alpha-1}Y)(\act{\beta}{\Phi_2}+t^{\alpha-1}Y)\dots (\act{(M-1)\beta}{\Phi_2}+t^{\alpha-1}Y)\\ &=\Phi_2\cdot\act{\beta}{\Phi_2}\cdot\ \dots\ \cdot\act{(M-1)\beta}{\Phi_2}+t^{\alpha-1}\sum_{u=1}^M\Phi_2\act{\beta}{\Phi_2}\dots\act{(u-1)\beta}{\Phi_2}Y\act{(u+1)\beta}{\Phi_2}\dots \act{(M-1)\beta}{\Phi_2}\\
    &=\Phi_2\cdot\act{\beta}{\Phi_2}\cdot\ \dots\ \cdot\act{(M-1)\beta}{\Phi_2}+t^{\alpha-1}\sum_{u=1}^M\Phi_2^{u-1}Y\Phi_2^{M-u}\end{align*}

    Thus we only need to prove that for any simple $B\in \mathrm{GL}_m(A_x)$ the homomorphism $$Y\mapsto\sum_{i=1}^M B^{i-1}YB^{M-i}$$ from the space of $m\times m$ matrices to itself is bijective. By Nakayama's lemma, we may assume that $A_x$ is a field (of characteristic $0$). The field case follows from the next lemma.
\end{proof}

\begin{lemma}
    Let $K$ be a field of characteristic $0$, $B$ be an invertible $n\times n$ upper triangular matrix with constant diagonal and $M\geq 1$ an integer. Then the linear endomorphism on the space of rank $n$ matrix
    $$F:Y\mapsto\sum_{i=1}^M B^{i-1}YB^{M-i}$$
    is an automorphism.
\end{lemma}

\begin{proof}
    It suffices to prove for an unipotent $B$. We can prove by induction on $n$. The case for $n=1$ is trivial. Assume we have proved for $n-1$. Let $$X=\begin{pmatrix}x_{11}&x_{12}&\cdots&x_{1n}\\ x_{21} &x_{22}&\cdots &x_{2n}\\ \vdots &\vdots &&\vdots\\ x_{n1}&x_{n2}&\cdots&x_{nn} \end{pmatrix}$$ such that $F(X)=0$. Look at the first column of $F(X)$, it is equal to
    $$\sum_{i=1}^M B^{i-1}\begin{pmatrix}x_{11}\\ x_{21}\\ \vdots\\ x_{n1}\end{pmatrix}$$ by a direct calculation. Thus $F(X)=0$ implies the first column of $X$ is zero. Now we can use the induction hypothesis for the matrix $$X'=\begin{pmatrix}x_{22}&x_{23}&\cdots&x_{2n}\\ x_{32} &x_{33}&\cdots &x_{3n}\\ \vdots &\vdots &&\vdots\\ x_{n2}&x_{n3}&\cdots&x_{nn} \end{pmatrix}.$$ Thus every element of $X$ except the first row is zero. Finally, another direct calculation implies $X=0$. Now we prove that $F$ is injection, hence it is a surjection since $F$ is an endomorphism.
\end{proof}

Now we are ready to prove the main theorem.

\begin{proof}[Proof of Theorem \ref{surject coh}]
    By the Lemma \ref{biaozhunhua}, we may assume $\psi_i$ is simple. For each $i$, choose $\lambda_i\in A_x^\times$ such that $\lambda_i^{-M}\psi_i$ is strictly simple.

    First suppose $\alpha=1$ and thus each $\exp\partial_{\gamma_i}=id$. We can choose $$\psi_i'=\lambda_i\left(\lambda_i^{-M}\psi_i\right)^{\tfrac{1}{M}}.$$ They commute to each other since $(\lambda_i^{-M}\psi_i)^{\tfrac{1}{M}}$ is a power series of $(\lambda_i^{-M}\psi_i-1)$.

    Come to general case.

    By the Lemma \ref{kaifang} and the case for $\alpha=1$, we can choose for each $i$ a simple $\psi_i'$ such that:
    $$
        \left\{\begin{matrix}\psi_i'\psi_j'\equiv \psi_j'\psi_i'\mod t \ \forall i,j\\
       \psi_i'\cdot\act{\gamma_i}{\psi_i'}\cdot\ \dots\ \cdot \act{(M-1)\gamma_i}{\psi_i'}=\psi_i\ \forall i\end{matrix}\right. 
$$

    Now it remains to prove that $\psi_i'\act{\gamma_i}{\psi_j'}=\psi_j'\act{\gamma_j}{\psi_i'}$. We prove it step by step. First we prove that \begin{equation}\label{eq:0001}\psi_i'\act{\gamma_i}{\psi_j}=\psi_j\act{M\gamma_j}{\psi_i'}.\end{equation}
    Note that both sides are congruent modulo $t$.
    By the uniqueness part of the Lemma \ref{kaifang} and
    \begin{align*}
        &\big(\act{M\gamma_j}{\psi_i'}\big)\act{\gamma_i}{\big(\act{M\gamma_j}{\psi_i'}\big)}\dots \act{(M-1)\gamma_i}{\big(\act{M\gamma_j}{\psi_i'}\big)}\\
        &=\act{M\gamma_j}{\psi_i'}\act{\gamma_i+M\gamma_j}{\psi_i'}\dots \act{(M-1)\gamma_i+M\gamma_j}{\psi_i'}\\
        &=\act{M\gamma_j}{\big({\psi_i'}\act{\gamma_i}{\psi_i'}\dots \act{(M-1)\gamma_i}{\psi_i'}\big)}\\
        &=\act{M\gamma_j}{\psi_i},
    \end{align*}
    it suffices to prove
    $$\big(\psi_j^{-1}\psi_i'\act{\gamma_i}{\psi_j}\big)\cdot\act{\gamma_i}{\big(\psi_j^{-1}\psi_i'\act{\gamma_i}{\psi_j}\big)}\dots \act{(M-1)\gamma_i}{\big(\psi_j^{-1}\psi_i'\act{\gamma_i}{\psi_j}\big)}=\act{M\gamma_j}{\psi_i}.$$
    In fact,
    \begin{align*}
        LHS&=\big(\psi_j^{-1}\psi_i'\act{\gamma_i}{\psi_j}\big)\cdot\big(\act{\gamma_i}{\psi_j}^{-1}\act{\gamma_i}{\psi_i'}\act{2\gamma_i}{\psi_j}\big)\dots \big(\act{(M-1)\gamma_i}{\psi_j}^{-1}\act{(M-1)\gamma_i}{\psi_i'}\act{M\gamma_i}{\psi_j}\big)\\
        &=\psi_j^{-1}\big(\psi_i'\cdot\act{\gamma_i}{\psi_i'}\cdot\ \dots\ \cdot \act{(M-1)\gamma_i}{\psi_i'}\big)\act{M\gamma_i}{\psi_j}\\
        &=\psi_j^{-1}\psi_i\act{M\gamma_i}{\psi_j}=RHS.
    \end{align*}
    The last equality is the cocycle condition of $\psi_i$.

    Next we prove that
    $$\psi_i'\act{\gamma_i}{\psi_j'}=\psi_j'\act{\gamma_j}{\psi_i'}\Leftrightarrow \psi_j'^{-1}\psi_i'\act{\gamma_i}{\psi_j'}=\act{\gamma_j}{\psi_i'}$$
    and this time it suffices to prove
    $$\big(\psi_j'^{-1}\psi_i'\act{\gamma_i}{\psi_j'}\big)\cdot \act{\gamma_i}{\big(\psi_j'^{-1}\psi_i'\act{\gamma_i}{\psi_j'}\big)}\dots \act{(M-1)\gamma_i}{\big(\psi_j'^{-1}\psi_i'\act{\gamma_i}{\psi_j'}\big)}=\act{\gamma_j}{\psi_i}$$
    by the uniqueness part of the Lemma \ref{kaifang}. Now do the same calculation as above.
    \begin{align*}
        LHS=\psi_j'^{-1}\psi_i\act{M\gamma_i}{\psi_j'}=\act{\gamma_j}{\psi_i}
    \end{align*}
    where the last equality holds by (\ref{eq:0001}).
\end{proof}
\subsection{Local construction}

Let $f:\T\to A$ be a $\Bp{\alpha}$ algebra with a toric chart. Denote by $X=\mathrm{Spa}(A,A^+)$ and $\overline{X}=\mathrm{Spa}(\overline{A},\overline{A}^+)$. In this section, we will construct a map $$\mathfrak{LRH}_f:\mathrm{H}^1\big(\overline{X}_{\pet},\mathrm{GL}_r(\mathbb{B}_{\alpha})\big)\to \mathrm{MIC}_r(X)\{-1\}.$$ By definition, the left hand side is the set of isomorphic classes of $v$-vector bundles on the ringed space $(\overline{X}_{\pet},\mathbb{B}_{\alpha})$.

Maintain the notations in Subsection \ref{ToricSentheory}. Recall we have a continuous action, which is called the \textbf{natural} action, coming from the functorlaity of $\Bpp{\alpha}$
$$\Gamma\times \widehat{A}_\infty\to \widehat{A}_\infty,\ (\gamma,x)\mapsto \gamma x.$$
Under this action, $A_n$ is equal the set of $p^n\Gamma$-analytic vectors. Take derivation, we get a Lie algebra action 
$$\Gamma[\tfrac{1}{p}]\cong \mathrm{Lie}(\Gamma)\to \mathrm{Der}_{B^+_{\dR,\alpha}}(A_\infty),\ \gamma\mapsto \partial_\gamma.$$

Let ${\Rep}_{r,\Gamma}^{\mathrm{cnt}}(\widehat{A}_{\infty})$ denote the category of rank $r$ projective $\widehat{A}_\infty$-modules with continuous semi-linear $\Gamma$-actions.

\begin{prop}
    There is a canonical equivalence between the category of $\mathbb{B}_{\alpha}$-vector bundles on $\overline X_v$ and the category $\Rep_{r,\Gamma}^{\mathrm{cnt}}(\widehat{A}_{\infty})$.
\end{prop}

\begin{proof}
    This is obvious since a $v$-vector bundle over $\mathrm{Spa}(\widehat{A}_\infty,\widehat{A}^+_\infty)$ is always an analytic bundle (See \cite{kedlaya2019relative} Theorem 3.5.8). The functor from the left to the right is taking global section on $\mathrm{Spa}(\widehat{A}_\infty,\widehat{A}^+_\infty)$.
\end{proof}

We fix a projective module $M$ of rank $r$ over $\widehat{A}_\infty$ with a $\Gamma$ action. Since $A_\infty$ is dense in $\widehat{A}_\infty$, by the general theory of adic spaces there exists an $w\geq 0$ as well as an analytic open covering $f:A_w\to A_w'$ such that $A_w'\widehat{\otimes}_{A_w}M$ is free on $A_w'\widehat{\otimes}_{A_w}\widehat{A}_\infty$. Recall we use $\gamma_*$ to denote the Galois action on $A_\infty$.

\begin{lemma}
    There exists an analytic covering $f_1:A_w\to A_w''$ such that for any $\gamma\in \Gamma/p^w\Gamma$, the homomorphism $f_1\circ\gamma_*:A_w\to A_{w}''$ factors through $f$.
\end{lemma}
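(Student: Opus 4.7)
The plan is to take $A_w''$ to be a common refinement of the finitely many Galois translates of $f$, exploiting that $\Gamma/p^w\Gamma \cong (\mathbb{Z}/p^w\mathbb{Z})^d$ is a finite group of order $p^{wd}$. Such a common refinement always exists within the class of analytic coverings, and by construction each translate of $f$ is dominated by it.

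Concretely, for each $\gamma \in \Gamma/p^w\Gamma$, the automorphism $\gamma_* : A_w \to A_w$ equips $A_w'$ with a twisted $A_w$-algebra structure via the composite $f \circ \gamma_*^{-1}$; denote the resulting $A_w$-algebra by $(A_w')^\gamma$. Each $(A_w')^\gamma$ is again an analytic covering of $A_w$, because precomposing the covering $f$ with an automorphism of $A_w$ is still a covering. I would then set
$$A_w'' \;:=\; \hat{\bigotimes}_{\gamma \in \Gamma/p^w\Gamma}\, (A_w')^\gamma,$$
the complete tensor product over $A_w$ of these finitely many copies, and let $f_1 : A_w \to A_w''$ denote the resulting structural morphism. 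Since a finite complete tensor product of analytic coverings is again an analytic covering (on the level of adic spaces it amounts to intersecting finitely many rational open subsets, and this is stable in Huber's framework), $f_1$ is itself an analytic covering of $A_w$.

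To check the factorization property, I would let $\iota_\beta : A_w' \to A_w''$ denote the canonical insertion into the $\beta$-th tensor factor. Because that factor carries the $A_w$-algebra structure $f \circ \beta_*^{-1}$, the universal property of the tensor product yields
$$f_1(a) \;=\; \iota_\beta\bigl(f(\beta_*^{-1}(a))\bigr) \qquad \text{for all } a \in A_w,$$
which after precomposition with $\beta_*$ gives $f_1 \circ \beta_* = \iota_\beta \circ f$. Thus $f_1 \circ \beta_*$ factors through $f$ via $\iota_\beta$, exactly as required.

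The argument really has no substantive obstacle; the only point that calls for a little care is pinning down the precise interpretation of ``analytic covering $f : A_w \to A_w'$'' (typically $A_w'$ is a finite product of rational localizations of $A_w$) and verifying the corresponding stability under finite fiber products. These facts are standard, and the decisive structural input is just the finiteness of $\Gamma/p^w\Gamma$, which is why taking a simultaneous refinement is possible at all.
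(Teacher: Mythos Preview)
Your proposal is correct and coincides with the paper's own argument: the paper also observes that $f_1\circ\gamma_*$ factoring through $f$ is equivalent to $f_1$ factoring through $f\circ\gamma_*^{-1}$, and then takes $A_w''$ to be the tensor product over $A_w$ of all the twisted copies of $A_w'$. You have simply written out the verification of the factorization and the stability of analytic coverings in more detail than the paper does.
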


\begin{proof}
    Note that the condition that $f_1\circ\gamma_*$ factors through $f$ is equivalent to $f_1$ factoring through $f\circ\gamma_*^{-1}$. Then we can choose
    $$A_w''=\bigotimes_{\gamma\in \Gamma/p^w\Gamma}\gamma_*A_w'$$
    where $\gamma_*A_w'$ is the $A_w$-algebra such that the underlying ring is $A_w'$ but is viewed as an $A_w$-algebra via
    $$f\circ\gamma_*^{-1}:A_w\to A_w'.$$
    For any $\gamma\in\Gamma/p^w\Gamma$, let $g:A_w'\to A_w''$ be the map
    $$x\mapsto 1\otimes1\otimes\dots\otimes x\otimes 1\dots\otimes 1$$ where the right-hand-side is the element such that there is an $x$ at the factor $\gamma_*A_w'$ and $1$ at the other factors. By definition, we have $g\circ f\circ\gamma_*^{-1}:A_w=f_1$.
\end{proof}

\begin{rmk}
    The same argument shows that for any \'etale covering $f:A_w\to A_w'$, there exists an \'etale covering $f_1:A_w\to A_w''$ such that for any $\gamma\in \Gamma/p^w\Gamma$, the homomorphism $f_1\circ\gamma_*$ factors through $f$.
\end{rmk}

Choose such an $f_1:A_w\to A_w''$, we have
$$A_w''\otimes_AA_w=A_w''\otimes_{A_w}(A_w\otimes_AA_w)=A_w''\otimes_{A_w}\prod_{\Gamma/p^w}A_w=\prod_{\Gamma/p^w}A_w''.$$
The above lemma implies that the natural map $A_w\xrightarrow{x\mapsto 1\otimes x}A_w''\otimes_AA_w$ factors through $f$. Since
$$A_w''\widehat{\otimes}_A\widehat{A}_\infty=(A_w''\otimes_AA_w)\widehat{\otimes}_{A_w}\widehat{A}_\infty$$
$A_w''\widehat{\otimes}_AM$ is free over $A_w''\widehat{\otimes}\widehat{A}_\infty$.

In conclusion, we have proved that:

\begin{lemma}\label{refine}
    There exists an \'etale covering $A\to A'$ such that $M_{A'}:=A'\widehat{\otimes}_AM$ is free over $A'\otimes_A\widehat{A}_\infty$.
\end{lemma}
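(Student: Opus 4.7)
The plan is to combine the density of $A_\infty$ in $\hat{A}_\infty$ with the Galois structure of the cover $A_w/A$, so that an analytic trivialization of $M$ over some finite level can be promoted to a genuine \'etale trivialization over an \'etale cover of $A$. Essentially all the work has already been set up in the paragraphs above the lemma; the proof amounts to assembling three steps.

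First, I would record the analytic trivialization: since $M$ is a projective $\hat{A}_\infty$-module of rank $r$ and $A_\infty=\varinjlim_w A_w$ is dense in $\hat{A}_\infty$, the general theory of adic spaces supplies some integer $w\geq 0$ and an analytic (hence \'etale) covering $f:A_w\to A_w'$ such that $A_w'\hat{\otimes}_{A_w}M$ is free over $A_w'\hat{\otimes}_{A_w}\hat{A}_\infty$.

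Second, I would apply the refinement lemma established just before the statement: there exists an \'etale cover $f_1:A_w\to A_w''$ such that for every $\gamma\in\Gamma/p^w\Gamma$ the composite $f_1\circ\gamma_*$ factors through $f$. Concretely one takes $A_w''$ to be the complete tensor product, over $A_w$, of all the maps $f\circ\gamma_*^{-1}:A_w\to A_w'$, indexed by $\gamma\in\Gamma/p^w\Gamma$. Since $A\to A_w$ is finite \'etale and $A_w\to A_w''$ is \'etale, the composite $A\to A':=A_w''$ is an \'etale cover.

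Third, I would verify freeness of $A'\hat{\otimes}_AM$ over $A'\hat{\otimes}_A\hat{A}_\infty$. The Galois identification $A_w\otimes_A A_w\cong\prod_{\Gamma/p^w\Gamma}A_w$ gives
$$A_w''\otimes_A A_w \;=\; A_w''\otimes_{A_w}\bigl(A_w\otimes_A A_w\bigr)\;\cong\;\prod_{\Gamma/p^w\Gamma} A_w'',$$
and the refinement property says that in each factor the structural map $A_w\to A_w''$ factors through $f$. Tensoring with $\hat{A}_\infty$ over $A_w$ gives $A'\hat{\otimes}_A\hat{A}_\infty=(A_w''\otimes_A A_w)\hat{\otimes}_{A_w}\hat{A}_\infty$ and a corresponding factor decomposition, inside each of which the base change of $M$ along $f$ is free; gathering these concludes the argument.

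The main difficulty is really concentrated in Step 2, which is the preceding lemma: ensuring that the \'etale refinement $A_w''$ actually arranges the Galois conjugates $f_1\circ\gamma_*$ to factor through the originally given cover $f$. Once that combinatorial/Galois-theoretic refinement is in hand, Step 3 is a formal manipulation of complete tensor products using the descent identity $A_w\otimes_A A_w\cong \prod_{\Gamma/p^w\Gamma}A_w$; no further analytic input is needed.
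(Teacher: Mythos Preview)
Your proposal is correct and follows essentially the same route as the paper: the paper's ``proof'' is precisely the discussion preceding the lemma, which finds an analytic trivialization over some $A_w'$, refines via the Galois-factorization lemma to $A_w''$, and then uses the decomposition $A_w''\otimes_A A_w\cong\prod_{\Gamma/p^w\Gamma}A_w''$ to conclude freeness of $A_w''\hat{\otimes}_A M$. Your three steps match this exactly, with $A'=A_w''$.
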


\begin{rmk}
    The same calculation as well as the previous remark implies that: for any \'etale covering $f:A_w\to A_w'$, there exists an \'etale covering $A\to A'$ such that $A_w\xrightarrow{x\mapsto 1\otimes x}A_w\otimes_AA'$ factors through $f$.
\end{rmk}

By definition, $A'\widehat{\otimes}_A\widehat{A}_\infty\cong \widehat{A}_\infty'$ and thus by Sen theory (cf. Theorem \ref{Sen}):

\begin{theo}\label{theo:toric decomp}
    There exists an $n$ such that for any $n'\geq n$, the set of $p^{n'}\Gamma$-analytic vectors of $M$, which is denoted by $M_{A',n'}$ is a rank $r$ finite free module over $A_{n'}'$. Moreover, $$M_{A',n'}\otimes_{A'_{n'}}\widehat{A}_\infty'=M_{A'}.$$
\end{theo}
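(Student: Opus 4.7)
The plan is to apply the decompletion machinery of \cite{camargo2023geometric} to the strongly decomposable Sen theory $(\hat{A}_\infty', \Gamma)$ inherited from $(\hat{A}_\infty, \Gamma)$, specialised to the finite free semilinear representation $M_{A'}$ produced by Lemma \ref{refine}.

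First, I would verify that $(\hat{A}_\infty', \Gamma)$ inherits a strongly decomposable Sen theory structure via the étale base change $A \to A'$. Since $A_{n'} \otimes_A A' = A'_{n'}$ by construction, and Corollary \ref{dsum} gives the decomposition $\hat{A}_\infty \cong A_{n'} \oplus \X_{n'}(A)$ as topological $A_{n'}$-modules, tensoring with $A'$ produces an analogous decomposition $\hat{A}_\infty' \cong A'_{n'} \oplus \X_{n'}(A')$. The analyticity bounds and compatibility with the normalised trace transfer directly from $\hat{A}_\infty$, so the Sen theory hypotheses carry over to $\hat{A}_\infty'$ with Sen subrings $A'_{n'}$.

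Second, I would invoke the decompletion theorem of Camargo: for any finite projective semilinear $\Gamma$-representation $N$ over a strongly decomposable Sen theory, there exists $n$ such that for every $n' \geq n$, the submodule $N^{p^{n'}\Gamma\text{-an}}$ of $p^{n'}\Gamma$-analytic vectors is finite projective over the Sen subring, has the same rank as $N$, and the natural map $N^{p^{n'}\Gamma\text{-an}} \otimes \hat{A}_\infty' \to N$ is an isomorphism. Applied to $N = M_{A'}$, this yields everything except freeness.

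Third, to upgrade "finite projective" to "finite free" of rank $r$, I would pick a free basis of $M_{A'}$ over $\hat{A}_\infty'$ and consider the resulting $1$-cocycle $\Phi: \Gamma \to \mathrm{GL}_r(\hat{A}_\infty')$ encoding the semilinear action. Camargo's theorem, at the level of cocycles, yields a matrix $U \in \mathrm{GL}_r(\hat{A}_\infty')$ such that $U^{-1} \Phi_\gamma\, \gamma(U) \in \mathrm{GL}_r(A'_{n'})$ for every $\gamma \in p^{n'}\Gamma$; the columns of $U$ then form a $p^{n'}\Gamma$-analytic basis of $M_{A'}$, exhibiting $M_{A',n'}$ as a free $A'_{n'}$-module of rank $r$ with base change recovering $M_{A'}$. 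The main obstacle is the bookkeeping of hypotheses for Camargo's theorem: one needs to pin down the precise continuity of the $\Gamma$-action on $\hat{A}_\infty'$ coming from $\Bpp{\alpha}(-)$ at the perfectoid tilde limit, and to confirm that strong decomposability survives étale base change. If the cited form of the theorem only delivers finite projectivity, a further étale refinement of $A'$ — allowed by the remark after Lemma \ref{refine} — can be used to trivialise the resulting projective $A'_{n'}$-module.
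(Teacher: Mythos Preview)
Your proposal is correct and matches the paper's approach: the paper gives no explicit proof here, simply writing ``and thus by Sen theory'' after noting $A'\hat{\otimes}_A\hat{A}_\infty\cong \hat{A}_\infty'$, which is shorthand for precisely the invocation of Camargo's decompletion machinery (via the strongly decomposable Sen theory of \S2.6) that you spell out. Your extra care about transferring the Sen-theory structure along the \'etale base change and upgrading projectivity to freeness via the cocycle presentation (using that $M_{A'}$ is already free by Lemma~\ref{refine}) fills in exactly the details the paper leaves implicit.
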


\begin{defi}
    For any $\gamma\in \Gamma$ and $m\in M_{A',n}$, let
    $$\nabla_{\gamma}(m):=\lim_{u\in\mathbb{Z}_p\to 0}\tfrac{(u\cdot\gamma)m-m}{u}$$ where the product is scalar product. By an elementary calculation, $$\nabla_{\gamma}(am)=\partial_{\gamma}(a)+a\nabla_{\gamma}(m).$$
\end{defi}

By the isomorphism $\Omega_{\mathrm{Spa}(A',A'^+)}^\vee\{1\}\cong A'\otimes_{\mathbb{Z}_p}\Gamma$ such that $\gamma_i$ maps to $tT_i\partial_i$, we can extend the above action to an integrable connection
$$\nabla:A'_{n}\otimes_{A'}\Omega^\vee_{\mathrm{Spa}(A',A'^+)}\{1\}\to \End_{B_{\alpha}^+}(M_{A',n}).$$
This is a connection since $\partial_{\gamma_i}=t T_i\partial_i$ on $A_n'$.

We fix an isomorphism of $A'_{n}$-modules ${A'_n}^r\cong M$. Let $e_1,e_2,\dots, e_r$ be the canonical basis of ${A'_n}^r$ and $\Gamma$ act on ${A'_n}^r$ via the isomorphism. Suppose
$$[\gamma e_1,\gamma e_2, \dots, \gamma e_r]=[e_1,e_2,\dots,e_r]\phi_{\gamma}.$$
By an elementary calculation,
$$\nabla_{\gamma}:[e_1,e_2,\dots,e_r]\mapsto [e_1,e_2,\dots,e_r]\phi_{\gamma}$$
where $\phi_{\gamma}=\lim_{u\in\mathbb{Z}_p\to 0}\tfrac{\Phi_{u\cdot\gamma}-I_r}{u}$.

We view the above connection as an element in $\mathrm{MIC}_r(A'_n)\{-1\}$. In the following, we will use $\nabla$ to denote this connection but viewed as an element in $\mathrm{MIC}_r(A'_n)\{-1\}$.

\begin{theo}
    The element $\nabla\in \mathrm{MIC}_r(A_n')\{-1\}$ descends to an element in $\mathrm{MIC}_r(A')\{-1\}$.
\end{theo}

\begin{proof}
    It is enough to prove that for each $\delta\in\Gamma$, there exists an \'etale covering $A'\to B$ such that the connection $\nabla_{B}$ (base change to $A'_n\otimes_A'B$) is isomorphic to $\delta_*(\nabla_{B})$. The latter one is the connection such that (via the isomorphism $\Omega_{\mathrm{Spa}(A',A'^+)}^\vee\{1\}\cong A'\otimes_{\mathbb{Z}_p}\Gamma$) $$\delta_*(\nabla_{B,\gamma_i}):[e_1,e_2,\dots,e_r]\mapsto [e_1,e_2,\dots,e_r]\cdot(\delta_*\otimes_{A'} id_{B})(\phi_{\gamma_i}).$$

    This is equivalent to showing that there exists an \'etale covering $A'\to B$ and a matrix $V\in \mathrm{GL}_r(A'_n\otimes_{A'}B)$ such that $$\delta_*(\phi_{\gamma_i})=V^{-1}\phi_{\gamma_i}V+V^{-1}\partial_{\gamma_i}(V)$$ for any $i=1,2,\dots,d$.

    Observe that the operator $\exp(\partial_\gamma)$ reproduces the action of $\gamma$ on $A'_n$ for any $\gamma\in p^{n}\Gamma$. By the subsection $3.2$ and the remark after Lemma \ref{refine}, we know that there exists an \'etale covering $A'\to B$ such that $\Phi|_{p^n\Gamma}$ can be extended to a cocycle (of the action $\exp(\partial_{-})$ on $A'_n\otimes_{A'}B$) $\Phi':\Gamma\to \mathrm{GL}_r(A'_n\otimes_A'B)$.

    By the cocycle conditions, $$\Phi_{\delta}\delta(\Phi_{\gamma})=\Phi_{\delta+\gamma}=\Phi_{\gamma}\gamma(\Phi_{\delta}).$$ Take (direction $\gamma$) derivation,
    $$\Phi_{\delta}\delta(\phi_{\gamma})=\phi_{\gamma}\Phi_{\delta}+\partial_{\gamma}(\Phi_{\delta}).$$ Act $\exp(-\partial_{\delta})$ on both side,
    $$\delta_*(\phi_{\gamma})=\exp(-\partial_{\delta})(\Phi_{\delta}^{-1})\cdot\exp(-\partial_{\delta})(\phi_{\gamma})\cdot\exp(-\partial_{\delta})(\Phi_{\delta})+\exp(-\partial_{\delta})(\Phi_{\delta})^{-1}\cdot\partial_{\gamma}\left(\exp(-\partial_{\delta})(\Phi_{\delta})\right)$$
    Similarly,
    $$\Phi'(-\delta)\exp(-\partial_{\delta})\left(\phi_{\gamma}\right)=\phi_{\gamma}\Phi'(-\delta)+\partial_{\gamma}\left(\Phi'(-\delta)\right)$$

    Take
    $$V=\Phi'(-\delta)\cdot\exp(-\partial_{\delta})(\Phi_{\delta}),$$ the above calculation implies $\delta_*(\Phi_{i})=V^{-1}\phi_{i}V+V^{-1}\partial_{\gamma_i}(V).$
\end{proof}

\begin{prop}
    The element $\nabla$ descends an element in $\mathrm{MIC}_r(A)\{-1\}$. Moreover, this element does not depend on the choice of $A\to A'$.
\end{prop}

\begin{proof}
    The above construction is clearly functorial for $A\to A'$. That is, for any diagram
    $$\xymatrix{
        A_1'\ar[rr]^-{g} &&A_2'\\
        &A\ar[ul]^-{f_1}\ar[ur]_-{f_2}
    }$$
    the diagram
    $$\xymatrix{
        \mathrm{H}^1\big(\Gamma,\mathrm{GL}_r(\widehat{A}_{1,\infty})\big)\ar[d]\ar[rr] &&\mathrm{H}^1\big(\Gamma,\mathrm{GL}_r(\widehat{A}_{2,\infty})\big)\ar[d]\\
        \mathrm{MIC}_{r}\big(\Spa(A_1',A_1'^{+})\big)\{-1\}\ar[rr]&&\mathrm{MIC}_{r}\big(\Spa(A_2',A_2'^{+})\big)\{-1\}
    }$$
    commutes. Thus, by the sheaf property of $\mathrm{MIC}_r(-)\{-1\}$, the element $\nabla$ lies in $\mathrm{MIC}_r\big(\Spa(A,A^+)\big)\{-1\}$.
\end{proof}

\subsection{Independence of toric charts}

In the previous subsection, we constructed a map for any algebra with a toric chart $f:\T\to A$ 
$$\mathfrak{LRH}_f:\mathrm{H}^1\big(\overline{X}_{\pet},\mathrm{GL}_r(\mathbb{B}_{\alpha})\big)\to \mathrm{MIC}_r(X)\{-1\}.$$
It is necessary to prove that this map is independent of the choice of $f$. The proof is analogous to Heuer's reconstruction of Camargo's Higgs field (\cite{heuer2023padic} Theorem 4.8). Let $X=\mathrm{Spa}(A,A^+)$ and $\overline{X}=\mathrm{Spa}(\overline{A},\overline{A}^+)$ as defined above.

Suppose there are two charts $$\xymatrix{\T_1=\Bp{\alpha}\langle T_1^{\pm1},T_2^{\pm1},\dots,T^{\pm1}_d\rangle\ar[dr]_-{f_1} &&\T_2=\Bp{\alpha}\langle S_1^{\pm1},S_2^{\pm1},\dots,S^{\pm1}_d\rangle\ar[dl]^-{f_2}\\ &A}.$$

We define $$A_{m,n}:=A[T_1^{\frac{1}{p^m}},T_2^{\frac{1}{p^m}},\dots, T_d^{\frac{1}{p^m}},S_1^{\frac{1}{p^n}},S_2^{\frac{1}{p^n}}\dots,S_d^{\frac{1}{p^n}}].$$
Similarly, $$\widehat{\overline{A}}_{\infty,n}=\widehat{\overline{A}}_{\infty,0}\otimes_AA_{0,n}$$ and $$\widehat{\overline{A}}_{m,\infty}=\widehat{\overline{A}}_{0,\infty}\otimes_AA_{m,0}$$ by almost purity, they are perfectoid algebras.
We also define
$$\widehat{A}_{\infty,n}=\Bpp{\alpha}(\widehat{\overline{A}}_{\infty,n});\ \widehat{\overline{A}}_{m,\infty}=\Bpp{\alpha}(\widehat{\overline{A}}_{m,\infty}).$$
We apply the consequences in \ref{toricalg} to $\T_1\to A_{0,n}$ and $\T_2\to A_{m,0}$, $A_{\infty,n}$ is a subalgebra of $\widehat{A}_{\infty,n}$ and $A_{m,\infty}$ is a subalgebra of $\widehat{A}_{m,\infty}$.
Finally, $$\widehat{\overline{A}}_{\infty,\infty};\ \widehat{A}_{\infty,\infty}$$ are the (unique) perfectoid tilde colimit of $\overline{A}_{m,n}$ and its $\Bpp{\alpha}$.

We have the following diagram:
$$\xymatrix{&&\widehat{A}_{\infty,\infty}&&\\&\widehat{A}_{\infty,n}\ar[ur]&&\widehat{A}_{m,\infty}\ar[ul]&
\\
\widehat{A}_{\infty,0}\ar[ur]&&A_{m,n}\ar[ul]\ar[ur]&&\widehat{A}_{0,\infty}\ar[ul]\\&A_{m,0}\ar[ul]\ar[ur]&&A_{0,n}\ar[ul]\ar[ur]&\\&&A\ar[ur]\ar[ul]&&}.$$

Let $\Gamma_1$ and $\Gamma_2$ be two copies of $\Z_p^{d}$ corresponding to two toric charts $f_1$ and $f_2$. Thus, by definition, there exist base change homomorphisms
$$\Rep_{r,\Gamma_1}^{\mathrm{cnt}}(A_{m,0})\to \Rep_{r,\Gamma_1}^{\mathrm{cnt}}(\widehat{A}_{\infty,0})\to \Rep_{r,\Gamma_1\times\Gamma_2}^{\mathrm{cnt}}(\widehat{A}_{\infty,\infty})$$ and similarly for $\Gamma_2$ and $A_{0,n}$.

Let $\gamma_i^{(1)}$ denote the $i$-th canonical basis vector. Define $\gamma_i^{(2)}\in\Gamma_2$ similarly. Recall from Subsection \ref{ToricSentheory}, we have actions
$\gamma_i^{(\epsilon)}$, $\gamma_{i,*}^{(\epsilon)}$ as well as
$\partial_{\gamma_i^{(\epsilon)}}$ for $\epsilon=1,2$, corresponding the algebras with toric charts 
$$\Bp{\alpha}\langle T_1^{\pm1},T_2^{\pm1},\dots,T^{\pm1}_d\rangle\xrightarrow{f_1} A$$ and $$\Bp{\alpha}\langle S_1^{\pm1},S_2^{\pm1},\dots,S^{\pm1}_d\rangle\xrightarrow{f_2} A.$$

\begin{theo}
    Suppose there exist $\Phi_1\in \Rep_{r,\Gamma_1}^{\mathrm{cnt}}(A_{m,0})$ and $\Phi_2\in \Rep_{r,\Gamma_2}^{\mathrm{cnt}}(A_{0,n})$ such that they are equivalent in $\Rep_{r,\Gamma_1\times\Gamma_2}^{\mathrm{cnt}}(\widehat{A}_{\infty,\infty})$, then the connections induced by them are isomorphic after restricting on $A_{m,n}$.
\end{theo}

\begin{proof}
    In the proof, we regard $\Gamma_1$ as the subgroup $\Gamma\times 1\subseteq \Gamma_1\times\Gamma_2$, and similarly for $\Gamma_2$.

    By assumption, there exists a matrix $V\in \mathrm{GL}_r(\widehat{A}_{\infty,\infty})$ such that for any $\delta_1\in \Gamma_1$ and $\delta_2\in \Gamma_2$,
    $$V\Phi_1(\delta_1)\left((\delta_1,\delta_2)(V)\right)^{-1}=\Phi_2(\delta_2).$$
    This implies that 
    $$\Phi_1(\delta_1)=V\delta_1(V)^{-1} \quad \text{and} \quad \Phi_2(\delta_2)=V\delta_2(V)^{-1}$$
    for all $\delta_1\in \Gamma_1$ and $\delta_2\in\Gamma_2$.

    Thus $T\in \mathrm{GL}_r(\widehat{A}_{\infty,\infty}^{\Gamma_1\times \Gamma_2-la})$. By taking derivatives, we obtain for each $\delta_1\in\Gamma_1$ and $\delta_2\in\Gamma_2$
    $$\phi_1(\delta_1)=V\partial_{\delta_1}(V^{-1});\ \phi_2(\delta_2)=V\partial_{\delta_2}(V^{-1}),$$
    where $\phi_1(\delta_1)=\lim_{c\in\Z_p\to 0}\tfrac{\Phi_1(c\gamma_1)-\Phi_1(0)}{c}$, and similarly for $\phi_2$. This equality holds in $\widehat{A}_{\infty,\infty}^{\Gamma_1\times\Gamma_2-la}$. However, since each coefficient lies in $A_{m,n}$, the equality also holds in $A_{m,n}$.

    Suppose \begin{equation}\label{dTdS}[T_1^{-1}dT_1,\dots, T_d^{-1}dT_d]=[S_1^{-1}dS_1,\dots, S_d^{-1}dS_d]U\end{equation} for some $U\in \mathrm{GL}_r(A)$. Thus 
    $$[T_1\partial_{1}^{(1)},\dots, T_d\partial_{d}^{(1)}]=[S_1\partial_{1}^{(2)},\dots, S_d\partial_{d}^{(2)}]U^{-t}.$$ 
    Since $\partial_{\gamma_i^{(1)}}= tT_i\partial_i^{(1)}$ and $\partial_{\gamma_i^{(2)}}=tS_i\partial_i^{(2)}$ (see Subsection \ref{ToricSentheory}), we have
    $$[\partial_{\gamma_1^{(1)}},\dots, \partial_{\gamma_d^{(1)}}]=[\partial_{\gamma_1^{(2)}},\dots, \partial_{\gamma_d^{(2)}}]U^{-t}.$$
    Hence,
    \begin{align}
    [\phi_1(\gamma_1^{(1)}),\phi_1(\gamma_2^{(1)}),\dots,\phi_1(\gamma_d^{(1)})]&=[V\partial_{\gamma_1^{(1)}}(V^{-1}),V\partial_{\gamma_2^{(1)}}(V^{-1}),\dots,V\partial_{\gamma_d^{(1)}}(V^{-1})]\nonumber\\
    &=[V\partial_{\gamma_1^{(2)}}(V^{-1}),V\partial_{\gamma_2^{(2)}}(V^{-1}),\dots,V\partial_{\gamma_d^{(2)}}(V^{-1})]U^{-t}\nonumber\\
    &=[\phi_2(\gamma_1^{(2)}),\phi_2(\gamma_2^{(2)}),\dots,\phi_2(\gamma_d^{(2)})]U^{-t}\label{gamma}
    \end{align}
    
    By our construction, the connection $\nabla_1:A_{m,0}^r\to A_{m,0}^{r}\otimes_A\Omega_{A}\{-1\}$ associated to $\Phi_1$ satisfies that
    $$\nabla_1(e_i)=t^{-1}\sum_{j=1}^d[e_1,e_2,\dots,e_d]\phi_1(\gamma_j^{(1)})\cdot \tfrac{dT_j}{T_j}$$
    and the connection $\nabla_2:A_{0,n}^r\to A_{0,n}^{r}\otimes_A\Omega_{A}\{-1\}$ associated to $\Phi_2$ satisfies that
    $$\nabla_2(e_i)=t^{-1}\sum_{j=1}^d[e_1,e_2,\dots,e_d]\phi_2(\gamma_j^{(2)})\cdot \tfrac{dS_j}{S_j}.$$
    By \eqref{dTdS} and (\ref{gamma}), $\nabla_1$ and $\nabla_2$ are the same as connections on the $A_{m,n}$-module $A_{m,n}^r$.
\end{proof}

\begin{cor}
    The maps $\mathfrak{LRH}_{f_1}$ and $\mathfrak{LRH}_{f_2}$ from $\mathrm{H}^1\big(\overline{X}_{\pet},\mathrm{GL}_r(\mathbb{B}_{\alpha})\big)$ to  $\mathrm{MIC}_r(X)\{-1\}$ coincide.
\end{cor}

\begin{proof}
    The $v$-descent of vector bundles over $\mathrm{Spa}(\widehat{A}_{\infty,\infty},\widehat{A}_{\infty,\infty}^+)$ shows that the left-hand side coincides with the isomorphism classes of finite rank projective modules as well as a semi-linear action of $\Gamma_1\times\Gamma_2$. An argument analogous to that in Lemma \ref{refine} shows that any projective module becomes free after some \'etale base change over $A$ and then reduces to the finite free case. The above theorem implies this claim.
\end{proof}

\subsection{Proof of Theorem \ref{theorem: sheafRH}}

During the above two subsections, we have shown that there is a canonical map $$RH:\mathrm{H}^1\big(\overline{X}_{\pet},\mathrm{GL}_r(\mathbb{B}_{\alpha})\big)\to \mathrm{MIC}_r(X)\{-1\}$$ for any $X$ smooth over $\Bp{\alpha}$. Take sheafification, we get a sheaf morphism $$RH:R^1\nu_*\big(\mathrm{GL}_r(\mathbb{B}_{\alpha})\big)\to \mathrm{MIC}_r(X)\{-1\}.$$ It remains to prove that this is an isomorphism.

By sheaf property, we only need to prove it when $X=\mathrm{Spa}(A,A^+)$ and $A$ has a toric chart $f:\T\to A$.

\begin{theo}\label{theo: injlocalrh}
    For any $\Phi_1,\Phi_2\in \mathrm{H}^1\big(\overline{X}_{\pet},\mathrm{GL}_r(\mathbb{B}_{\alpha})\big)$, such that $$\mathfrak{LRH}_f(\Phi_1)=\mathfrak{LRH}_f(\Phi_2),$$ there exists a \'etale covering $A\to A'$ such that $\Phi_1=\Phi_2$ in $$\mathrm{H}^1\left(\mathrm{Spa}(\overline{A}',\overline{A}'^+)_{\pet},\mathrm{GL}_r(\mathbb{B}_{\alpha})\right).$$
\end{theo}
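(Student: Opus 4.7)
The strategy is to invert the construction of subsection 3.3 étale-locally, by induction on $\alpha$. The base case $\alpha=1$ reduces to the injectivity half of Heuer's theorem recalled in subsection 1.1, so I focus on the inductive step.

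First, applying Lemma \ref{refine} to the modules corresponding to $\Phi_1$ and $\Phi_2$, and using Sen theory to pick a common $n$, I may pass to an étale cover $A\to A'$ over which both representations are free over $\hat{A}'_\infty$ and their $p^n\Gamma$-analytic vectors are free of rank $r$ over $A'_n$. Choosing bases, I represent $\Phi_i$ by matrix cocycles $\Gamma\to\mathrm{GL}_r(\hat{A}'_\infty)$ with $\Phi_i|_{p^n\Gamma}$ valued in $\mathrm{GL}_r(A'_n)$. The equality $LRH_f(\Phi_1)=LRH_f(\Phi_2)$ in the sheafification $t-\mathrm{MIC}_r^{sh}$ yields, after a further étale refinement $A'\to A''$, a matrix $V\in\mathrm{GL}_r(A''_n)$ realizing the gauge equality $\phi_{1,\gamma_j}=V^{-1}\phi_{2,\gamma_j}V+V^{-1}\partial_{\gamma_j}(V)$ for each $j$. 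Using $\gamma=\exp\partial_\gamma$ on $A''_n$ for $\gamma\in p^n\Gamma$ (subsection 2.5), the cocycle $W(\gamma):=V^{-1}\Phi_2(\gamma)\gamma(V)$ (cohomologous to $\Phi_2$) is analytic on $p^n\Gamma$ with derivation at the identity equal to $\phi_{1,\gamma_j}$ in direction $\gamma_j$; since an analytic cocycle $p^n\Gamma\to\mathrm{GL}_r(A''_n)$ is uniquely determined by its first-order behaviour (via the linear ODE $\Phi'(\gamma)=\Phi(\gamma)\cdot\gamma(\phi)$ with initial condition $\Phi(0)=I$), I conclude $W|_{p^n\Gamma}=\Phi_1|_{p^n\Gamma}$. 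Replacing $\Phi_2$ by $W$, I may assume $\Phi_1$ and $\Phi_2$ already agree on $p^n\Gamma$.

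For the final step, I apply the inductive hypothesis to the reductions $\Phi_i\bmod t^{\alpha-1}$: after one more étale cover, those reductions agree as full $\Gamma$-cocycles. Therefore $\Phi_1(\gamma)\Phi_2(\gamma)^{-1}\in 1+t^{\alpha-1}\mathrm{Mat}_r(\hat{A}''_\infty)$ for every $\gamma\in\Gamma$, and on $p^n\Gamma$ this quantity is identically $I$. A coset-by-coset analysis of $\Gamma/p^n\Gamma$ combined with the uniqueness clause of Lemma \ref{kaifang}, applied on each étale stalk $x$ to the simple canonical forms of $\Phi_i(\bar\gamma)$, then forces $\Phi_1(\bar\gamma)=\Phi_2(\bar\gamma)$ after a final étale refinement. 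The main obstacle is this last matching: simultaneously placing two cocycles into compatible canonical forms for each coset representative of $\Gamma/p^n\Gamma$, which requires a careful reprise of the stalkwise bookkeeping of subsection 3.1, but introduces no conceptually new ingredient beyond the uniqueness part of Lemma \ref{kaifang} and the extension remark following Lemma \ref{refine}.
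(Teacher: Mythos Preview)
Your first two paragraphs are essentially the paper's own argument. The paper phrases the key step as an $\exp/\log$ correspondence of semi-linear operators: after enlarging the level $m$ so that $\Phi_{i,\gamma}\in 1+p^2M_r({A'}_m^\circ)$, one has $\nabla_{i,\gamma}=\log\Phi_{i,\gamma}$ as operators, whence a gauge equivalence $V\circ\nabla_{1,\gamma}\circ V^{-1}=\nabla_{2,\gamma}$ exponentiates to $V\circ\Phi_{1,\gamma}\circ V^{-1}=\Phi_{2,\gamma}$ on $p^m\Gamma$. Your ODE-uniqueness formulation is the same statement; just note that you should enlarge $n$ so that the cocycles are close enough to $I$ for the analytic uniqueness to hold, exactly as the paper does.

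The genuine discrepancy is your final paragraph. It is unnecessary, and as written it contains a gap. Once $\Phi_1$ and $W$ agree on $p^n\Gamma$ you are already done: $A''_n$ is \'etale over $A$ (it is finite \'etale over $A''$, which is \'etale over $A$), and the pro-\'etale cohomology $H^1\big(\mathrm{Spa}(\bar{A}''_n)_{\pet},\mathrm{GL}_r(\Bpp{\alpha})\big)$ is computed by the tower $\hat{A}''_\infty/A''_n$ with Galois group $p^n\Gamma$. Agreement of the $p^n\Gamma$-cocycles therefore gives equality of the pro-\'etale classes over the \'etale cover $A''_n$, which is exactly the statement of the theorem. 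The paper stops precisely here.

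Your attempt to go further and match the full $\Gamma$-cocycles fails at the sentence ``after one more \'etale cover, those reductions agree as full $\Gamma$-cocycles.'' The inductive hypothesis is the theorem at level $\alpha-1$: it yields equality of $H^1$-classes after an \'etale cover, i.e.\ the reductions become \emph{cohomologous}, not literally equal. There is then an unaccounted-for intertwiner $\bar U$ modulo $t^{\alpha-1}$, and your conclusion $\Phi_1(\gamma)\Phi_2(\gamma)^{-1}\in 1+t^{\alpha-1}\mathrm{Mat}_r$ does not follow. The subsequent invocation of the uniqueness clause of Lemma~\ref{kaifang} would in any case be aiming at a statement (two $\Gamma$-cocycles agreeing on $p^n\Gamma$ must agree on $\Gamma$) which is false in general and is not what the theorem asserts. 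Simply delete the final paragraph and conclude after your second one, taking $A''_n$ as the required \'etale cover.
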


\begin{proof}
    By Lemma \ref{refine}, we can choose a covering $A\to A'$ such that $\Phi_1,\Phi_2$ comes from generalized representations. Then we choose an integer $m>0$ such that $\Phi_1$ and $\Phi_2$ can be decompleted to $A'_m$ and thus we can assume 
    $$\Phi_1(p^m\Gamma),\Phi_2(p^m\Gamma)\subseteq 1+p^2M_r({A'}_m^+)$$
    by enlarging $m$.

    Let 
    $$\nabla_{i}:\Gamma\to \End_{\Bpp{\alpha}(K,K^+)}({A'}_m^r)\ (i\in\{1,2\})$$ 
    be the connection $\mathfrak{LRH}(\Phi_i)$. By the definition of $\mathfrak{LRH}$, $\nabla_{i,\gamma}=\log(\Phi_{i,\gamma})$ for any $\gamma\in p^m\Gamma$ (here $\Phi_{i,\gamma}$ is considered as a semi-linear action but not just a matrix) for $i=1,2$. By the definition of $\mathrm{MIC}\{-1\}$, we can choose \'etale covering $A_m'\to A''$ such that there exists $V\in \mathrm{GL}_{A''}({A''}^r)$ such that $V\circ\nabla_{1,\gamma}\circ V^{-1}=\nabla_{2,\gamma}$ for any $\gamma\in p^m\Gamma$ and take $\exp$ causes that $\Phi_1$ and $\Phi_2$ are equivalent in $\Rep_r(p^m\Gamma,A'')$. Hence, they are equal in $\mathrm{H}^1\big(\mathrm{Spa}(\overline{A}'',\overline{A}''^+)_{\pet},\mathrm{\mathrm{GL}}_r(\mathbb{B}_{\alpha}^+)\big)$.
\end{proof}

\begin{cor}
    The map $RH$ is an injection.
\end{cor}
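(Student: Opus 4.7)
The plan is to deduce the injectivity at the sheaf level directly from the preceding theorem, which is precisely the local injectivity statement one needs. Since $RH$ is a morphism of \'etale sheaves on $X$, injectivity can be tested on stalks, or equivalently, by showing that two local sections with the same image become equal after a further \'etale refinement. Because smooth adic spaces over $\Bpp{\alpha}(K,K^+)$ admit toric charts \'etale locally (by the proposition on smooth adic spaces in Section 2), I would first reduce to the case $X=\mathrm{Spa}(A,A^+)$ with a fixed toric chart $f:\T\to A$.

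Next, I would unwind the sheafification on the source. The sheaf $R^1\nu_*(\mathrm{GL}_r(\Bpp{\alpha}))$ is by definition the \'etale sheafification of the presheaf $U\mapsto H^1(\bar{U}_\pet,\mathrm{GL}_r(\Bpp{\alpha}))$, so any two germs $\sigma_1,\sigma_2$ over $X$ with $RH(\sigma_1)=RH(\sigma_2)$ admit, after replacing $X$ by some \'etale cover, representatives $\Phi_1,\Phi_2\in H^1(\bar{X}_\pet,\mathrm{GL}_r(\Bpp{\alpha}))$. The corollary of the previous subsection (independence of toric charts) ensures that the local map $LRH_f$ represents $RH$ on sections, so after this reduction we have $LRH_f(\Phi_1)=LRH_f(\Phi_2)$ inside $t\text{-}\mathrm{MIC}_r^{sh}(X)$.

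Finally, I would apply the theorem just proved: there exists an \'etale covering $A\to A'$ such that $\Phi_1=\Phi_2$ in $H^1(\mathrm{Spa}(\bar{A}',\bar{A}'^+)_\pet,\mathrm{GL}_r(\Bpp{\alpha}))$. This means $\sigma_1$ and $\sigma_2$ agree after restriction to an \'etale cover, hence define the same section of the sheafified $R^1\nu_*$. The only subtle point to check carefully is the compatibility of the two sheafification steps, namely that the equality $LRH_f(\Phi_1)=LRH_f(\Phi_2)$ in the $\emph{sheafified}$ target $t\text{-}\mathrm{MIC}_r^{sh}$ may itself require an \'etale refinement before the hypothesis of the previous theorem applies; this is absorbed by passing to a common refinement that simultaneously trivialises the isomorphism of $t$-connections and realises both classes as genuine cocycles. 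I expect this bookkeeping of compatible \'etale refinements to be the main (though routine) obstacle, as the actual rigidity input is entirely carried by the preceding theorem.
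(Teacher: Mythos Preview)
Your proposal is correct and matches the paper's (implicit) approach: the paper states the corollary without proof, treating it as an immediate consequence of the preceding theorem, and your argument is exactly the standard unpacking of why sheaf injectivity follows from that local statement. The bookkeeping of \'etale refinements you flag is indeed routine and is the only content beyond the theorem itself.
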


\begin{proof}
    This is obvious by Theorem \ref{theo: injlocalrh}.
\end{proof}

\begin{theo}
    The map $RH$ is a surjection.
\end{theo}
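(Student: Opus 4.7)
I plan to construct an inverse to $RH$ étale-locally by exponentiating the $t$-connection. Given $(\mathcal{M}, \nabla) \in t\text{-}\mathrm{MIC}_r^{sh}(X)$ with $X = \mathrm{Spa}(A, A^+)$ carrying a toric chart $f : \T \to A$, I first refine along an étale cover so that $\mathcal{M}$ is trivialised as $A^r$ and the connection is framed by matrices $\phi_{\gamma_i} := \nabla_{\gamma_i} \in M_r(A)$ subject to the integrability relations $[\nabla_{\gamma_i}, \nabla_{\gamma_j}] = 0$ on the framed module.

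Next, I choose $N \gg 0$ so that for every $\gamma \in p^N \Gamma$ the operator $\nabla_\gamma = \phi_\gamma + \partial_\gamma$ is topologically nilpotent on $A^r$: the matrix part $\phi_\gamma$ is divisible by $p^N$, while the derivation part $\partial_\gamma$ is divisible by $p^N t$ with $t^\alpha = 0$. I then set $\Phi_\gamma := \exp(\nabla_\gamma)$, viewed as a semilinear endomorphism of $\hat{A}_\infty^r$. The Baker--Campbell--Hausdorff formula turns the integrability of $\nabla$ into the cocycle identity $\Phi_{\gamma + \gamma'} = \Phi_\gamma \cdot \exp(\partial_\gamma)(\Phi_{\gamma'})$ on $p^N\Gamma$, and the characterisation of $A_N$ as the $p^N\Gamma$-analytic vectors of $\hat{A}_\infty$ places the matrix coefficients of $\Phi_\gamma$ in $A_N$. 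I then apply Theorem~\ref{surject coh} to $A_N$ (with its toric chart $\T_N \to A_N$) to extend $\Phi$, after a further étale cover $A_N \to \tilde A_N$, to a cocycle $\tilde\Phi : \Gamma \to \mathrm{GL}_r(\tilde A_N)$. Scalar extension to $\hat{\tilde A}_\infty$ together with the equivalence between pro-étale $\Bpp{\alpha}$-vector bundles on $\bar X$ and projective $\hat{\tilde A}_\infty$-modules with continuous semilinear $\Gamma$-action yields a class $[\tilde\Phi] \in H^1(\mathrm{Spa}(\bar{\tilde A}, \bar{\tilde A}^+)_\pet, \mathrm{GL}_r(\Bpp{\alpha}))$.

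Finally, I verify $LRH_f([\tilde\Phi]) = (\mathcal{M}, \nabla)$ over $\tilde A$. By the construction of $LRH_f$ in the local subsection, the induced connection has $\nabla'_\gamma = \lim_{u \to 0}(\tilde\Phi_{u\gamma} - I)/u$, which for $\gamma \in p^N\Gamma$ returns $\log(\exp(\nabla_\gamma)) = \nabla_\gamma$ on the nose; $\mathbb{Z}_p$-linear continuity propagates the equality to all of $\Gamma$, and sheafification together with the injectivity from the previous corollary finishes the argument. The main obstacle will be carefully disentangling the Galois action $\gamma_*$ from the natural action $\gamma = \gamma_* \circ \exp(\partial_\gamma)$ of Subsection~2.5 throughout the exponentiation and cocycle-extension steps, since it is precisely this twist, rather than strict commutativity of the $\nabla_{\gamma_i}$, that distinguishes the Riemann--Hilbert setting from Heuer's Simpson construction and that the cocycle formula above must track.
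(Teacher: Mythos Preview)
Your core idea---exponentiate the connection to get a cocycle and then check that $LRH_f$ returns the original $\nabla$---is exactly what the paper does. The difference is that you take an unnecessary detour through Theorem~\ref{surject coh}. Once you have the $p^N\Gamma$-cocycle $\Phi_\gamma=\exp(\nabla_\gamma)$ with values in $\mathrm{GL}_r(A)\subset\mathrm{GL}_r(A_N)$, you are already done: the toric tower $\hat A_\infty/A_N$ has Galois group $p^N\Gamma$, so $\Phi$ directly defines a class in $H^1\!\bigl(\mathrm{Spa}(\bar A_N,\bar A_N^+)_{\pet},\mathrm{GL}_r(\Bpp{\alpha})\bigr)$ via the chart $\T_N\to A_N$, and $LRH_{f_N}$ of this class is visibly $\nabla|_{A_N}$. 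Since $A\to A_N$ is \'etale and $RH$ is a morphism of \'etale sheaves, this suffices for surjectivity. This is precisely the paper's three-line argument. Your use of Theorem~\ref{surject coh} to extend $\Phi$ from $p^N\Gamma$ to all of $\Gamma$ is harmless but superfluous; that theorem is needed elsewhere (for the well-definedness of $LRH_f$ under Galois twists), not here. Incidentally, if you do want to invoke it, it should be applied to $A$ with chart $\T\to A$ and $M=p^N$, not to $A_N$ with chart $\T_N\to A_N$: over $A_N$ the relevant Galois group is already $p^N\Gamma$, so there is nothing to extend.
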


\begin{proof}
    For any $\nabla:\Gamma\to \End_{\Bpp{\alpha}(K,K^+)}(A^r)$, we choose a large $m$ such that $\exp\nabla_\gamma$ converges on $p^m\Gamma$ and let $\Phi_{\gamma}$ be its $\exp$. Then $\Phi$ is a cocycle of $p^m\Gamma$ on $\mathrm{\mathrm{GL}}_r(A)\subseteq \mathrm{\mathrm{GL}}_r(A_m)$. By definition, the connection of this cocycle is the base change of $\nabla$ to $A_m$. This proves the surjectivity.
\end{proof}

\section{Moduli spaces}\label{section: moduli}

Throughout this section, we fix a perfectoid field $C$ over $\Q_p$ and a system of $p^n$-th roots $\zeta_{p^n}$. We use $\Bp{\alpha}$ to denote $\Bpp{\alpha}(C,C^+)$. Suppose $X$ is a smooth adic space over $\Bp{\alpha}$ and $\overline{X}$ as above. Let $\Perf$ be the category of affinoid perfectoid spaces over $C$ and consider its $v$-topology. We will sometimes omit the ring of integers when we write an adic spectrum; see Subsection \ref{Subsection: Notations} for the definition of the rings of integers.

Recall the following definitions.

\begin{defi}
    For any integer $r>0$, define the following:
    \begin{enumerate}[(1)]
        \item Let $\M_{\dR,\alpha,r,X}$ be the prestack sending $\mathrm{Spa}(A,A^+)\in \Perf$ to the groupoid of pairs $(M,\nabla)$, where $M$ is a rank $r$ vector bundle on $X_A$ and 
        $$\nabla:M\to M\otimes \Omega\{-1\}$$
        is a connection with respect to $\widetilde{d}$ (see Example \ref{eg:BKdiff}).

        \item Let $\M_{{\mathbb{B}_{\alpha}},r,X}$ be the prestack sending $\mathrm{Spa}(A,A^+)\in \Perf$ to the groupoid of rank $r$ locally free $\Bpp{\alpha}$-sheaves on the $v$-topology of $\overline{X}_{A}$. Here, $\overline{X}=X\otimes_{B_\alpha}C$ and $\overline{X}_A$ is the usual base change.
    \end{enumerate}
\end{defi}

We will prove the following theorem:

\begin{theo}[=Theorem \ref{theorem: modulismall}]
    The prestacks $\M_{\dR,\alpha,r,X}$ and $\M_{{\mathbb{B}_{\alpha}},r,X}$ are small $v$-stacks.
\end{theo}

\begin{rmk}
    The prestack $\M_{{\mathbb{B}_{\alpha}},r,X}$ is by definition a $v$-stack.
\end{rmk}

Before proving Theorem \ref{theorem: modulismall}, we emphasize that by a connection on $X$ we always mean a connection on $X$ with respect to $d:\inte{X}\to \Omega_X\{-1\}$ (cf. Definition \ref{defi: connections}).

\subsection{More on algebras}

We recall some basic facts about perfectoid algebras.

\begin{theo}
    For any diagram of perfectoid Tate rings
    $$\xymatrix{ &A\\ B &D\ar[l]\ar[u]},$$ the complete tensor product $A\widehat{\otimes}_DB$ is a perfectoid Tate ring.
\end{theo}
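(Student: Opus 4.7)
The plan is to build $A \hat\otimes_D B$ explicitly from rings of integral elements and then verify Scholze's perfectoid criterion by reduction modulo a well-chosen pseudo-uniformizer.

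First, I would choose $\vpi \in D^\circ$ with $\vpi^p \mid p$ admitting compatible $p$-power roots $\vpi^{1/p^n}$, which exists since $D$ is perfectoid; the images are pseudo-uniformizers with the same property in $A^\circ$ and $B^\circ$. Set
$$C^\circ := \bigl(A^\circ \otimes_{D^\circ} B^\circ\bigr)^{\wedge}_{\vpi}, \qquad C := C^\circ[\tfrac{1}{\vpi}].$$
By the universal property of $\hat\otimes$ recalled in Subsection 2.1, this $C$ represents $A \hat\otimes_D B$ as a complete Tate ring and $C^\circ$ is a ring of definition.

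To show $C$ is perfectoid, I would verify that Frobenius induces an isomorphism $C^\circ/\vpi^{1/p} \xrightarrow{\sim} C^\circ/\vpi$. Since algebraic tensor products commute with quotients,
$$C^\circ/\vpi^{1/p^n} \;\cong\; (A^\circ/\vpi^{1/p^n}) \otimes_{D^\circ/\vpi^{1/p^n}} (B^\circ/\vpi^{1/p^n})$$
for each $n$. The perfectoid hypothesis supplies Frobenius isomorphisms on each factor after modding out, and tensoring those three isomorphisms together delivers the desired isomorphism for $C^\circ$.

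A conceptually cleaner alternative is to tilt first: identify $C^\flat$ with the $\vpi^\flat$-adic completion of $A^{\flat,\circ} \otimes_{D^{\flat,\circ}} B^{\flat,\circ}$, verify perfectness in characteristic $p$ (where Frobenius on a tensor product of perfect rings over a perfect ring factors as $\phi_R \otimes \phi_S$, a tensor of isomorphisms, hence is itself an isomorphism), and then apply the tilting equivalence to untilt. The main obstacle along either route is the almost-mathematical bookkeeping: the identifications of integral rings and of the mod $\vpi^{1/p^n}$ reductions hold a priori only up to almost isomorphisms, so one must invoke almost purity to promote them to honest isomorphisms at the level of $C^\circ$.
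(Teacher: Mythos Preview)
The paper does not prove this theorem: it is introduced with the phrase ``We recall the following basic fact of perfectoid algebras'' and no argument is given. So there is nothing in the paper to compare your proposal against; you are supplying a proof where the author simply cites the result as known.

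Your outline is the standard one (essentially Scholze's construction of fiber products of perfectoid spaces), and the overall strategy is sound. Two technical points you glide over deserve attention if you want a complete argument. First, your notation $C^\circ$ for the completed tensor of the rings $A^\circ, B^\circ, D^\circ$ is a ring of \emph{definition}, not a priori the ring of power-bounded elements of $C$; the perfectoid condition for a Tate ring is phrased in terms of the latter, and in particular requires uniformity of $C$. This does follow once you know your ring of definition is integral perfectoid (the inclusion into $(A\hat\otimes_D B)^\circ$ is then an almost isomorphism, forcing boundedness), but that step should be named rather than absorbed into ``Scholze's perfectoid criterion.'' Second, the algebraic tensor product $A^\circ \otimes_{D^\circ} B^\circ$ can have $\vpi$-torsion, so $\vpi$ need not be a non-zero-divisor in your $C^\circ$; most definitions of integral perfectoid require this, and the clean fix is to pass to the $\vpi$-torsion-free quotient before completing. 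Your final paragraph about almost-mathematics is gesturing at exactly these issues, so you are aware of them --- they just need to be made explicit.
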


\begin{proof}
    See \cite[Proposition 6.18]{scholze2012perfectoid}.
\end{proof}

Our first goal is to prove that the functor $\Bpp{\alpha}$ commutes with complete tensor products (Theorem \ref{bdrtensor}).

\begin{lemma}\label{witttensor}
    Suppose 
    $$\xymatrix{ &P\\ Q &R\ar[l]\ar[u]}$$ 
    is a diagram of perfect rings of characteristic $p$. Then the canonical map $$W_n(P)\otimes_{W_n(R)} W_n(Q)\to W_n(P\otimes_RQ)$$ is an isomorphism.
\end{lemma}

\begin{proof}
    Recall that for any perfect ring $R$ and a $\Z/p^n$-algebra $T$, the modulo $p$ map
    $$\Hom\left(W_n(R),T\right)\to \Hom(R,T/p^n)$$ is a bijection. The claim follows from this universal property.
\end{proof}

\begin{lemma}\label{wittcompletion}
    Let $R$ be a perfect ring of characteristic $p$ and $r\in R$. Let $\widehat{R}$ be the $r$-adic completion and $i:R\to \widehat{R}$ be the canonical map; we still use $r$ to denote $i(r)$ in $\widehat{R}$. Then for any $m,n\geq 1$, the canonical map $$W_n(R)/[r]^m\to W_n(\widehat{R})/[r]^m$$ is an isomorphism.
\end{lemma}

\begin{proof}
    Since the $r$-adic completion is equivalent to the $r^m$-adic completion, it suffices to consider the case $m=1$.

    The canonical map $R/r\to \widehat{R}/r$ is an isomorphism, so $i^{-1}(r\widehat{R})=rR$. Since each element in $W_n(R)$ can be uniquely written as $$\sum_{j=0}^{n-1}[t_j]p^j,$$ its image in $W_n(\widehat{R})$ is divisible by $r$ if and only if $i(t_j)\in r\widehat{R}$ for each $j$. Hence $t_j\in rR$, and injectivity follows.

    For any $$\sum_{j=0}^{n-1}[s_j]p^j$$ in $W_n(\widehat{R})$, we can choose $t_j\in R$ such that $i(t_j)-s_j\in r\widehat{R}$, and thus the image of $$\sum_{j=0}^{n-1}[t_j]p^j$$ in $W_n(R)/[r]$ maps to $\sum_{j=0}^{n-1}[s_j]p^j$ in $W_n(\widehat{R})/[r]$. Surjectivity follows.
\end{proof}

Now we return to the diagram
$$\xymatrix{ &A\\ B &D\ar[l]\ar[u]}$$ of perfectoid Tate rings.

\begin{theo}\label{bdrtensor}
    Under the above assumption, the canonical homomorphism $$\Bpp{\alpha}(A)\widehat{\otimes}_{\Bpp{\alpha}(D)}\Bpp{\alpha}(B)\to \Bpp{\alpha}(A\widehat{\otimes}_DB)$$ is an isomorphism.
\end{theo}

\begin{proof}
    Fix a set of generators $t$ of $\ker\big(\theta:W(\inte{D}^{+,\flat})\to \inte{D}\big)$, and let $\pi\in\inte{D}^{{+,\flat}}$ be a pseudo-uniformizer. By definition, it suffices to prove that
    $$\left(W(A^{{+,\flat}})/t^\alpha\otimes_{W(D^{{+,\flat}})/t^\alpha}W(B^{{+,\flat}})/t^\alpha\right)^\land\to W(A^{{+,\flat}}\widehat{\otimes}_{D^{{+,\flat}}}B^{{+,\flat}})/t^\alpha$$ is an isomorphism, where the outer completion is $p$-adic and the inner one is $\pi$-adic. Now it suffices to prove for any $N$ that
    $$\left(W(A^{{+,\flat}})/t^\alpha\otimes_{W(D^{{+,\flat}})/t^\alpha}W(B^{{+,\flat}})/t^\alpha\right)/p^N\cong \left(W(A^{{+,\flat}}\widehat{\otimes}_{D^{{+,\flat}}}B^{{+,\flat}})/t^\alpha\right)/p^N.$$

    By Lemma \ref{wittcompletion}, the canonical map $$W_N(A^{{+,\flat}}\widehat{\otimes}_{D^{{+,\flat}}}B^{{+,\flat}})/[\pi]^m\to W_N(A^{{+,\flat}}\otimes_{D^{{+,\flat}}}B^{{+,\flat}})/[\pi]^m$$ is an isomorphism for any $m$, and hence so is $$W_N(A^{{+,\flat}}\widehat{\otimes}_{D^{{+,\flat}}}B^{{+,\flat}})/(t^\alpha,[\pi]^m)\to W_N(A^{{+,\flat}}\otimes_{D^{{+,\flat}}}B^{{+,\flat}})/(t^\alpha,[\pi]^m).$$ Since for sufficiently large $m$, $[\pi^m]\in(p^N,t^\alpha)$, the canonical map $$\left(W(A^{{+,\flat}}\otimes_{D^{{+,\flat}}}B^{{+,\flat}})/t^\alpha\right)/p^N\to \left(W(A^{{+,\flat}}\widehat{\otimes}_{D^{{+,\flat}}}B^{{+,\flat}})/t^\alpha\right)/p^N$$ is an isomorphism.

    By Lemma \ref{witttensor}, $$\left(W(A^{{+,\flat}})/t^\alpha\otimes_{W(D^{{+,\flat}})/t^\alpha}W(B^{{+,\flat}})/t^\alpha\right)/p^N\cong \left(W(A^{{+,\flat}}\otimes_{D^{{+,\flat}}}B^{{+,\flat}})/t^\alpha\right)/p^N.$$ Thus the claim is proved.
\end{proof}

\begin{cor}
    Suppose $(A,A^+)\in \Perf$, $\pi:\mathrm{Spa}(B,B^+)\to \mathrm{Spa}(A,A^+)$ is a $v$-covering, and $f:\T_C\to R$ is a $C$-algebra with a toric chart. Then the homomorphism $$\Bpp{\alpha}(A)\widehat{\otimes}_{\Bp{\alpha}}R\to \Bpp{\alpha}(B)\widehat{\otimes}_{\Bp{\alpha}}R$$ is injective.
\end{cor}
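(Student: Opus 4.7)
The plan is to reduce the question to an analogous injectivity on perfectoid rings, where the $v$-cover property directly yields injectivity, and then import the answer back via the previous theorem. First I would use the toric chart to place $R$ as a direct summand of $\hat{R}_\infty$: Corollary \ref{dsum}, applied to $\T\to R$, decomposes $\hat{R}_\infty \cong R_n\oplus \X_n(R)$ for any $n\geq 1$ in $\mathrm{Mod}^{cadic}_{\Bp{\alpha}}$, and the normalized trace of the Galois cover $R\to R_n$ (which is well-defined because $p$ is invertible in $\Bp{\alpha}$) exhibits $R$ as a direct summand of $R_n$. Combining, $R$ is a topological direct summand of $\hat{R}_\infty$ as a $\Bp{\alpha}$-module. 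Since $\hat{\otimes}_{\Bp{\alpha}}$ is additive on complete adic modules, the square
\[
\xymatrix{
\Bpp{\alpha}(A)\hat{\otimes}_{\Bp{\alpha}}R \ar[r]\ar@{^{(}->}[d] & \Bpp{\alpha}(B)\hat{\otimes}_{\Bp{\alpha}}R \ar@{^{(}->}[d] \\
\Bpp{\alpha}(A)\hat{\otimes}_{\Bp{\alpha}}\hat{R}_\infty \ar[r] & \Bpp{\alpha}(B)\hat{\otimes}_{\Bp{\alpha}}\hat{R}_\infty
}
\]
has both vertical arrows split injective, so it suffices to prove the bottom row is injective.

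Next I would invoke the previous theorem. Since $\bar{R}$ is étale over the toric $C$-algebra $\bar{\T}$, its perfectoid tilde limit $\hat{\bar{R}}_\infty$ is étale as a topological $C$-algebra over $C\langle T_1^{\pm 1/p^\infty},\dots,T_d^{\pm 1/p^\infty}\rangle$ (via almost purity for perfectoids). The hypothesis of the preceding theorem is therefore satisfied with $D=C$ and its ``$B$'' playing the role of $\hat{\bar{R}}_\infty$, yielding
\[
\Bpp{\alpha}(A)\hat{\otimes}_{\Bp{\alpha}}\hat{R}_\infty \cong \Bpp{\alpha}\bigl(A\hat{\otimes}_C\hat{\bar{R}}_\infty\bigr),
\]
and similarly for $B$. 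The problem thus reduces to showing that $\Bpp{\alpha}$ sends the map $A\hat{\otimes}_C\hat{\bar{R}}_\infty\to B\hat{\otimes}_C\hat{\bar{R}}_\infty$ to an injection.

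Finally, since $\mathrm{Spa}(B,B^+)\to\mathrm{Spa}(A,A^+)$ is a $v$-covering of affinoid perfectoid spaces, $v$-descent for the structure sheaf gives an injection $A\hookrightarrow B$; tilting it to an injection $A^\flat\hookrightarrow B^\flat$ of perfect $\mathbb{F}_p$-algebras and tensoring over $C^\flat$ with the perfect algebra $\hat{\bar{R}}_\infty^\flat$ preserves injectivity. Applying $W(-)$ preserves injectivity, so $\mathbb{A}_{\mathrm{inf}}(A\hat{\otimes}_C\hat{\bar{R}}_\infty)\hookrightarrow \mathbb{A}_{\mathrm{inf}}(B\hat{\otimes}_C\hat{\bar{R}}_\infty)$. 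The generator $\xi$ of $\ker\theta$ lies in $\mathbb{A}_{\mathrm{inf}}(C)$ and is a non-zero-divisor on both sides; the short exact sequence $0\to\mathbb{A}_{\mathrm{inf}}(-)\xrightarrow{\xi^\alpha}\mathbb{A}_{\mathrm{inf}}(-)\to\mathbb{A}_{\mathrm{inf}}(-)/\xi^\alpha\to 0$ and the snake lemma then yield injectivity of the quotients, and inverting $p$ preserves it, producing the desired injection on $\Bpp{\alpha}$.

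The main obstacle I anticipate is the clean justification of this last step, namely that $\Bpp{\alpha}$ preserves injections of perfectoid Tate rings. The subtlety is verifying that $\xi^\alpha$ really acts regularly on $\mathbb{A}_{\mathrm{inf}}(A\hat{\otimes}_C\hat{\bar{R}}_\infty)$ and $\mathbb{A}_{\mathrm{inf}}(B\hat{\otimes}_C\hat{\bar{R}}_\infty)$; this is standard but should be explicitly recorded. A secondary point is ensuring that the splitting from step one really commutes with $\hat{\otimes}_{\Bp{\alpha}}$, which follows from the decomposition being a split idempotent in $\mathrm{Mod}^{cadic}_{\Bp{\alpha}}$ together with the additivity of complete tensor product on that category.
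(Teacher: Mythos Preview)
Your reduction via the direct-summand splitting $R\hookrightarrow \hat R_\infty$ and the previous theorem is exactly the paper's approach; the paper's proof is the single line ``This is because $R$ is a direct summand of $\hat R_\infty$,'' leaving the perfectoid step implicit.

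However, your execution of that implicit perfectoid step contains two genuine gaps. First, the claim that tensoring the injection $A^\flat\hookrightarrow B^\flat$ over $C^\flat$ with $\hat{\bar R}_\infty^\flat$ preserves injectivity is not justified: completed tensor products are not exact in general, and $\hat{\bar R}_\infty^\flat$ is not obviously flat over $C^\flat$ in any useful topological sense. Second, even granting an injection $\mathbb A_{\mathrm{inf}}(A\hat\otimes_C\hat{\bar R}_\infty)\hookrightarrow \mathbb A_{\mathrm{inf}}(B\hat\otimes_C\hat{\bar R}_\infty)$, the snake lemma does \emph{not} yield injectivity modulo $\xi^\alpha$: for that you would need the cokernel to be $\xi$-torsion-free, which you have not established.

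Both problems disappear if you argue more directly. The base change of a $v$-cover along any map of affinoid perfectoids is again a $v$-cover, so
\[
\mathrm{Spa}\bigl(B\hat\otimes_C\hat{\bar R}_\infty\bigr)\longrightarrow \mathrm{Spa}\bigl(A\hat\otimes_C\hat{\bar R}_\infty\bigr)
\]
is a $v$-cover of affinoid perfectoid spaces. Since $\Bpp{\alpha}$ is a $v$-sheaf on perfectoid spaces (by induction on $\alpha$ from the $v$-sheafiness of $\hat{\mathcal O}$, using the short exact sequences $0\to\hat{\mathcal O}\xrightarrow{t^{\alpha-1}}\Bpp{\alpha}\to\Bpp{\alpha-1}\to 0$), the map
\[
\Bpp{\alpha}\bigl(A\hat\otimes_C\hat{\bar R}_\infty\bigr)\longrightarrow \Bpp{\alpha}\bigl(B\hat\otimes_C\hat{\bar R}_\infty\bigr)
\]
is injective. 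This is presumably what the paper has in mind, and it replaces your entire third paragraph.
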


\begin{proof}
    We have known that $R$ is a direct summand of $\widehat{R}_\infty$. Hence, it suffices to prove that $$\Bpp{\alpha}(A)\widehat{\otimes}_{\Bp{\alpha}}\widehat{R}_\infty\to \Bpp{\alpha}(B)\widehat{\otimes}_{\Bp{\alpha}}\widehat{R}_\infty$$ is injective. This is a consequence of Theorem \ref{bdrtensor}.
\end{proof}

The next goal is to establish some useful lemmas to prove smallness. These lemmas are already used by Fargues and Scholze in \cite{fargues2021geometrization}.

\begin{lemma}
    Suppose $(A,A^+)$ is a perfectoid Tate--Huber pair and $\varpi$ is a pseudo-uniformizer. For any closed subalgebra $B\subseteq A$ that is topologically countably generated and contains $\varpi$, there exists a topologically countably generated closed subalgebra $B_1\subseteq A$ containing $B$ such that $$B^+/\varpi\subseteq (B_1^+/\varpi)^p,$$ where $B^+=B\cap A^+$ and $B^+_1=B_1\cap A^+$.
\end{lemma}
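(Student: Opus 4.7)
My plan is to use the perfectoid property of $A$ to adjoin countably many $p$-th roots of a countable dense subset of $B^+$, and then verify the inclusion $B^+/\varpi \subset (B_1^+/\varpi)^p$ directly. The key input is that Frobenius $x \mapsto x^p$ on $A^+/\varpi$ is surjective; this follows from $A$ being perfectoid together with a standard choice of $\varpi$ (so that $\varpi \mid p$ in $A^+$, which may be arranged in the perfectoid setting), using that $A^+/p$ is a perfect $\mathbb{F}_p$-algebra.

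Since $B$ is topologically countably generated, it is separable as a topological space, and the closed subspace $B^+ \subset B$ is then also separable. Pick a countable dense subset $S \subset B^+$. Because $\varpi B^+$ is open in $B^+$ (it contains $\varpi B_0$ for any ring of definition $B_0 \subset B^+$), the quotient $B^+/\varpi$ is discrete; hence for every $b \in B^+$, the open neighborhood $b + \varpi B^+$ of $b$ contains some $s \in S$, i.e.\ $s \equiv b \pmod \varpi$. So the image of $S$ in $B^+/\varpi$ is already all of $B^+/\varpi$.

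For each $s \in S$, use Frobenius surjectivity of $A^+/\varpi$ to pick $c_s \in A^+$ with $c_s^p \equiv s \pmod \varpi$. Let $B_1$ be the closure in $A$ of the $\mathbb{Q}_p$-subalgebra generated by $B$ and $\{c_s : s \in S\}$. This is again topologically countably generated (a countable generating set of $B$ together with the countable set $\{c_s\}_{s \in S}$ works), and the prescription $B_1^+ = B_1 \cap A^+$ automatically places each $c_s$ in $B_1^+$.

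For any $b \in B^+$, choose $s \in S$ with $s \equiv b \pmod \varpi$. Then
\[
c_s^p \equiv s \equiv b \pmod \varpi \quad \text{with } c_s \in B_1^+,
\]
which gives $b \in (B_1^+/\varpi)^p$, as required. The main obstacle is the first step, namely arranging the Frobenius surjectivity of $A^+/\varpi$ for the given pseudo-uniformizer: this is standard when $\varpi \mid p$ but requires care for a general pseudo-uniformizer, and one should either impose this condition implicitly or reduce the statement for a general $\varpi$ to the case of a comparable $\varpi'$ with $\varpi' \mid p$. Once this is in place the rest of the proof is essentially bookkeeping.
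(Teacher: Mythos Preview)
Your proposal is correct and follows essentially the same approach as the paper's proof: pick a countable dense subset of $B^+$, adjoin $p$-th roots modulo $\varpi$ using the perfectoid property of $A^+$, and take the closure. The paper glosses over the Frobenius surjectivity point you flag (it just invokes that $A^+$ is ``integrally perfectoid''), so your care about the choice of $\varpi$ is, if anything, an improvement in rigor rather than a deviation in strategy.
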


\begin{rmk}
    By definition, $\varpi B^+=\varpi A^+\cap B$, which implies that $B^+/\varpi$ is a subalgebra of $B^+_1/\varpi$.
\end{rmk}

\begin{proof}
    Let $S\subseteq B$ be a countable dense subset and define $S^+=S\cap B^+$. For any $x\in B^+$ and $n>0$, there exists an $s\in S$ such that $x-s\in \varpi^nB^+$, and thus $s\in B^+$. This proves that $S^+$ is dense in $B^+$.

    For each $s\in S^+$, since $A^+$ is integrally perfectoid, we can choose an $s_1\in A^+$ such that $s_1^p-s\in \varpi A^+$. Let $S_1=S\cup \{s_1:s\in S^+\}$ and $B_1$ be the closure of the algebra generated by $S_1$. Then $B_1\supset B$ by the density of $S$ in $B$ and satisfies the condition. By the definition of $\{s_1:s\in S^+\}$, the image of $S^+$ in $B^+/\varpi$ lies in $(B_1^+/\varpi)^p$, and hence so does $B^+/\varpi$ since $S$ is a set of topological generators.
\end{proof}

\begin{prop}\label{count}
    Suppose $(A,A^+)$ is a perfectoid Tate--Huber pair. Then any countable subset of $A$ is contained in a perfectoid topologically countably generated closed subalgebra.
\end{prop}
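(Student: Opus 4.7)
The plan is to iterate the previous lemma countably many times and take the closure of the resulting union. Given a countable subset $S \subset A$, enlarge it to the countable set $S \cup \{\varpi,\varpi^{-1}\}$ and let $B_0$ be the closure in $A$ of the $\mathbb{Z}$-subalgebra generated by this set; so $B_0$ is a topologically countably generated closed subalgebra of $A$. Applying the previous lemma inductively produces a chain $B_0 \subset B_1 \subset B_2 \subset \cdots$ of topologically countably generated closed subalgebras with $B_n^+/\varpi \subset (B_{n+1}^+/\varpi)^p$ for each $n$. Define $B := \overline{\bigcup_n B_n} \subset A$. A countable dense subset of $B$ is obtained as the union of countable dense subsets of the $B_n$, so $B$ is still topologically countably generated.

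Next I check that $B$ is perfectoid. Since $\varpi,\varpi^{-1} \in B$, the ring $B$ is Tate with pseudo-uniformizer $\varpi$; it is complete because it is closed in the complete ring $A$, and uniform because $B^+ := B \cap A^+$ is bounded (inherited from $A^+$ being bounded in $A$). The relation $\varpi^p \mid p$ in $B^+$ follows from the same in $A^+$, since $p\cdot \varpi^{-p} \in B \cap A^+ = B^+$. Moreover, iterating the relation $\varpi B^+ = \varpi A^+ \cap B$ of the remark preceding the lemma gives $\varpi^k A^+ \cap B = \varpi^k B^+$ for all $k \geq 0$, which will be used below.

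The heart of the matter is Frobenius surjectivity on $B^+/\varpi$. Given $x \in B^+$, density of $\bigcup_n B_n$ in $B$ together with the identity $\varpi A^+\cap B = \varpi B^+$ provides, for some $n$, an element $z \in B_n^+$ with $x - z \in \varpi B^+$. Then, fixing a countable dense subset $T_n \subset B_n$ and intersecting with $A^+$ (by the same argument as in the proof of the preceding lemma, this yields a dense subset of $B_n^+$), pick $s \in T_n \cap B_n^+$ with $s - z \in \varpi B_n^+$. The construction in the lemma supplies $s_1 \in B_{n+1}^+ \subset B^+$ with $s_1^p \equiv s \pmod{\varpi B_{n+1}^+}$, hence $s_1^p \equiv x \pmod{\varpi B^+}$. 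This establishes Frobenius surjectivity modulo $\varpi$, which together with the other properties verified above shows that $B$ is perfectoid.

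The main conceptual obstacle is making sure the Frobenius lift of a general $x \in B^+$ actually lands in $B$ rather than merely in $A$. The preceding lemma is strong enough because it produces honest $p$-th-root lifts for every element of a dense subset of $B_n^+$, so after one layer of $\varpi$-adic approximation the lift of $x$ can be replaced by the lift of a nearby $s \in T_n \cap B_n^+$, which automatically lies in $B_{n+1}^+ \subset B^+$. The inclusion of $\varpi^{-1}$ in the initial generating set is essential: it both makes $B$ a Tate ring and lets us write $p/\varpi^p$ as an element of $B$.
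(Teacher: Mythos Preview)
Your proof is correct and follows essentially the same approach as the paper: iterate the preceding lemma to build a chain $B_0\subset B_1\subset\cdots$, take the closure $B$ of the union, and verify that $B$ is perfectoid and topologically countably generated. Your argument is in fact more careful than the paper's on two points: you explicitly throw $\varpi^{\pm1}$ into the generating set so that $B$ is visibly Tate, and you spell out why Frobenius surjectivity on $\bigcup_n B_n^+/\varpi$ passes to $B^+/\varpi$ via approximation. One small redundancy: once you have $z\in B_n^+$ with $x-z\in\varpi B^+$, the \emph{statement} of the lemma already gives $s_1\in B_{n+1}^+$ with $s_1^p\equiv z\pmod{\varpi}$, so the intermediate pass through the dense subset $T_n$ is unnecessary.
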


\begin{proof}
    Fix a pseudo-uniformizer $\varpi$ such that $\varpi^p\mid p$.

    Assume $S\subseteq A$ is a countable subset containing $\varpi$, and let $B$ be the closed subalgebra topologically generated by $S$. By the above lemma, there exists a sequence of closed subalgebras $\{B_i:i\geq 0\}$ such that $B_0=B$ and $B_i^+/\varpi\subseteq (B_{i+1}^+/\varpi)^p$. Let $B_{\infty}$ be the union of $B_i$ and $\widehat{B}_{\infty}$ its closure. It suffices to prove that $\widehat{B}_\infty$ is a perfectoid topologically countably generated closed subalgebra.

    Let $S_i\subseteq B_i$ be a countable dense subset for each $i$. Then $\bigcup S_i$ is a countable dense subset of $\widehat{B}_\infty$, proving that $\widehat{B}_\infty$ is topologically countably generated. Moreover, observe that $B_\infty^+/\varpi=\bigcup B_i^+/\varpi$, and its Frobenius is surjective by the definition of $B_i$. Hence $\widehat{B}_\infty$ is perfectoid.
\end{proof}

\begin{rmk}
    This proposition can be generalized to the relative case, just change $S$ to a countably generated dense subalgebra.
\end{rmk}

\begin{cor}\label{cor: countable subset countable generated}
    Under the above assumptions, any countable subset of $\Bpp{\alpha}(A)$ is contained in the $\Bpp{\alpha}$ of a perfectoid topologically countably generated closed subalgebra.
\end{cor}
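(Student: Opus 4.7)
The plan is to reduce to Proposition \ref{count} via tilting. Every element of $\Bpp{\alpha}(A) = (W(A^{\flat,+})/\xi^{\alpha})[\tfrac{1}{p}]$ lifts to an element of $W(A^{\flat,+})[\tfrac{1}{p}]$, and a Witt series expansion for such a lift packages each element as countably many Teichm\"uller coefficients drawn from $A^{\flat,+}$. Assembling these coefficients over a countable $S \subset \Bpp{\alpha}(A)$ gives a countable subset of $A^\flat$, to which Proposition \ref{count} applies in the perfectoid Tate--Huber pair $(A^\flat,A^{\flat,+})$; untilting the resulting subalgebra then yields the desired $B \subset A$.

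Concretely, I would first fix a pseudo-uniformizer $\varpi^\flat \in A^{\flat,+}$ so that $\xi := [\varpi^\flat] - p$ generates $\ker\theta$. For each $s \in S$ I choose a lift $\hat s = p^{-N_s}\sum_{n\geq 0}[a_{s,n}]\,p^n \in W(A^{\flat,+})[\tfrac{1}{p}]$ with $a_{s,n} \in A^{\flat,+}$, and collect $T := \{\varpi^\flat\}\cup\{a_{s,n} : s\in S,\ n\geq 0\}$, a countable subset of $A^\flat$. Proposition \ref{count} produces a topologically countably generated closed perfectoid subalgebra $B^\flat \subset A^\flat$ containing $T$. Because $\varpi^\flat \in B^{\flat,+}$, the element $\xi$ lies in $W(B^{\flat,+})$, so setting $B^+ := W(B^{\flat,+})/\xi$ and $B := B^+[\tfrac{1}{p}]$ untilts $B^\flat$ to a perfectoid Tate--Huber pair embedded in $A$ via $W(B^{\flat,+}) \hookrightarrow W(A^{\flat,+})$, and topologically countably generated by Teichm\"uller lifts of topological generators of $B^\flat$. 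By construction each $\hat s$ lies in $W(B^{\flat,+})[\tfrac{1}{p}] = \mathbb{A}_{inf}(B)[\tfrac{1}{p}]$, so $s$ is in the image of $\Bpp{\alpha}(B) \to \Bpp{\alpha}(A)$.

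The main obstacle will be checking that this last map is injective, so that the containment $S \subset \Bpp{\alpha}(B)$ is literal rather than merely up to preimages. I plan to prove injectivity by induction on $\alpha$: the base case $\alpha = 1$ reduces to the tautological injectivity $B^+ \hookrightarrow A^+$ after inverting $p$, and the inductive step follows from the snake lemma applied to $0 \to W(-^{\flat,+})/\xi \xrightarrow{\xi^{\alpha-1}} W(-^{\flat,+})/\xi^\alpha \to W(-^{\flat,+})/\xi^{\alpha-1} \to 0$, which is exact since $\xi$ is a non-zero-divisor in both Witt rings. Modulo this purely algebraic point, the tilting--untilting strategy closes the corollary.
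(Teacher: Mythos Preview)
Your approach works but differs from the paper's. The paper's one-line proof inducts on $\alpha$ directly on the untilted side: the case $\alpha=1$ is Proposition~\ref{count} applied to $A$ itself, and the step $\alpha-1 \Rightarrow \alpha$ uses the exact sequence $0 \to A \to \Bpp{\alpha}(A) \to \Bpp{\alpha-1}(A) \to 0$ to split a countable subset of $\Bpp{\alpha}(A)$ into a countable image in $\Bpp{\alpha-1}(A)$ (handled by induction) plus a countable correction set in $A$ (handled by Proposition~\ref{count} again). You instead tilt, unpack everything into Teichm\"uller coefficients, apply Proposition~\ref{count} once to $A^\flat$, and untilt, with induction on $\alpha$ relegated to the injectivity check at the end. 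The paper's route is shorter and never leaves the untilted world; yours makes it more transparent where the countability actually lives.

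Two caveats on your write-up. The specific generator $\xi = [\varpi^\flat] - p$ requires $(\varpi^\flat)^\sharp = p$, i.e.\ a compatible system of $p$-power roots of $p$ in $A$, which need not exist; take instead any generator of $\ker\theta$ and throw its Teichm\"uller coefficients into $T$ (or, since everything is over $C \ni \zeta_{p^\infty}$, work relative to $C^\flat$ and use the standard $\xi$ built from $\epsilon$). And your base case ``tautological injectivity $B^+ \hookrightarrow A^+$'' is not tautological as stated, because you defined $B^+$ abstractly as $W(B^{\flat,+})/\xi$ rather than as a literal subring of $A^+$: one must verify $\xi W(A^{\flat,+}) \cap W(B^{\flat,+}) = \xi W(B^{\flat,+})$, which follows from $B^{\flat,+} = B^\flat \cap A^{\flat,+}$ (built into the construction of Proposition~\ref{count}) together with a short $p$-adic approximation, but it deserves a sentence.
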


\begin{proof}
    Induction on $\alpha$. When $\alpha=1$, it has been proved above. Suppose we have already proved the case for $\alpha-1$. Let $(A,A^+)$ be a perfectoid Tate--Huber pair and $S\subseteq \Bpp{\alpha}(A)$ be a countable subset. By the induction hypothesis, there exists a topologically countably generated perfectoid subalgebra $A_1\subseteq A$ such that $\Bpp{\alpha-1}(A_1)$ contains the projection of $S$ in $\Bpp{\alpha-1}(A)$.

    Thus for any $s\in S$, there exists a $s'\in \Bpp{\alpha-1}(A_1)$ such that $s'-s\in t^{\alpha-1}\Bpp{\alpha}(A_1)$ and let $u_s$ denote an element in $A$ such that $t^{\alpha-1}u_s=s-s'$. Then, enlarge $A_1$ so that it is perfectoid, topologically countably generated and contains all $u_s$, then $A_1$ satisfies the assumption.
\end{proof}

\subsection{$V$-descent of connections}

Suppose $\pi:\mathrm{Spa}(B,B^+)\to \mathrm{Spa}(A,A^+)$ is a $v$-cover of affinoid perfectoid space over $C$. We thus have a coequalizer diagram:
$$0\to A\to B\rightrightarrows B\widehat{\otimes}_AB.$$ Define $X_B:=\mathrm{Spa}(\Bpp{\alpha}(B,B^+))\times_{\mathrm{Spa}(\Bpp{\alpha}(A,A^+))}X$ and similarly $X_{B\widehat{\otimes}_AB}.$ Thus we have two morphisms $$\pi_{1,2}:X_{B\widehat{\otimes}_AB}\to X_B.$$

\begin{theo}\label{vstack}
    For any vector bundle with an integrable connection $(M,\nabla)$ on $X_B$ as well as a descent datum $$\phi: \pi_{1}^{*}(M,\nabla)\cong \pi_2^*(M,\nabla),$$ there exists a unique vector bundle with an integrable connection $(N,\nabla')$ as well as an isomorphism $\iota:\pi^*(N,\nabla')\cong (M,\nabla)$. Such that $\phi$ is equiped with the natural morphism
    $$\Bpp{\alpha}(\mathbf{trans})\otimes id_N:\Bpp{\alpha}(B\widehat{\otimes}_AB)\otimes_{\Bpp{\alpha}(A)}N\cong \Bpp{\alpha}(B\widehat{\otimes}_AB)\otimes_{\Bpp{\alpha}(A)}N$$ under $\iota$, where $\mathbf{trans}$ is the map $x\widehat{\otimes}y\to y\widehat{\otimes}x$.
\end{theo}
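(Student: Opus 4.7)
The strategy is to exploit the sheafified Riemann--Hilbert correspondence of Section 3 as the \emph{small correspondence} alluded to in the introduction: the plan is to translate the descent problem for a vector bundle with integrable $t$-connection on $X_B$ into a descent problem for a $\Gamma$-equivariant module on the pro-\'etale tower, where $v$-descent is a consequence of the $v$-sheafiness of $\Bpp{\alpha}$ together with the almost purity descent of vector bundles on perfectoid spaces. This avoids asserting that the prestack of analytic vector bundles is itself a $v$-stack.

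The first step is to reduce to a local situation. Since $t$-$\mathrm{MIC}_r$ is an \'etale sheaf, it suffices to construct $(N,\nabla')$ after passing to an \'etale cover of $X$ by affinoids $U = \mathrm{Spa}(R,R^+)$ each carrying a toric chart $\T \to R$ over $\Bpp{\alpha}(K,K^+)$. By Proposition \ref{etalebdrp} and Corollary \ref{etalebasechange}, such a toric chart base-changes compatibly to both the $A$- and $B$-sides, so $X_A|_U$ and $X_B|_U$ each inherit toric charts over $\Bpp{\alpha}(A)$ and $\Bpp{\alpha}(B)$ respectively.

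On such a local piece I invoke the construction of subsection 3.3: after a further \'etale localization (Lemma \ref{refine}) the pair $(M,\nabla)|_{X_B|_U}$ is classified by a finite free module carrying a continuous semilinear $\Gamma$-action on the ring $\widehat{(\Bpp{\alpha}(B)\hat{\otimes}_{\Bp{\alpha}} R)}_\infty$, and the descent datum $\phi$ is translated into a $\Gamma$-equivariant descent datum on this module. The injectivity corollary following (\ref{*}) in Section 4.1, applied both to $\pi$ and to its iterated fibre product $B\hat{\otimes}_A B$, together with $v$-descent of vector bundles on the perfectoid tower $\hat{\bar R}_\infty \hat{\otimes}_C B$ over $\hat{\bar R}_\infty \hat{\otimes}_C A$, descends this $\Gamma$-equivariant module to a $\Gamma$-equivariant module on the $A$-side. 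Applying the inverse of the local Riemann--Hilbert construction of subsection 3.3 yields $(N,\nabla')|_U$ together with a canonical $\iota|_U$ intertwining with $\phi$.

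Global existence then follows by gluing these local pieces via the \'etale sheaf property of $t$-$\mathrm{MIC}_r$ on $X_A$, with overlap compatibility provided by the toric-chart independence of subsection 3.4; uniqueness of $(N,\nabla')$ and of $\iota$ is a direct consequence of the injectivity of $RH$ proved in subsection 3.5. The principal obstacle I anticipate is the passage from the isomorphism of \emph{sheaves of isomorphism classes} established in the main theorem of Section 3 to a functorial assignment at the level of groupoids: one must verify that $\phi$, once transported through the Riemann--Hilbert dictionary, really does assemble into an honest cocycle on the $\Gamma$-equivariant module side, and that the resulting $\iota$ satisfies precisely the swap-map compatibility demanded in the statement. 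Working with the framed variant $t$-$\mathrm{MIC}_r^{frame}$ and then $\mathrm{GL}_r(\inte{X})$-quotienting, as in subsection 3.2, is what makes the bookkeeping tractable, since framings promote isomorphism classes into genuine objects on which descent data can be compared.
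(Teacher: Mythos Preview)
Your strategy is essentially the paper's: localize to an affinoid $\mathrm{Spa}(R,R^+)$ with a toric chart, convert $(M,\nabla)$ on the $B$-side into a semilinear $\Gamma$-module on the perfectoid tower $\hat{R}_{B,\infty}$, use $v$-descent of $\Bpp{\alpha}$-bundles on perfectoid spaces to descend that $\Gamma$-module to $\hat{R}_{A,\infty}$, and then run the local Riemann--Hilbert construction backwards to produce $(N,\nabla')$. So the architecture is right.

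The place where your outline diverges from the paper, and where the obstacle you yourself flag would bite, is in how you get back down to $X_A$. You propose to glue via the \emph{sheaf} $t\text{-}\mathrm{MIC}_r$ and to invoke chart-independence and the injectivity of $RH$. But $t\text{-}\mathrm{MIC}_r^{sh}$ is only a sheaf of isomorphism classes, so it cannot carry an honest descent datum for you; you would indeed be stuck at exactly the point you anticipate. The paper avoids this entirely by never appealing to the sheafified correspondence. Concretely: it writes $\nabla$ as $\phi_\gamma$ and picks $N_1$ large enough that $\exp(\phi_\gamma)$ converges on $p^{N_1}\Gamma$ (so one only obtains a $p^{N_1}\Gamma$-representation, not a full $\Gamma$-one; your proposal glosses over this), descends on the perfectoid level, and then applies the explicit Sen-theoretic decompletion to land on some finite level $R_{A,N}$ with $N\ge N_1$. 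The final step is ordinary \'etale descent of genuine vector bundles with $t$-connection along the finite Galois cover $R_A\to R_{A,N}$, which is a groupoid-level statement and needs no reference to $t\text{-}\mathrm{MIC}_r^{sh}$ at all. Your citation of Lemma \ref{refine} and ``the inverse of subsection 3.3'' is also slightly off: the passage from $(M,\nabla)$ to a $\Gamma$-module is the exponentiation appearing in the surjectivity argument of subsection 3.5, not the direction treated in 3.3.
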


\begin{proof}
    We may assume that $X$ is affinoid, say $\mathrm{Spa}(R,R^+)$ and $R$ admits an \'etale homomorphism
    $$f:\T=\Bp{\alpha}\langle T_1^{\pm1},\dots,T_d^{\pm1}\rangle\to R.$$ Use $R_A$ to denote $\Bpp{\alpha}(A,A^+)\widehat{\otimes}R$ and similarly, the map $f_A:\T_A\to R_A$.

    Recall our notations $R_{N},\ R_{\infty},\ \widehat{R}_{\infty}$ in Subsection \ref{toricalg} and $R_{*,-},\ \widehat{R}_{*,\infty}$ are the same construction for $f_*:\T_*\to R_*$ where $*$ is an object in $\Perf$ and $-\in\{N,\ \infty\}$. $\Gamma\cong\mathbb{Z}_p^d$ is the Galois group.

    Fix a descent datum $(V,\nabla,i:V\widehat{\otimes}_R R_B\cong R_B\widehat{\otimes}_RV)$ such that $V$ is free over $R_B$, $\nabla$ is a connection on $V$ and $i$ is an isomorphism between $R_B\widehat{\otimes}_RR_B=R_{B\widehat{\otimes}_AB}$ modules.

    Consider the diagram
    $$\xymatrix{0\ar[r] &R\ar[r]\ar[d]&R_B\ar@<0.5ex>[r]\ar@<-0.5ex>[r]\ar[d] & R_{B\widehat{\otimes_A}B}\ar[d]\\ 0\ar[r] &R_{N}\ar[r]&R_{B,N}\ar@<0.5ex>[r]\ar@<-0.5ex>[r]& R_{B\widehat{\otimes}_AB,N}}$$ where the bottom ones are a $\Gamma/p^N\Gamma$-Galois cover of the upper ones and all maps are $\Gamma$-equivariant (under the natural action, see Subsection \ref{ToricSentheory} ). Similarly, we have
    $$\xymatrix{0\ar[r] &R\ar[r]\ar[d]&R_B\ar@<0.5ex>[r]\ar@<-0.5ex>[r]\ar[d] & R_{B\widehat{\otimes}_AB}\ar[d]\\
    0\ar[r] &\widehat{R}_{\infty}\ar[r]&\widehat{R}_{B,\infty}\ar@<0.5ex>[r]\ar@<-0.5ex>[r]& \widehat{R}_{B\widehat{\otimes}_AB,\infty}}.$$

    It suffices to prove that there exists an $N$ such that $(V,\nabla,i)\otimes_RR_N$ can descent to $R_N$. In fact, if there exists such an $N$. Then the solution of the descent datum $(V,\nabla,i)\otimes_RR_N$ must be
    $$V'_N:=\{v\in V:i(v\widehat{\otimes}1)=1\widehat{\otimes}v\text{ in } R_{B,N}\otimes V\}$$
    and the connection $\nabla'_N$ must be the restriction of $\nabla_{R_N}$ on $V_N'$. Hence, $(V'_N,\nabla'_N)$ must be invariant under the Galois action of $\Gamma/p^N\Gamma$. By the usual \'etale descent of vector bundles, $(V_N',\nabla_N')$ can descent to $X$.

    Recall, we may write $\nabla$ as a $\mathbb{Z}_p$ linear map $\phi_{-}:\Gamma\to \End_{\Bp{\alpha}}(V)$ such that $$\phi_{\gamma}(ax)=\partial_{\gamma}(a)x+a\phi_{\gamma}(x).$$ Choose $N_1$ large enough such that $\exp(\phi_{\gamma})$ converges uniformly on $p^{N_1}\Gamma$. Base change to $\widehat{R}_{B,\infty}$, we get a finite projective module $\widehat{R}_{B,\infty}\otimes_{R_B}V$ as well as a $p^{N_1}\Gamma$'s semi-linear action, which is defined as the tensor product of the natural action on $\widehat{R}_{\infty}$ and $\exp(\phi_{-})$ on $V$, plus a descent datum (of generalised representation) along
    $$\xymatrix{
    0\ar[r] &\widehat{R}_{\infty}\ar[r]&\widehat{R}_{B,\infty}\ar@<0.5ex>[r]\ar@<-0.5ex>[r]& \widehat{R}_{B\widehat{\otimes}_AB,\infty}}.$$ By $v$-descent of $\Bpp{\alpha}$-bundle of perfectoid spaces, the above datum descents to a generalised representation of $p^{N_1}\Gamma$ over $\widehat{R}_{\infty}$. The local construction of the Riemann--Hilbert correspondence provides an $N\geq N_1$ and a vector bundle with a connection over $R_N$, and this is a descent of $(V,\nabla,i)\otimes_{R}R_N$.
\end{proof}

\subsection{Proof of the smallness}

The proof to smallness is essentially the same as Heuer's proof when $\alpha=1$. We will use the following important fact due to Fargues and Scholze.

\begin{prop}[\cite{fargues2021geometrization}, Proposition \uppercase\expandafter{\romannumeral3}.1.3]
    Suppose $X$ is a $v$-stack over $\Perf$, then $X$ is small if for any $\mathrm{Spa}(R,R^+)\in \Perf$,
    $$X(R,R^+)=\varinjlim X(R_i,R_i^+)$$ where the colimit takes among all topologically countably generated closed subalgebras.
\end{prop}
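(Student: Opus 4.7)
The overall strategy is to exploit the cardinality bound implicit in the countable generation hypothesis, exhibiting $X$ as the quotient of a small $v$-sheaf by a small equivalence relation, which is Scholze's definition of smallness for a $v$-stack. The key set-theoretic input is that any topologically countably generated perfectoid Tate--Huber pair over $C$ has cardinality at most $2^{\aleph_0}$, so there is only a set $\{(R_i, R_i^+)\}_{i \in I}$ of isomorphism classes of such pairs.

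From this I would construct an atlas
$$U := \bigsqcup_{i \in I}\ \bigsqcup_{[x_i] \in \pi_0 X(R_i, R_i^+)} \mathrm{Spa}(R_i, R_i^+),$$
a small $v$-sheaf, equipped with the tautological morphism $U \to X$ sending the component indexed by $(i, [x_i])$ to the map classified by $x_i$. To verify $v$-surjectivity, take $(R, R^+) \in Perf$ and a section $s \in X(R, R^+)$: the hypothesis yields a topologically countably generated closed subalgebra $R_j \subset R$ together with a descended $s_j \in X(R_j, R_j^+)$; after identifying $(R_j, R_j^+)$ with its representative from our set, the composition $\mathrm{Spa}(R, R^+) \to \mathrm{Spa}(R_j, R_j^+) \hookrightarrow U$ exhibits $s$ as a pullback from $U$ already on the trivial cover, so $U \to X$ is even a representable epimorphism of $v$-sheaves.

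It remains to show $U \times_X U$ is small, i.e.\ that $X$ has small diagonal. Concretely, for fixed $s_1, s_2 \in X(R, R^+)$ one needs the isomorphism sheaf $\underline{\mathrm{Isom}}_X(s_1, s_2)$ to be a small $v$-sheaf. I would apply the hypothesis in two stages: first, reduce $s_1, s_2$ to sections $s_1', s_2'$ over some countably generated $(R_k, R_k^+)\subset (R,R^+)$; then, at the level of isomorphisms between $s_1'$ and $s_2'$, argue that any such isomorphism over a test $(S, S^+)$ is itself determined by a countably generated subalgebra of $S$, reducing $\underline{\mathrm{Isom}}_X$ to a colimit of representables indexed again by a set.

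The main obstacle is precisely this last step. The proposition's hypothesis is formulated only for $X$ itself, and one needs to upgrade it to an analogous filtered-colimit statement for the Isom sheaves. The clean way would be to observe that an isomorphism in $X(S, S^+)$ is, by the $v$-stack structure, extra data supported on $S$ that is compatible with the already countably-approximated sections; combined with the fact that two sections of a $v$-sheaf agreeing on a filtered colimit of subalgebras must agree, this should force an isomorphism over $S$ to live on a countably generated subalgebra. Making this precise requires unwinding Scholze's definitions carefully, and is where I expect the real content of Fargues--Scholze's argument to be concentrated.
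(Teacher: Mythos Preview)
Your construction of the atlas $U$ and the surjectivity argument match the paper's proof essentially verbatim. The paper then disposes of $U \times_X U$ in a single sentence: it observes that this fibre product ``satisfies the same limit property'' and hence is itself small by the criterion (now applied to a $v$-sheaf, where the recursion terminates trivially).

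The ``main obstacle'' you flag is not one, and arises from reading the hypothesis too weakly. The equality $X(R,R^+) = \varinjlim X(R_i, R_i^+)$ is an equivalence of \emph{groupoids}, not merely a bijection on $\pi_0$. In a filtered $2$-colimit of groupoids, any morphism between two objects coming from a given stage already lives at some later stage. So if $s_1, s_2 \in X(R,R^+)$ descend to $X(R_j, R_j^+)$, any isomorphism $\phi\colon s_1 \cong s_2$ in $X(R,R^+)$ automatically descends to some $X(R_k, R_k^+)$ with $R_j \subset R_k \subset R$ countably generated---no appeal to the $v$-stack structure or to a separate analysis of Isom sheaves is required. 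Combined with the trivial fact that a map $\mathrm{Spa}(R,R^+) \to U$ lands in a single component and is determined by a ring map from a countably generated source, this gives $(U \times_X U)(R,R^+) = \varinjlim (U \times_X U)(R_i, R_i^+)$ directly. The ``real content'' you anticipated in Fargues--Scholze is nothing more than this groupoidal reading of the hypothesis.
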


\begin{proof}(Fargues \& Scholze)
    The isomorphic classes of perfectoid topologically countably generated Tate rings forms a set and let $\mathcal{C}$ a set of representative elements. We can choose $$T=\bigsqcup_{\mathrm{Spa}(R_i,R_i^+),\ *\in X(R_i,R_i^+)}\mathrm{Spa}(R_i,R_i^+)$$ and it covers $X$ (cf. Proposition \ref{count}). The equivalent relation $T\times_{X}T$ satisfies the same limit property and thus $X$ is small.
\end{proof}

\begin{lemma}\label{smalldr}
    For any $\mathrm{Spa}(A,A^+)\in \Perf$ and a vector bundle $V/X_A$, there exists a perfectoid topologically countably generated closed subalgebra $A'\subseteq A$, such that there exists a vector bundle $V'/X_{A'}$ and $V$ is isomorphic to the pull back of $V'$.
\end{lemma}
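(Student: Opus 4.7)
The plan is to describe $V$ by finitely many matrices with entries in rings of the form $\Bpp{\alpha}(A)\hat{\otimes}_{\Bp{\alpha}}R_i$, extract a countable set of coefficients in $\Bpp{\alpha}(A)$ sufficient to record every entry, and then descend via the corollary to Proposition \ref{count} together with the injectivity corollary to the theorem comparing $\Bpp{\alpha}(A)\hat{\otimes}_{\Bpp{\alpha}(D)}\Bpp{\alpha}(B)$ with $\Bpp{\alpha}(A\hat{\otimes}_DB)$.

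First, I would assume $X$ is quasi-compact (the case needed for smallness, with the general case following by locality). By the structure proposition for smooth adic spaces over $\Bpp{\alpha}$, cover $X$ by finitely many affinoids $U_i=\mathrm{Spa}(R_i,R_i^+)$ each admitting a toric chart $\T\to R_i$ over $\Bp{\alpha}$, and refine so that all finite intersections $U_{i_1\cdots i_k}$ are again affinoids of this form. Writing $S_{i_1\cdots i_k}:=\Bpp{\alpha}(A)\hat{\otimes}_{\Bp{\alpha}}R_{i_1\cdots i_k}$, the base change $X_A$ is covered by the $\mathrm{Spa}(S_i)$. On each piece $V$ restricts to a finitely generated projective $S_i$-module, hence the image of some idempotent $P_i\in M_{n_i}(S_i)$; the gluing on overlaps is given by an invertible matrix $\theta_{ij}$ over $S_{ij}$ with $\theta_{ij}P_j=P_i\theta_{ij}$, satisfying the usual cocycle on triples. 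This encodes $V$ by the finite list of entries of the $P_i$ and $\theta_{ij}$ in the rings $S_{i_1\cdots i_k}$.

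Next, each such entry is a Cauchy limit of finite sums of elementary tensors $\sum a_k\otimes b_k$ with $a_k\in\Bpp{\alpha}(A)$ and $b_k\in R_{i_1\cdots i_k}$; only countably many approximations are needed per entry, and there are only finitely many entries, so the total set $\Sigma$ of $a_k$'s collected is countable. By the corollary to Proposition \ref{count}, $\Sigma$ lies inside $\Bpp{\alpha}(A')$ for some perfectoid topologically countably generated closed subalgebra $A'\subset A$.

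Finally, applying the injectivity corollary to the $v$-cover $A'\to A$ and to each $R_{i_1\cdots i_k}$, the ring $\Bpp{\alpha}(A')\hat{\otimes}_{\Bp{\alpha}}R_{i_1\cdots i_k}$ embeds as a closed subring of $S_{i_1\cdots i_k}$. Since all approximating tensors lie in this closed subring and remain Cauchy there, each entry of the matrices $P_i,\theta_{ij}$ descends uniquely to $\Bpp{\alpha}(A')\hat{\otimes}_{\Bp{\alpha}}R_{i_1\cdots i_k}$; the idempotency relation $P_i^2=P_i$, the intertwining $\theta_{ij}P_j=P_i\theta_{ij}$, and the triple cocycle all transport by injectivity into the smaller ring, yielding a descent datum for a vector bundle $V'/X_{A'}$ whose pullback to $X_A$ is canonically $V$. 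The main obstacle will be this last closedness step, namely passing from \emph{each entry is approximable by countably many elements of $\Bpp{\alpha}(A)$} to \emph{each entry itself descends}; it depends essentially on the injectivity corollary to make $\Bpp{\alpha}(A')\hat{\otimes}_{\Bp{\alpha}}R_{i_1\cdots i_k}$ genuinely closed in $S_{i_1\cdots i_k}$ and on its own completeness, so that the Cauchy approximations actually converge inside the descended ring.
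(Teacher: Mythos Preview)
Your strategy of encoding $V$ by finitely many matrices, extracting countably many coefficients, and descending via the corollary to Proposition~\ref{count} is essentially the paper's strategy. However, there is one genuine gap and one point where your route diverges from the paper's.

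The gap is the sentence ``On each piece $V$ restricts to a finitely generated projective $S_i$-module.'' You are asserting a Kiehl-type statement: that vector bundles on $\mathrm{Spa}(S_i)$ coincide with finite projective $S_i$-modules. The paper only establishes that the rings $S_i=\Bpp{\alpha}(A)\hat{\otimes}_{\Bp{\alpha}}R_i$ are sheafy; it does not prove such a global-sections equivalence, and the usual sousperfectoid criteria do not apply since $S_i$ has nilpotents. One could try to rescue this by reducing modulo $t$ (where the smoothoid theory applies) and lifting idempotents through the nilpotent thickening, but that requires an argument you have not given.

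The paper sidesteps this entirely by a different reduction: instead of fixing a cover of $X$ and hoping $V$ is projective on each piece, it takes a \emph{rational open cover $\mathcal{U}$ of $X_A$} on which $V$ is actually trivial, and then spends the first half of the proof showing that $\mathcal{U}$ itself descends to a cover of $X_{A'}$ for some countably generated $A'$ (using that rational opens are defined by finitely many elements of the dense subring $\Bpp{\alpha}(A)[x_1,\dots,x_s]$). Only then does it descend the transition matrices. Your idempotent approach would be slicker if the projectivity were available, but as written it rests on an unproved input; the paper's approach is more elementary at the cost of the extra ``descend the cover'' step.

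A minor point: you invoke the injectivity corollary for the inclusion $A'\hookrightarrow A$, but that corollary is stated for $v$-covers $\mathrm{Spa}(B)\to\mathrm{Spa}(A)$, and it is not immediate that $\mathrm{Spa}(A)\to\mathrm{Spa}(A')$ is one. The injectivity you need is still true (and the paper's proof implicitly uses it too when it asserts the gluing data descends), but it should be justified directly via the direct-summand decomposition $R\hookrightarrow\hat{R}_\infty$ rather than by citing that corollary.
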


\begin{proof}
    Since $X$ is quasi-compact, there exists a finite affinoid covering of $X$ and we may assume that $X\cong \mathrm{Spa}(R,R^+)$ is affinoid which admits a toric chart. There is a rational open covering $\mathcal{U}$ of $X_A$ trivialize $V$, we first prove that there exists a perfectoid topologically countably generated closed subalgebra $A'\subseteq A$ such that $\mathcal{U}$ comes from a pull back of an open covering of $X_{A'}$. Choose a set of topological generators $\{x_1,x_2,\dots,x_s\}$ of $R$ and by definition, $$\Bpp{\alpha}(A)[x_1,x_2,\dots,x_s]$$ is a dense subalgebra of $R_A$. By general facts about adic spaces, each rational open of $X_A$ can be defined by finitely many elements in $\Bpp{\alpha}(A)[x_1,x_2,\dots,x_s]$ and thus by finitely many elements of $A$. Then, Corollary \ref{cor: countable subset countable generated} implies the claim.

    Second, we need to prove the gluing data can be define over some $X_{A'}$ for some perfectoid topologically countably generated closed subalgebra $A'\subseteq A''\subseteq A$. This is equivariant to proving
    $$\varinjlim_i R_{A_i}=R_{A}.$$ Since $\Bpp{\alpha}(A)[x_1,x_2,\dots,x_s]$ is dense in $R_A$, any element in $R_A$ is a (countable) limit of some elements in $\Bpp{\alpha}(A)[x_1,x_2,\dots,x_s]$. Thus any element of $R_A$ is determined by countably many elements in $\Bpp{\alpha}(A)$ and Corollary \ref{cor: countable subset countable generated} also implies the claim.
\end{proof}

\begin{theo}
    The $v$-stacks $\M_{\dR,\alpha,r,X}$ and $\M_{{\mathbb{B}_{\alpha}},r,X}$ are small.
\end{theo}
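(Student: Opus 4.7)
The plan is to invoke the Fargues--Scholze smallness criterion recalled at the start of this subsection: since $M_{{\bdrp_{,\alpha}}-loc,X}$ is a $v$-stack by definition and $M_{dR,\alpha,X}$ is a $v$-stack by Theorem \ref{vstack}, smallness reduces in each case to verifying the countable-colimit property $M(R,R^+)=\varinjlim M(R_i,R_i^+)$, where $R_i$ ranges over topologically countably generated perfectoid closed subalgebras of $R$. The strategy, following Heuer, is to combine the bundle-side argument of Lemma \ref{smalldr} with the coefficient-side argument of Proposition \ref{count} (and its corollary for $\Bpp{\alpha}$).

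I would treat $M_{dR,\alpha,X}$ first. Starting from $(V,\nabla)$ over $X_A$, Lemma \ref{smalldr} already produces a countably generated perfectoid $A_0\subset A$ and a vector bundle $V_0/X_{A_0}$ whose pullback is $V$. After refining to a finite affinoid \'etale covering of $X_{A_0}$ on whose pieces $V_0$ becomes free and which admit toric charts, the $t$-connection is encoded by finitely many matrices $\phi_\gamma$ with entries in $R_A=\Bpp{\alpha}(A)\hat\otimes_{\Bp{\alpha}}R$. Using that $\Bpp{\alpha}(A)[x_1,\dots,x_s]$ is dense in $R_A$ (as in the proof of Lemma \ref{smalldr}), each such entry is determined by countably many elements of $\Bpp{\alpha}(A)$; the corollary following Proposition \ref{count} then absorbs all of them into the $\Bpp{\alpha}$ of a single countably generated perfectoid $A_1\supset A_0$. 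The Leibniz rule and integrability are polynomial identities in these matrix entries, and they descend to $R_{A_1}$ once $R_{A_1}\hookrightarrow R_A$ is injective, which is precisely the content of the corollary preceding Proposition \ref{count} applied to the inclusion $(A_1,A_1^+)\hookrightarrow(A,A^+)$ and the toric chart $\T_C\to R$.

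For $M_{{\bdrp_{,\alpha}}-loc,X}$, an object over $\mathrm{Spa}(A,A^+)\times_{\mathrm{Spa}(C,C^+)}\bar X$ is a $\bdrp/t^\alpha$-module that becomes free on some $v$-covering; after passing to a trivialising cover it is recorded by finitely many transition matrices with entries in $\bdrp/t^\alpha$ over an affinoid perfectoid covering of $\bar X_A$. An entirely analogous application of Lemma \ref{smalldr} to the underlying bundle, combined with the corollary to Proposition \ref{count} for the coefficients of the transition matrices, descends the whole datum to a single countably generated perfectoid subalgebra, so the countable-colimit condition holds.

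The main obstacle is to confirm that after the two-stage enlargement --- first using Lemma \ref{smalldr} for the bundle and then the corollary to Proposition \ref{count} for the coefficients of $\nabla$ --- the pair $(V_1,\nabla_1)$ over $X_{A_1}$ is a genuine object of $M_{dR,\alpha,X}(A_1,A_1^+)$ rather than merely a compatible system of matrices that happens to lift to $X_A$. This requires the transferred identities (Leibniz rule, integrability, cocycle compatibility on intersections of the trivialising cover) to hold already in $R_{A_1}$. As noted above, this amounts to the injectivity of $\Bpp{\alpha}(A_1)\hat\otimes_{\Bp{\alpha}}R\hookrightarrow \Bpp{\alpha}(A)\hat\otimes_{\Bp{\alpha}}R$, which is exactly the corollary in Section 4.1 --- it holds because $R$ is a direct summand of $\hat R_\infty$ under the decomposition supplied by Sen theory. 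Once this injectivity is in hand the descent is automatic, and smallness of both moduli stacks follows.
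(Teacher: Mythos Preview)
Your treatment of $M_{dR,\alpha,X}$ is essentially the paper's argument: descend the bundle via Lemma~\ref{smalldr}, then absorb the finitely many connection matrices into a countably generated perfectoid subalgebra using the corollary to Proposition~\ref{count}. One small point: the injectivity corollary in Section~4.1 is stated for a $v$-\emph{covering} $\mathrm{Spa}(B)\to\mathrm{Spa}(A)$, whereas you apply it to an inclusion $A_1\hookrightarrow A$ of a closed perfectoid subalgebra, which need not be a $v$-cover; the injectivity is still true by the same direct-summand argument, but your citation does not literally match. You also omit the check that \emph{isomorphisms} between two objects descend, which is needed for the groupoid colimit in the Fargues--Scholze criterion; this is routine, and the paper only mentions it in passing.

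The genuine gap is in the $M_{{\bdrp_{,\alpha}}\text{-}loc,X}$ paragraph. You propose to pass to a trivialising $v$-cover of $\bar X_A$ and then invoke ``an entirely analogous application of Lemma~\ref{smalldr}''. But Lemma~\ref{smalldr} concerns \emph{analytic} vector bundles, trivialised on a rational open cover whose defining inequalities involve only finitely many elements of $\Bpp{\alpha}(A)$; that is precisely why the cover descends. A trivialising $v$-cover $Y\to\bar X_A$ is an arbitrary affinoid perfectoid over $\bar X_A$, and there is no mechanism to descend $Y$ itself---or the transition matrices living in $\Bpp{\alpha}(\mathcal{O}(Y\times_{\bar X_A}Y))$---to a countably generated $A'\subset A$. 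The paper handles this by invoking the machinery of Theorem~\ref{vstack}: pull the $v$-bundle to the toric cover $\hat{\bar X}_{A,\infty}$ (which \emph{does} come from $X$ and hence descends to every $A'$), use $R^1\nu_*\mathrm{GL}_n(\Bpp{\alpha})=1$ to make it \'etale-locally free there, then apply Sen theory to decomplete it to a $t$-connection on some $R_{A,N}$. Only after this conversion to an \'etale datum does the Lemma~\ref{smalldr}-style countable-descent argument apply. Your sketch skips this conversion step, and without it the argument does not go through.
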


\begin{proof}
    A similar argument of Lemma \ref{smalldr} proves that 
    $$\M_{\dR,\alpha,r,X}(R,R^+)=\varinjlim_{i}\M_{\dR,\alpha,r,X}(R_i,R_i^+)$$ 
    where the colimit takes among all perfectoid topologically countably generated closed subalgebras, Hence, $\M_{\dR,\alpha,r,X}$ is small.

    To prove $\M_{{\mathbb{B}_{\alpha}},r,X}$ is small, we may still assume $X$ is affinoid which admits a toric chart. A similar argument of Theorem \ref{vstack} as well as the above paragraph implies that for any $\Bpp{\alpha}$-bundle over $v$-topology of $X_A$, there exists a perfectoid topologically countably generated closed subalgebra $A'\subseteq A$ and an \'etale covering $X'\to X_{A'}$ such that $V|_{X'\otimes_{\Bpp{\alpha}(A')}\Bpp{\alpha}(A)}$ can be defined over $X'$. Then use the same argument, we can enlarge $A'$ to permit the descent datum of $v$-bundles along $X'\otimes_{\Bpp{\alpha}(A')}\Bpp{\alpha}(A)\to X_A$ can be defined over $X_{A'}$, thus $V$ can be defined over $X_{A'}$. The same argument also proves that isomorphisms can be defined over some perfectoid topologically countably generated closed subalgebra.
\end{proof}

\bibliographystyle{alpha}
\bibliography{ref}

\end{document}